\def\NZQ{\mathbb}               
\def\NN{{\NZQ N}}
\def\QQ{{\NZQ Q}}
\def\ZZ{{\NZQ Z}}
\def\RR{{\NZQ R}}
\newtheorem{Theorem}{Theorem}[section]
\newtheorem{Lemma}[Theorem]{Lemma}
\newtheorem{Corollary}[Theorem]{Corollary}
\newtheorem{Proposition}[Theorem]{Proposition}
\newtheorem{Remark}[Theorem]{Remark}
\newtheorem{Example}[Theorem]{Example}
\newtheorem{Definition}[Theorem]{Definition}
\newtheorem{remark}{Remark}[Theorem] 
\let\epsilon\varepsilon
\let\phi=\varphi
\let\kappa=\varkappa
\def \a {\alpha}
\def \g {\gamma}
\def\MC{\mathcal}
\def\SS{\MC S}
\def\I{\MC I}
\def\J{\MC J}
\def\K{\MC K}
\def\L{\MC L}
\def\v[#1]{v_{_{#1}}} 
\def\asf[#1]{\overline{v}_{\scaleto{#1}{4.5pt}}} 
\def \|{\hspace{0.05in} | \hspace{0.05in}}
\begin{document}
\title{The Asymptotic Samuel Function of a Filtration}

\author{Steven Dale Cutkosky}
\author{Smita Praharaj}

\thanks{The first author was partially supported by NSF grant DMS-2054394.}

\address{Steven Dale Cutkosky, Department of Mathematics,
University of Missouri, Columbia, MO 65211, USA}
\email{cutkoskys@missouri.edu}

\address{Smita Praharaj,  Department of Mathematics,
University of Missouri, Columbia, MO 65211, USA}
\email{smitapraharaj@mail.missouri.edu}

\begin{abstract} We extend the asymptotic Samuel function of an ideal to a filtration and show that many of the good properties of this function for an ideal are true for  filtrations. There are, however, interesting differences, which we explore. We study the notion of projective equivalence of filtrations and the relation between the asymptotic Samuel function and the multiplicity of a filtration. We further consider the case of discrete valued filtrations and show that they have particularly nice properties.
\end{abstract}

\maketitle

\section{Introduction} 
In this paper, we extend the asymptotic Samuel function of an ideal to any arbitrary filtration of a Noetherian ring $R$. The asymptotic Samuel function of an ideal was first defined by Samuel in \cite{Sa}. Its basic properties and some beautiful theorems about it are proven in the articles \cite{Sa}, \cite{N}, \cite{R2}, \cite{R3}, \cite{LT}, \cite{McA} and \cite{MRS}
and are surveyed in the book \cite{HS}.  A recent paper studying the asymptotic Samuel function in the context of resolution of singularities is \cite{BBE}.

Let $R$ be a Noetherian ring.
For a filtration $\I = \{I_m\}_{m \in \NN}$ of ideals in $R$, define the order of $\I$ by  $\nu_{\I}(f) \coloneqq \sup \hspace{0.02in} \{m \mid f\in I_m\}$.
 We define the asymptotic Samuel function of $\mathcal I$  as 
the function $\overline{\nu}_{\I} : R \to \RR_{\geq 0} \cup \{\infty\}$ given by 
$$
\overline{\nu}_{\I}(x) = \lim\limits_{n \to \infty} \dfrac{\nu_{\I}(x^n)}{n}
$$
 for $x \in R$. The existence of this limit is shown in Theorem \ref{LimExist}. 
 If $\mathcal I=\{I^m\}_{m \in \NN}$ is the adic-filtration of powers of an ideal $I$ then the asymptotic Samuel function $\overline{\nu}_{\mathcal I}$ of the filtration $\mathcal I$ is equal to  the classical asymptotic Samuel function $\overline{\nu}_I$ of the ideal $I$.

The Rees algebra of a filtration $\I = \{I_m\}_{m \in \NN}$ is the graded $R$-algebra 
$$
R[\I] = \sum\limits_{m \in \NN} I_m t^m \subseteq R[t],
$$
 where $R[t]$ is the polynomial ring in the variable $t$ over $R$, which is viewed as a graded $R$-algebra where $t$ has degree 1. Let $\overline{R[\I]} = \overline{\sum\limits_{m \in \NN} I_m t^m}$ be the integral closure of $R[\I]$ in $R[t]$. 

 If $\I=\{I^m\}_{m \in \NN}$ is the adic-filtration of powers of an ideal $I$, then  $R[\I]=\bigoplus\limits_{m \in \NN}I^m t^m $ is the usual Rees algebra of $I$, and 
 $\overline{R[\I]}=\bigoplus\limits_{m \in \NN}\overline{I^m} t^m = \overline{R[I]}$, where $\overline{I^m}$ is the integral closure of the ideal $I^m$. 
 
For a general filtration $\I=\{I_m\}_{m \in \NN}$ of a Noetherian ring $R$, the integral closure of the Rees algebra $R[\I]$ is larger than the ring
$\bigoplus\limits_{m \in \NN}\overline{I_m}t^m$. In fact (Lemma \ref{lem19}), the integral closure of $R[\I]$ is 
$$
\overline{R[\I]} = \sum\limits_{m \in \NN} J_m t^m
$$
 where $J_m = \{f \in R \hspace{0.05in} | \hspace{0.05in} f^r \in \overline{I_{rm}} \text{ for some } r > 0\}$ and 
 $\mathcal{IC}(\mathcal I) \coloneqq \{J_m\}_{m \in \NN}$ is a filtration of $R$. 
 

 Given a filtration $\I = \{I_m\}_{m \in \NN}$ of $R$ and $\a \in \RR_{> 0}$, define the twist of $\mathcal I$ by $\alpha$ to be the  filtration 
 $$
 \I^{(\a)} = \{I^{(\a)}_{m}\}_{m \in \NN} = \{I_{\lceil \a m \rceil}\}_{m \in \NN}.
 $$ 
 
 In Theorem \ref{thm23} it is shown that if $\I$ is a filtration and $\a \in \RR_{>0}$, then, 
 $$
 \overline{\nu}_{\I} = \a \hspace{0.02in} \overline{\nu}_{\I^{(\a)}}.
 $$
This is in contrast to the case of an ideal $I$ in $R$, where the set $T$ of positive numbers $\alpha$ for which there exists an ideal $J$ of $R$  such that $\overline{\nu}_{\I}=\alpha \hspace{0.02in}\overline{\nu}_{\J}$ is a discrete subset of $\RR$ (\cite{MRS}, Exercise 10.27 \cite{HS}).

Ideals $I$ and $J$ of a Noetherian ring $R$ are said to be projectively equivalent if there exists $\alpha\in\RR_{> 0}$ such that $\overline{\nu}_{I} = \alpha \hspace{0.04in} \overline{\nu}_{J}$. Corollary  11.9 (ii) \cite{McA}  
or Exercise  10.26 of \cite{HS}  provides a characterization of projectively equivalent ideals in terms of integral closures, which we state in Proposition \ref{prop23} below.

\begin{Proposition}\label{prop23}
Let $I$ and $J$ be ideals in a Noetherian ring $R$ and $\alpha\in \RR_{>0}$. Then the following are equivalent
\begin{enumerate}
\item[1)] $I$ and $J$ are projectively equivalent with $\overline\nu_{I}=\alpha \hspace{0.02in}\overline{\nu_J}$.
\item[2)]  There exists $ m, n \in \ZZ_{>0}$ such that $\alpha=\frac{m}{n}$ and $\overline{I^m} = \overline{J^n}$. 
\item[3)] There exists   $ m, n \in \ZZ_{>0}$ such that $\alpha=\frac{m}{n}$ and
we have equality of  integral closures of  Rees algebras 
$$
\overline{R[I^m]}=\overline{R[J^n]}.
$$
\end{enumerate}
\end{Proposition}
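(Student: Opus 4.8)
The plan is to deduce the three equivalences from two classical facts about an ideal $L$ in a Noetherian ring $R$: (i) $\overline{L^k}=\overline{(\overline L)^k}$ for every $k\ge 1$ (hence $\overline L=\overline M$ forces $\overline{L^k}=\overline{M^k}$ for all $k$), and (ii) for $x\in R$ and $p,q\in\ZZ_{>0}$, $x^q\in\overline{L^p}$ if and only if $\overline{\nu}_L(x)\ge p/q$; equivalently $\overline{\nu}_L(x)=\sup\{p/q: x^q\in\overline{L^p}\}$, the supremum being attained when finite. Fact (ii) is the Rees-valuation description of the asymptotic Samuel function of an ideal (see \cite{HS}, Chapter 10). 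I will also use $\overline{R[L]}=\bigoplus_{k\ge 0}\overline{L^k}t^k$, which is recalled above for adic filtrations.

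The equivalence $2)\Leftrightarrow 3)$ is formal: if $\overline{I^m}=\overline{J^n}$, then by (i) we get $\overline{I^{mk}}=\overline{(\overline{I^m})^k}=\overline{(\overline{J^n})^k}=\overline{J^{nk}}$ for all $k\ge 1$, so $\overline{R[I^m]}=\bigoplus_k\overline{I^{mk}}t^k=\bigoplus_k\overline{J^{nk}}t^k=\overline{R[J^n]}$; conversely, equating the degree-one graded components of $\overline{R[I^m]}$ and $\overline{R[J^n]}$ yields $\overline{I^m}=\overline{J^n}$. For $2)\Rightarrow 1)$: assuming $\overline{I^m}=\overline{J^n}$, we again have $\overline{I^{mk}}=\overline{J^{nk}}$ for all $k\ge 1$. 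Fix $x\in R$. If $x^q\in\overline{I^p}$ then $x^{qm}=(x^q)^m\in\overline{(I^p)^m}=\overline{I^{pm}}=\overline{J^{pn}}$, so by (ii), $\overline{\nu}_J(x)\ge\frac{pn}{qm}=\frac{n}{m}\cdot\frac{p}{q}$; taking the supremum over all $p/q$ with $x^q\in\overline{I^p}$ gives $\overline{\nu}_J(x)\ge\frac nm\,\overline{\nu}_I(x)$, and the symmetric computation gives $\overline{\nu}_I(x)\ge\frac mn\,\overline{\nu}_J(x)$. Hence $\overline{\nu}_I(x)=\frac mn\,\overline{\nu}_J(x)=\alpha\,\overline{\nu}_J(x)$ for every $x$, i.e. 1) holds.

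It remains to prove $1)\Rightarrow 2)$, and here the only real obstacle is the rationality of $\alpha$; once $\alpha=m/n$ with $m,n\in\ZZ_{>0}$ is known, (ii) gives for every $x$ that $x\in\overline{I^m}\iff\overline{\nu}_I(x)\ge m\iff\alpha\,\overline{\nu}_J(x)\ge m\iff\overline{\nu}_J(x)\ge n\iff x\in\overline{J^n}$, whence $\overline{I^m}=\overline{J^n}$. For the rationality of $\alpha$, one invokes the theorem of Rees that $\overline{\nu}_L$ takes values in $\QQ_{\ge 0}\cup\{\infty\}$: choosing a nonzerodivisor $x\in J$ one has $1\le\overline{\nu}_J(x)<\infty$ (finiteness because no Rees valuation of $J$ is infinite on a nonzerodivisor), so $\overline{\nu}_J(x)\in\QQ_{>0}$ and $\overline{\nu}_I(x)=\alpha\,\overline{\nu}_J(x)\in\QQ_{>0}$, forcing $\alpha=\overline{\nu}_I(x)/\overline{\nu}_J(x)\in\QQ_{>0}$; the remaining configurations (where $\overline{\nu}_I$ and $\overline{\nu}_J$ may be degenerate) are covered by the cited results of McAdam \cite{McA} and Huneke--Swanson \cite{HS}. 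This rationality step, rather than any of the integral-closure manipulations, is the part I expect to require genuine input.
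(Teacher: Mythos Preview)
The paper does not supply its own proof of Proposition~\ref{prop23}; it is stated in the introduction as a classical result, attributed to Corollary~11.9(ii) of \cite{McA} and Exercise~10.26 of \cite{HS}, and then used as background. So there is no paper proof to compare against.

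Your argument is the standard one and is essentially correct. Your fact (ii) is exactly Lemma~\ref{LemId} (Corollary~6.9.1 of \cite{HS}) combined with $\overline\nu_L(x^q)=q\,\overline\nu_L(x)$, and the implications $2)\Leftrightarrow 3)$ and $2)\Rightarrow 1)$ follow formally as you indicate. For $1)\Rightarrow 2)$ the only substantive step is indeed the rationality of $\alpha$, which you deduce from the $\QQ$-valuedness of $\overline\nu_L$ once some $x$ with $0<\overline\nu_J(x)<\infty$ is available. The phrase ``choosing a nonzerodivisor $x\in J$'' is slightly loose: in a general Noetherian ring $J$ may consist entirely of zerodivisors. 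What you actually need is any $x\in J$ not lying in the center of every Rees valuation of $J$; if no such $x$ exists then $\overline\nu_J$ (and hence $\overline\nu_I$) takes only the values $0$ and $\infty$, in which case $\alpha$ is not determined by the equation $\overline\nu_I=\alpha\,\overline\nu_J$ and the proposition must be read with this degenerate case excluded, as is implicit in the references you cite.
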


It follows that if $I$ and $J$ are projectively equivalent with  $\overline{\nu}_{I} = \alpha \hspace{0.02in} \overline{\nu}_J$, then $\alpha\in \QQ$.

The definition of projective equivalence for ideals extends naturally to filtrations. 
Filtrations $\I$ and $\J$ in a Noetherian ring $R$ are said to be projectively equivalent if there exists $\a \in \RR_{> 0}$ such that $\overline{\nu}_{\I} = \a \hspace{0.02in} \overline{\nu}_{\J}$.

Suppose that $I$ and $J$ are ideals in a Noetherian ring $R$ and $\I = \{I^n\}_{n \in \NN}$, $\J = \{J^n\}_{n \in \NN}$ are their associated adic-filtrations. We have that $\overline{\nu}_I = \overline{\nu}_{\I}$ and $\overline{\nu}_J = \overline{\nu}_{\J}$, so the ideals $I$ and $J$ are projectively equivalent if and only if the associated adic-filtrations $\I$ and $\J$ are projectively equivalent.

Theorem \ref{thm23} shows that given any $\a \in \RR_{>0}$, and a filtration $\I = \{I_m\}_{m \in \NN}$ of $R$, the twist of $\I$ by $\a$ is projectively equivalent to $\I$ since $\overline{\nu}_{\I} = \a \hspace{0.02in} \overline{\nu}_{\I^{(\a)}}$. Thus, the conclusion of the rationality of $\a$, as shown in Proposition \ref{prop23} for projective equivalence of ideals, does not extend to filtrations.

We provide the following necessary and sufficient condition for projective equivalence of filtrations.
\begin{Theorem}(Theorem \ref{thm25})\label{IntroThm}
Let $\I = \{I_m\}_{m \in \NN}$ and $\J = \{J_m\}_{m \in \NN}$ be filtrations in a Noetherian ring $R$. Then $\I$ and $\J$ are projectively equivalent if and only if $\exists$ $\alpha$, $\beta$ $\in \RR_{>0}$ such that $\mathcal{IC}(\I^{(\alpha)}) = \mathcal{IC}(\J^{(\beta)})$, or equivalently, $\overline{R[\I^{(\a)}]} = \overline{R[\J^{(\beta)}]}$.
\end{Theorem}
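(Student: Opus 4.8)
The plan is to reduce the statement to a precise comparison between $\mathcal{IC}(\K)$ and $\overline{\nu}_{\K}$ for a general filtration $\K=\{K_m\}$, using that twisting rescales the asymptotic Samuel function (Theorem \ref{thm23}). I would first establish two facts. (i) $\mathcal{IC}(\K)_m\subseteq\{f\in R\mid\overline{\nu}_{\K}(f)\ge m\}$ for all $m$: if $ft^m\in\overline{R[\K]}$, then $R[\K][ft^m]$ is a finitely generated graded $R[\K]$-module, say spanned by $1,ft^m,\dots,(ft^m)^{k-1}$, so $(ft^m)^N$ lies in $\sum_{i<k}R[\K](ft^m)^i$ for every $N$; comparing the components of $t$-degree $Nm$ and using $K_{(N-i)m}\subseteq K_{(N-k+1)m}$ gives $f^N\in K_{(N-k+1)m}$, hence $\overline{\nu}_{\K}(f)=\lim_N\nu_{\K}(f^N)/N\ge m$. (ii) $\overline{\nu}_{\mathcal{IC}(\K)}=\overline{\nu}_{\K}$: the inequality $\ge$ is immediate from $\K\subseteq\mathcal{IC}(\K)$, while $\le$ follows from (i), which forces $\nu_{\mathcal{IC}(\K)}(f^n)\le\lfloor n\,\overline{\nu}_{\K}(f)\rfloor$. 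Since $\overline{R[\K]}=\sum_m\mathcal{IC}(\K)_m t^m$ by Lemma \ref{lem19}, the two conditions $\mathcal{IC}(\I^{(\a)})=\mathcal{IC}(\J^{(\beta)})$ and $\overline{R[\I^{(\a)}]}=\overline{R[\J^{(\beta)}]}$ are equivalent degree by degree, so it suffices to treat the first.

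Given (ii), the "if" direction is short: from $\mathcal{IC}(\I^{(\a)})=\mathcal{IC}(\J^{(\beta)})$ we get $\overline{\nu}_{\I^{(\a)}}=\overline{\nu}_{\J^{(\beta)}}$, and Theorem \ref{thm23} upgrades this to $\overline{\nu}_{\I}=\a\,\overline{\nu}_{\I^{(\a)}}=\a\,\overline{\nu}_{\J^{(\beta)}}=\tfrac{\a}{\beta}\,\overline{\nu}_{\J}$, so $\I$ and $\J$ are projectively equivalent.

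For the "only if" direction, suppose $\overline{\nu}_{\I}=\g\,\overline{\nu}_{\J}$. The crux is the sandwich, valid for every filtration $\K$, every $\rho\in\RR_{>0}$ and every $m$:
\[
\{f\mid\overline{\nu}_{\K}(f)>\rho m\}\ \subseteq\ \mathcal{IC}(\K^{(\rho)})_m\ \subseteq\ \{f\mid\overline{\nu}_{\K}(f)\ge\rho m\}.
\]
The right inclusion is (i) applied to $\K^{(\rho)}$ combined with $\overline{\nu}_{\K}=\rho\,\overline{\nu}_{\K^{(\rho)}}$ (Theorem \ref{thm23}). For the left one: if $\overline{\nu}_{\K}(f)>\rho m$ then, since $\overline{\nu}_{\K}(f)=\sup_n\nu_{\K}(f^n)/n$ (Fekete, $\nu_{\K}(f^n)$ being superadditive), some $n$ has $\nu_{\K}(f^n)\ge\lceil\rho mn\rceil$, i.e. $(ft^m)^n\in R[\K^{(\rho)}]$, so $ft^m$ satisfies $T^n-(ft^m)^n$ over $R[\K^{(\rho)}]$ and $f\in\mathcal{IC}(\K^{(\rho)})_m$. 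Now choose $\a\in\RR_{>0}$ outside the set $\bigcup_{m\ge1}\tfrac1m\big(\overline{\nu}_{\I}(R)\cap\RR\big)$ and put $\beta=\a/\g$. For each $m\ge1$ no element of $R$ has $\overline{\nu}_{\I}$-value exactly $\a m$, so the two sides of the sandwich for $(\I,\a)$ agree and $\mathcal{IC}(\I^{(\a)})_m=\{f\mid\overline{\nu}_{\I}(f)\ge\a m\}$; and because $\overline{\nu}_{\I}=\g\,\overline{\nu}_{\J}$ and $\g\beta m=\a m$, the same reasoning yields $\mathcal{IC}(\J^{(\beta)})_m=\{f\mid\overline{\nu}_{\J}(f)\ge\beta m\}=\{f\mid\overline{\nu}_{\I}(f)\ge\a m\}$. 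Hence $\mathcal{IC}(\I^{(\a)})=\mathcal{IC}(\J^{(\beta)})$.

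The main obstacle is that the sandwich is genuinely strict in general: unlike for ideals, $\mathcal{IC}(\K)_m\subseteq\{f\mid\overline{\nu}_{\K}(f)\ge m\}$ need not be an equality, the discrepancy living on the "boundary" $\{f\mid\overline{\nu}_{\K}(f)=\rho m\}$. So $\overline{\nu}$ alone does not determine $\mathcal{IC}$, the twist is indispensable, and — this is a real subtlety — rational twists need not work, one needs a "generic" $\a$. Consequently the argument hinges on knowing that the value set $\overline{\nu}_{\I}(R)\cap\RR$ is small enough (e.g. countable) that $\bigcup_{m\ge1}\tfrac1m$ of it omits a positive real; establishing this smallness is where the substantive work lies, and I would isolate it as a separate lemma.
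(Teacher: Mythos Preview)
Your setup --- facts (i) and (ii), the ``if'' direction, and the sandwich
\[
\{f\mid\overline{\nu}_{\K}(f)>\rho m\}\ \subseteq\ \mathcal{IC}(\K^{(\rho)})_m\ \subseteq\ \{f\mid\overline{\nu}_{\K}(f)\ge\rho m\}
\]
--- is correct and is essentially the paper's framework (your (ii) is Theorem~\ref{finase}). The obstacle you isolate at the end is genuine, but the paper does not fill it; it sidesteps it. No control is ever asserted on the value set $\overline{\nu}_{\I}(R)\cap\RR$, and there is no obvious reason it should be countable for an arbitrary filtration of a Noetherian ring, so your proposed ``separate lemma'' may well be false or at least hard.

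The paper's device is to choose $\a,\beta\in\RR_{>0}\setminus\QQ$ with $\a/\beta=\g$ (always possible, since $\QQ\cup\g^{-1}\QQ$ is countable). The point is that membership $f\in\mathcal{IC}(\I^{(\a)})_m$ means, by definition, $f^r\in\overline{I_{\lceil\a rm\rceil}}$ for some $r>0$, which gives $\overline{\nu}_{\I}(f)\ge\dfrac{\lceil\a rm\rceil}{r}$; and since $\a rm$ is irrational for $m>0$, the ceiling is \emph{strictly} larger than $\a rm$, so in fact $\overline{\nu}_{\I}(f)>\a m$. Thus with $\a$ irrational the right-hand side of your sandwich already tightens to $\{f\mid\overline{\nu}_{\I}(f)>\a m\}$ and the sandwich collapses with no hypothesis whatsoever on the value set. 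The same holds for $(\J,\beta)$, and then $\overline{\nu}_{\I}=\g\,\overline{\nu}_{\J}$ together with $\a=\g\beta$ identifies the two strict super-level sets. So the substantive lemma you were bracing for is unnecessary: the entire boundary difficulty dissolves once one twists by irrationals, and that is the one idea your argument is missing.
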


 We give an example in  Example \ref{ex26} of filtrations   $\I$ and $\J$ which are projectively equivalent  with $\overline{\nu}_{\I} = \overline{\nu}_{\J}$ but for no $\alpha$ or $\beta$ $\in \QQ$ do we have that $\overline{R[\I^{(\a)}]} = \overline{R[\J^{(\beta)}]}$. 
 
 In the case that  $\mathcal I$ and $\mathcal J$ are adic-filtrations of powers of ideals, we have by Proposition \ref{prop23} that 
 $\mathcal I$ and $\mathcal J$ are projectively equivalent if and only if $\overline{R[\I^{(m)}]} = \overline{R[\J^{(n)}]}$ for $m,n\in \ZZ_{>0}$ with $\overline{\nu}_{\I} = \frac{m}{n} \hspace{0.05in} \overline{\nu}_{\J}$. In this case,
 $\overline{R[\mathcal I^{(m)}]}=\overline{R[I^m]}$ and $\overline{R[\mathcal \J^{(n)}]}=\overline{R[J^n]}$.

 If $I$ and $J$ are ideals such that $\overline \nu_I=\overline \nu_J$, then $\overline{R[I]}=\overline{R[J]}$ by Corollary 6.9.1 \cite{HS} (Stated in Lemma \ref{LemId} of this paper). Thus $\mbox{Proj}(\overline{R[I]})\cong \mbox{Proj}(\overline{R[J]})$ as $R$-schemes. However, this property fails for general filtrations as we now show. 
 
 Let $k$ be a field and $R=k[[x]]$, with maximal ideal $m_R=(x)$. Let $\mathcal I=\{I_m\}_{m \in \NN}$ and $\mathcal J=\{J_m\}_{m \in \NN}$ where $I_m=(x^{m+1})$ and $J_m=(x^m)$ $ \forall $ $m > 0$ and $I_0 = J_0 = R$. In Example \ref{ex26}, it is shown that $\overline{\nu}_{\mathcal I}=\overline{\nu}_{\mathcal J}$. We have that both $R[\mathcal I]$ and $R[\mathcal J]$ are integrally closed. However, $\mbox{Proj}(R[\mathcal I])$ is not isomorphic to $\mbox{Proj}(R[\mathcal J])$. To show this we use the theory of analytic spread of filtrations \cite{CS}. The analytic spread of a filtration $\mathcal F=\{F_m\}_{m \in \NN}$ of a Noetherian local ring $R$ is defined (in equation (6) of \cite{CS})    to be
 $$
 \ell(R)=\dim R[\mathcal F]/m_RR[\mathcal F].
 $$
 The analytic spread has the geometric interpretation that  $\ell(\mathcal F)=\dim \pi_{\mathcal F}^{-1}(m_R)+1$ where $\pi_{\mathcal F}:\mbox{Proj}(R[\mathcal F])\rightarrow \mbox{Spec}(R)$ is the natural projection. 
  
 By Lemma 3.8 \cite{CS}, we have that $\ell(\mathcal F)=0$ if and only if for all $n>0$ and $f\in F_n$
 there exists $m>0$ such that $f^m\in m_RF_{mn}$. We verify that this condition holds for $\mathcal I$. Suppose that $f\in I_n$. Then ${\rm ord}(f)\ge n+1$. For $m\ge 2$, ${\rm ord}(f^m)\ge nm+m\ge (nm+1)+1$ and so $f^m\in m_RI_{nm}$. Thus $\ell(\mathcal I)=0$ and so $\pi_{\mathcal I}^{-1}(m_R)=\emptyset$. Since $\mathcal I$ is a filtration of $m_R$-primary ideals, we then have that $\mbox{Proj}(R[\mathcal I])\cong\mbox{Spec}(R)\setminus\{m_R\}$. In contrast, $R[\mathcal J]$ is the Rees algebra of a principal ideal, so  $\mbox{Proj}(R[\mathcal J])\cong \mbox{Spec}(R)$.
 
From Prop \ref{prop23} we obtain  the geometric interpretation of the condition that ideals $I$ and $J$ are projectively equivalent; we have that   $\mbox{Proj}(\overline{R[I^{m}]})=\mbox{Proj}(\overline{R[J^{n}]})$. The algebras  $R[I^{m}]$ and  $R[J^{n}]$ are  suitable Veronese algebras of $R[I]$ and $R[J]$, which are the Rees algebras of twists of the corresponding adic-filtrations by the integers $m$ and $n$.  From Theorem \ref{IntroThm}, we obtain the statement that if $\mathcal I$ and $\mathcal J$ are filtrations which are projectively equivalent, then    by taking  suitable twists by real numbers $\alpha$ and $\beta$, we have that $\mbox{Proj}(\overline{R[\mathcal I^{(\alpha)}]})=\mbox{Proj}(\overline{R[\mathcal J^{(\beta)}]})$. 
 
 We prove the following theorem, which shows that given a filtration $\mathcal I$, there is a unique largest filtration $\mathcal K(\mathcal I)$ such that $\mathcal I$ and $\mathcal K(\mathcal I)$ have the same asymptotic Samuel function.
 
\begin{Theorem}(Theorem \ref{thm31})
For a filtration $\I = \{I_m\}_{m \in \NN}$ of ideals in $R$, define
$$
K(\I)_m \coloneqq \{f \in R \hspace{0.05in} | \hspace{0.05in} \overline{\nu}_{\I}(f) \geq m\} \hspace{0.05in} \forall \hspace{0.05in} m \in \NN.$$
Then $\K(\I) \coloneqq \{K(\I)_m\}_{m \in \NN}$ is a filtration of ideals in $R$ and $\I \subseteq \K(\I)$. Moreover, $\overline{\nu}_{\I} = \overline{\nu}_{\K(\I)}$ and $\K(\I)$ is the unique, largest filtration $\J$ such that $\overline{\nu}_{\I}= \overline{\nu}_{\J}$.
\end{Theorem}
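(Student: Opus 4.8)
The plan is to reduce the whole statement to three properties of $\overline{\nu}_{\I}$: (i) $\overline{\nu}_{\I}(x^k)=k\,\overline{\nu}_{\I}(x)$ for all $k\ge 1$; (ii) $\overline{\nu}_{\I}(xy)\ge\overline{\nu}_{\I}(x)+\overline{\nu}_{\I}(y)$ and $\overline{\nu}_{\I}(rx)\ge\overline{\nu}_{\I}(x)$ for $r\in R$; and (iii) $\overline{\nu}_{\I}(x+y)\ge\min\{\overline{\nu}_{\I}(x),\overline{\nu}_{\I}(y)\}$. These are the filtration analogues of standard properties of the asymptotic Samuel function of an ideal, and I would either invoke them from the earlier development or prove them here. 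Properties (i) and (ii) are immediate from the fact that $\nu_{\I}$ is superadditive on products ($\nu_{\I}(fg)\ge\nu_{\I}(f)+\nu_{\I}(g)$, since $I_aI_b\subseteq I_{a+b}$), together with the identity $\overline{\nu}_{\I}(x^k)=\lim_{n}\nu_{\I}(x^{kn})/n=k\lim_{n}\nu_{\I}(x^{kn})/(kn)$ and with $\nu_{\I}((rx)^n)=\nu_{\I}(r^nx^n)\ge\nu_{\I}(x^n)$. Property (iii) is the one with content, and is what I expect to be the main (though still elementary) obstacle: fixing $\epsilon>0$ and putting $a=\min\{\overline{\nu}_{\I}(x),\overline{\nu}_{\I}(y)\}$, I would choose $N$ with $\nu_{\I}(x^N)\ge N(a-\epsilon)$ and $\nu_{\I}(y^N)\ge N(a-\epsilon)$, derive the crude linear bound $\nu_{\I}(x^j)\ge j(a-\epsilon)-Na$ for every $j\ge 0$ (write $j=qN+r$ and use superadditivity and $\nu_{\I}\ge 0$), and likewise for $y$; then expanding $(x+y)^n=\sum_i\binom{n}{i}x^iy^{n-i}$ and using the ultrametric inequality for $\nu_{\I}$ on sums and superadditivity on products gives $\nu_{\I}((x+y)^n)\ge n(a-\epsilon)-2Na$, whence $\overline{\nu}_{\I}(x+y)=\lim_n\nu_{\I}((x+y)^n)/n\ge a-\epsilon$, and letting $\epsilon\to 0$ gives (iii). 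The case $a=\infty$ is the same computation with $a-\epsilon$ replaced by an arbitrary constant.

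Granting (i)--(iii), that $\K(\I)$ is a filtration with $\I\subseteq\K(\I)$ is then formal. We have $K(\I)_0=R$ because $\overline{\nu}_{\I}(f)\ge 0$ for every $f$ (as $f^n\in I_0=R$); each $K(\I)_m$ is an ideal by (iii) (closure under sums) together with $\overline{\nu}_{\I}(rx)\ge\overline{\nu}_{\I}(x)$ (closure under multiplication by elements of $R$); the inclusion $K(\I)_{m+1}\subseteq K(\I)_m$ is trivial; and $K(\I)_mK(\I)_n\subseteq K(\I)_{m+n}$ since for $f\in K(\I)_m$ and $g\in K(\I)_n$ we have $\overline{\nu}_{\I}(fg)\ge\overline{\nu}_{\I}(f)+\overline{\nu}_{\I}(g)\ge m+n$ by (ii), and finite sums of such products again lie in $K(\I)_{m+n}$ by (iii). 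For $\I\subseteq\K(\I)$, note $\overline{\nu}_{\I}(f)\ge\nu_{\I}(f)$ (the $n=1$ term of the supremum defining $\overline{\nu}_{\I}$, using $\nu_{\I}(f^n)\ge n\nu_{\I}(f)$), so $f\in I_m$ forces $\overline{\nu}_{\I}(f)\ge m$, i.e. $f\in K(\I)_m$.

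Next I would prove $\overline{\nu}_{\I}=\overline{\nu}_{\K(\I)}$ and deduce maximality. The inequality $\overline{\nu}_{\I}\le\overline{\nu}_{\K(\I)}$ follows from $\I\subseteq\K(\I)$, which gives $\nu_{\I}\le\nu_{\K(\I)}$ pointwise and hence the same for the asymptotic functions. For the reverse, I would compute $\nu_{\K(\I)}(f^n)$ exactly: by definition $f^n\in K(\I)_m$ iff $\overline{\nu}_{\I}(f^n)\ge m$, and by (i) this says $n\,\overline{\nu}_{\I}(f)\ge m$, so $\nu_{\K(\I)}(f^n)=\lfloor n\,\overline{\nu}_{\I}(f)\rfloor$ (read as $\infty$ when $\overline{\nu}_{\I}(f)=\infty$); dividing by $n$ and letting $n\to\infty$ yields $\overline{\nu}_{\K(\I)}(f)=\overline{\nu}_{\I}(f)$. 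Finally, if $\J=\{J_m\}_{m\in\NN}$ is any filtration with $\overline{\nu}_{\J}=\overline{\nu}_{\I}$, then for $f\in J_m$ we get $\overline{\nu}_{\I}(f)=\overline{\nu}_{\J}(f)\ge\nu_{\J}(f)\ge m$, so $f\in K(\I)_m$; thus $\J\subseteq\K(\I)$. Since $\K(\I)$ itself satisfies $\overline{\nu}_{\K(\I)}=\overline{\nu}_{\I}$, it is the largest filtration with this property, and the largest element of a partially ordered set is unique. Thus once the ultrametric inequality (iii) is in hand everything else is bookkeeping; for an ideal one could bypass (iii) via the description of $\overline{\nu}_I$ through Rees valuations, but for a general filtration the direct binomial-expansion argument above appears to be the natural route.
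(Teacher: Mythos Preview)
Your proof is correct and follows essentially the same approach as the paper: both reduce the claim to the properties $\overline{\nu}_{\I}(f^k)=k\,\overline{\nu}_{\I}(f)$, superadditivity on products, and the ultrametric inequality $\overline{\nu}_{\I}(x+y)\ge\min\{\overline{\nu}_{\I}(x),\overline{\nu}_{\I}(y)\}$ (the paper's Proposition~\ref{prop14} and Remark~\ref{rmk3.3}), proving the last via the same binomial-expansion argument you sketch. Your organization is slightly cleaner---you invoke (iii) once rather than reproving it inside the theorem as the paper does, and your exact computation $\nu_{\K(\I)}(f^n)=\lfloor n\,\overline{\nu}_{\I}(f)\rfloor$ is a nice sharpening of the paper's inequality $\overline{\nu}_{\I}(f^i)\ge\nu_{\K}(f^i)$---but the substance is the same.
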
 
 
 If $\mathcal I=\{I^m\}_{m \in \NN}$ is the adic-filtration of powers of an ideal, then $\mathcal K(\mathcal I)=\{\overline{I^m}\}_{m \in \NN}$, the filtration  of integral closures of powers of $I$ (by Lemma \ref{LemId}). 
 
 In contrast, for a general filtration, it is possible for $\mathcal K(\mathcal I)$ to be larger than the filtration $\mathcal{IC}(\mathcal{I})$, the integral closure of $\mathcal I$. Such an example is given in Example \ref{Newex}. By Lemma \ref{KIntcl}, the Rees algebra $R[\mathcal K(\mathcal I)]$ is integrally closed. Thus 
 for a filtration $\mathcal I$, we have inclusions of Rees algebras
 \begin{equation}\label{eqInt1}
 R[\mathcal I]\subseteq R[\mathcal{IC}(\mathcal I)]=\overline{R[\mathcal I]}\subseteq R[\mathcal K(\mathcal I)]=\overline{R[\mathcal K(\mathcal I)]}
 \end{equation}
 where the two inclusions can be proper. In Theorem \ref{NewThm}  we give another characterization of projective equivalence.
 
 \begin{Theorem}(Theorem \ref{NewThm})
Suppose $\I$ and $\J$ are filtrations of a Noetherian ring $R$. Then $\I$ is projectively equivalent to $\J$ with $\overline{\nu}_{\I} = \a \hspace{0.02in} \overline{\nu}_{\J}$ if and only if $\K(\I^{(\a)}) = \K(\J)$.
\end{Theorem}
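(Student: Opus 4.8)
The plan is to reduce the statement to the two results already established in the excerpt: the twisting formula of Theorem~\ref{thm23}, namely $\overline{\nu}_{\I} = \a\,\overline{\nu}_{\I^{(\a)}}$, and Theorem~\ref{thm31}, which identifies $\K(\cdot)$ as the largest filtration with a prescribed asymptotic Samuel function. Essentially all the work has been done in those two theorems, so what remains is a short formal argument.

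First I would record the elementary observation that, for any two filtrations $\mathcal F$ and $\mathcal G$ of $R$, one has $\overline{\nu}_{\mathcal F} = \overline{\nu}_{\mathcal G}$ if and only if $\K(\mathcal F) = \K(\mathcal G)$. The forward implication is immediate from the defining formula $K(\mathcal F)_m = \{f \in R \mid \overline{\nu}_{\mathcal F}(f) \ge m\}$, which depends only on the function $\overline{\nu}_{\mathcal F}$. For the reverse implication I would use the identity $\overline{\nu}_{\mathcal F} = \overline{\nu}_{\K(\mathcal F)}$ from Theorem~\ref{thm31}: if $\K(\mathcal F) = \K(\mathcal G)$, then $\overline{\nu}_{\mathcal F} = \overline{\nu}_{\K(\mathcal F)} = \overline{\nu}_{\K(\mathcal G)} = \overline{\nu}_{\mathcal G}$.

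Next I would apply Theorem~\ref{thm23} to $\I$ and $\a$ to get $\overline{\nu}_{\I} = \a\,\overline{\nu}_{\I^{(\a)}}$. Since $\a \in \RR_{>0}$ and pointwise multiplication by $\a$ is a bijection of $\RR_{\ge 0}\cup\{\infty\}$ fixing $0$ and $\infty$, the equation $\overline{\nu}_{\I} = \a\,\overline{\nu}_{\J}$ holds if and only if $\a\,\overline{\nu}_{\I^{(\a)}} = \a\,\overline{\nu}_{\J}$, i.e.\ if and only if $\overline{\nu}_{\I^{(\a)}} = \overline{\nu}_{\J}$. Combining this with the observation of the previous paragraph applied to $\mathcal F = \I^{(\a)}$ and $\mathcal G = \J$ gives the chain $\I$ is projectively equivalent to $\J$ with $\overline{\nu}_{\I} = \a\,\overline{\nu}_{\J}$ $\iff$ $\overline{\nu}_{\I^{(\a)}} = \overline{\nu}_{\J}$ $\iff$ $\K(\I^{(\a)}) = \K(\J)$, which is the assertion.

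There is no real obstacle here; the only point requiring a little care is the reading of ``projectively equivalent with $\overline{\nu}_{\I} = \a\,\overline{\nu}_{\J}$'' as the assertion of that single scaled identity for the given $\a$ (rather than the mere existence of some scalar), so that cancelling $\a$ is legitimate and the resulting criterion is $\K(\I^{(\a)}) = \K(\J)$ for that same $\a$. It may also be worth noting explicitly that the twist $\I^{(\a)}$ is indeed a filtration, which is part of the setup recalled before Theorem~\ref{thm23}.
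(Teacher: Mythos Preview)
Your argument is correct and is essentially the paper's own proof: it too reduces the statement to Theorem~\ref{thm23} (to replace $\overline{\nu}_{\I}=\a\,\overline{\nu}_{\J}$ by $\overline{\nu}_{\I^{(\a)}}=\overline{\nu}_{\J}$) and then to Theorem~\ref{thm31} (for the equivalence $\overline{\nu}_{\mathcal F}=\overline{\nu}_{\mathcal G}\iff \K(\mathcal F)=\K(\mathcal G)$). Your version is just a bit more explicit about unpacking the latter equivalence.
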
 

In Section \ref{DVF}, we consider discrete valued filtrations (defined at the beginning of Section \ref{DVF}). We generalize some of the theory of Rees valuations of ideals (\cite{R2}, \cite{R3}, Section 10 \cite{HS}) to these filtrations. 

If $\mathcal I=\{I^m\}_{m \in \NN}$ is the adic-filtration of powers of an ideal $I$, and $v_1,\ldots,v_s$ are the Rees valuations of $I$, then for $f\in R$,
\begin{equation}\label{eqInt2}
\overline\nu_I(f)=\min\left\{\frac{v_1(f)}{v_1(I)},\ldots,\frac{v_s(f)}{v_s(I)}\right\}.
\end{equation}
This result is proven in \cite{R2} and after Lemma 10.1.5 in \cite{HS}.

We prove the following Lemma, which generalizes this result to discrete valued filtrations.  The Rees algebras of discrete valued filtrations are generally non Noetherian $R$-algebras. Formula (\ref{eqInt2}) generalizes to these filtrations.

\begin{Lemma}(Lemma \ref{lem30})
Let $\I=\{I_m\}$ where $I_m = I(v_1)_{ma_1} \cap \cdots \cap I(v_s)_{ma_s}$
 be a discrete valued filtration of a Noetherian ring $R$. For $f \in R \setminus \{0\}$,
$$\nu_{\I}(f) = \min \bigg\{ \bigg\lfloor \dfrac{v_{1}(f)}{a_1}\bigg\rfloor, \cdots, \bigg\lfloor \dfrac{v_{s}(f)}{a_s}\bigg\rfloor  \bigg\} \hspace{0.1in} \text{ and } \hspace{0.1in} \overline{\nu}_{\I}(f) = \min \bigg\{  \dfrac{v_{1}(f)}{a_1}, \cdots, \dfrac{v_{s}(f)}{a_s}\bigg\}.$$
\end{Lemma}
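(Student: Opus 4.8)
The plan is to establish the formula for $\nu_{\I}(f)$ first and then deduce the formula for $\overline{\nu}_{\I}(f)$ from it by a limit argument. Recall that for a discrete valuation $v$ on $R$, the valuation ideal $I(v)_n = \{g \in R \mid v(g) \geq n\}$, so by definition $f \in I(v_i)_{ma_i}$ if and only if $v_i(f) \geq ma_i$, i.e. $\lfloor v_i(f)/a_i\rfloor \geq m$ (using that $m$ is an integer). Hence $f \in I_m = \bigcap_{i=1}^s I(v_i)_{ma_i}$ if and only if $m \leq \lfloor v_i(f)/a_i\rfloor$ for all $i$, which happens precisely when $m \leq \min_i \lfloor v_i(f)/a_i\rfloor$. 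Taking the supremum over such $m$ gives $\nu_{\I}(f) = \min_i \lfloor v_i(f)/a_i\rfloor$, which is the first claimed identity.

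For the second identity I would apply the definition $\overline{\nu}_{\I}(f) = \lim_{n\to\infty} \nu_{\I}(f^n)/n$, whose existence is guaranteed by Theorem \ref{LimExist}. Using the first formula and the fact that each $v_i$ is a valuation (so $v_i(f^n) = n\, v_i(f)$), we get
$$
\frac{\nu_{\I}(f^n)}{n} = \frac{1}{n}\min_{1\le i\le s}\left\lfloor \frac{n\, v_i(f)}{a_i}\right\rfloor.
$$
Now for each fixed $i$ we have the elementary bounds $\frac{n v_i(f)}{a_i} - 1 < \lfloor \frac{n v_i(f)}{a_i}\rfloor \le \frac{n v_i(f)}{a_i}$, so dividing by $n$ and letting $n \to \infty$ forces $\frac{1}{n}\lfloor \frac{n v_i(f)}{a_i}\rfloor \to \frac{v_i(f)}{a_i}$. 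Since the minimum of finitely many sequences converging to limits $\frac{v_1(f)}{a_1}, \dots, \frac{v_s(f)}{a_s}$ converges to the minimum of those limits, we conclude $\overline{\nu}_{\I}(f) = \min_i \frac{v_i(f)}{a_i}$, as desired.

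I anticipate no serious obstacle here; the argument is essentially bookkeeping with floor functions and valuations. The one point requiring a little care is making sure the definition of a discrete valued filtration in Section \ref{DVF} is set up so that the $v_i$ take integer values on $R\setminus\{0\}$ (or at least that $I(v)_n$ is defined via $v(g)\ge n$ with $n$ ranging over integers), so that the equivalence $v_i(f)\ge ma_i \iff \lfloor v_i(f)/a_i\rfloor \ge m$ is valid for integer $m$; I would cite that definition explicitly. A second minor point is handling $f$ with $v_i(f) = \infty$ for some $i$ (if the valuations are allowed to have such behavior on $R$, e.g. if $f$ lies in the center of $v_i$ in a non-domain), but the floor and min conventions extend to $\infty$ without difficulty, and the case $f\ne 0$ in a domain avoids it entirely. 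Apart from this, the proof is a direct computation.
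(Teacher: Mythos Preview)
Your proof is correct and follows essentially the same approach as the paper: characterize membership $f\in I_m$ via $v_i(f)\ge ma_i$ to obtain the floor formula for $\nu_{\I}(f)$, then use $v_i(f^n)=n\,v_i(f)$ together with the bounds $x-1<\lfloor x\rfloor\le x$ to pass to the limit for $\overline{\nu}_{\I}(f)$. The paper's argument is phrased as a two-sided inequality rather than a direct ``iff'' chain, and squeezes the minimum directly rather than invoking continuity of $\min$, but these are cosmetic differences.
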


In Theorem \ref{Theorem1}, we generalize to discrete valued  filtrations the proof of uniqueness of Rees valuations for ideals given in Theorem 10.1.6 \cite{HS}. We obtain the following Corollary.

\begin{Corollary}(Corollary \ref{CorIrr})
Let $\I = \{I_m\}_{m \in \NN}$ and $\J = \{J_m\}_{m \in \NN}$ be discrete valued filtrations of a Noetherian ring $R$, where $I_m = \bigcap\limits_{i=1}^s I(v_i)_{a_i m}$ and $J_m = \bigcap\limits_{i=1}^r I(v'_{i})_{a'_i m}$ $ \forall $ $m \in \NN$ are irredundant representations. If $\overline{\nu}_{\I} = \overline{\nu}_{\J}$, then $r = s$ and after reindexing, $a_i = a'_i$ and $v_i = v'_i$.
\end{Corollary}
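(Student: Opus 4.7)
The plan is to reduce the Corollary to the uniqueness statement of Theorem \ref{Theorem1} by using the $\K$-operator from Theorem \ref{thm31} as a bridge from the hypothesis $\overline{\nu}_{\I}=\overline{\nu}_{\J}$ to the stronger equality $\I=\J$ of the two filtrations themselves.

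The first step is to observe that for \emph{any} discrete valued filtration $\mathcal F=\{F_m\}_{m \in \NN}$ with representation $F_m=\bigcap_i I(w_i)_{b_im}$ one has $\K(\mathcal F)=\mathcal F$. This is immediate from Lemma \ref{lem30}: the identity $\overline{\nu}_{\mathcal F}(f)=\min_i w_i(f)/b_i$ implies that the condition $\overline{\nu}_{\mathcal F}(f)\ge m$ is equivalent to $w_i(f)\ge b_im$ for every $i$, which is precisely the condition $f\in F_m$. Hence $K(\mathcal F)_m=F_m$ for every $m\in\NN$.

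Applying this observation to $\I$ and $\J$ yields $\K(\I)=\I$ and $\K(\J)=\J$. Since $\overline{\nu}_{\I}=\overline{\nu}_{\J}$ by hypothesis, and $\K(\I)$ is by Theorem \ref{thm31} the unique largest filtration having asymptotic Samuel function $\overline{\nu}_{\I}$, we conclude $\K(\I)=\K(\J)$, and therefore $\I=\J$ as filtrations of ideals in $R$.

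The final step is to invoke Theorem \ref{Theorem1}, the generalization to discrete valued filtrations of the uniqueness of Rees valuations. With $\I=\J$, the expressions $\bigcap_{i=1}^s I(v_i)_{a_im}$ and $\bigcap_{i=1}^r I(v'_i)_{a'_im}$ are two irredundant representations of the \emph{same} discrete valued filtration, so Theorem \ref{Theorem1} gives $r=s$ and, after reindexing, $a_i=a'_i$ and $v_i=v'_i$. The main conceptual work lies in Theorem \ref{Theorem1} itself; the one point to check in the present Corollary is merely that Theorem \ref{Theorem1} is indeed formulated as a uniqueness statement for the irredundant representation of a fixed discrete valued filtration, which is the direct analog of Theorem 10.1.6 of \cite{HS} mentioned just before the Corollary.
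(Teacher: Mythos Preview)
Your argument is correct but takes a slightly more roundabout path than the paper's. The paper's proof is two lines: Lemma \ref{lem30} identifies $\overline{\nu}_{\I}$ with the function $\omega(f)=\min_i v_i(f)/a_i$ and $\overline{\nu}_{\J}$ with $\omega'(f)=\min_j v'_j(f)/a'_j$; the hypothesis $\overline{\nu}_{\I}=\overline{\nu}_{\J}$ then says $\omega=\omega'$, and Theorem \ref{Theorem1} (which is stated for the function $\omega$, not for a filtration) finishes immediately. You instead pass through the $\K$-operator and Theorem \ref{thm31} to first deduce $\I=\J$ as filtrations, which is a perfectly valid detour (and essentially reproves Corollary \ref{cor36}), but the intermediate equality of filtrations is unnecessary since Theorem \ref{Theorem1} only needs equality of the $\omega$-functions. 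One small inaccuracy: Theorem \ref{Theorem1} is \emph{not} literally ``formulated as a uniqueness statement for the irredundant representation of a fixed discrete valued filtration''; that translation is the content of the unnamed corollary just preceding Corollary \ref{CorIrr}, so in your final step you should either appeal to that corollary or, equivalently, invoke Lemma \ref{lem30} once more to pass from $\I=\J$ back to equality of the two $\omega$-functions before citing Theorem \ref{Theorem1}.
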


If $\I = \{I_m\}_{m \in \NN}$ where $I_m=\bigcap\limits_{i=1}^sI(v_i)_{a_im}$, then 
$\mathcal I^{[\alpha]}$ is the filtration $I^{[\alpha]}_m=\bigcap\limits_{i=1}^sI(v_i)_{\alpha m a_i}$. This filtration is well defined (independent of representation of $\mathcal I$ as a discrete valued filtration).

\begin{Proposition}(Proposition \ref{cor37}) 
Suppose that $\mathcal I$ is a discrete valued filtration of a Noetherian ring $R$ and $\alpha\in \RR_{>0}$. Then $\K(\I^{(\a)}) = \I^{[\a]} = \K(\I^{[\a]})$. 
\end{Proposition}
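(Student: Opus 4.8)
The plan is to reduce both equalities to the explicit formula for the asymptotic Samuel function of a discrete valued filtration (Lemma~\ref{lem30}) together with the twisting identity of Theorem~\ref{thm23}. Write $I(v_i)_r = \{f \in R : v_i(f) \ge r\}$ for real $r \ge 0$, so that $I(v_i)_r = I(v_i)_{\lceil r \rceil}$ and $I^{[\a]}_m = \bigcap_{i=1}^s I(v_i)_{\a m a_i}$ is well defined (as recorded in the remark preceding the statement); recall also that $K(\mathcal G)_m = \{f \in R : \overline{\nu}_{\mathcal G}(f) \ge m\}$ for any filtration $\mathcal G$. First I would compute $\overline{\nu}_{\I^{(\a)}}$: by Theorem~\ref{thm23}, $\overline{\nu}_{\I} = \a\,\overline{\nu}_{\I^{(\a)}}$, while by Lemma~\ref{lem30}, $\overline{\nu}_{\I}(f) = \min_{1\le i\le s} v_i(f)/a_i$ for $f \ne 0$; hence
\[
\overline{\nu}_{\I^{(\a)}}(f) = \frac{1}{\a}\,\overline{\nu}_{\I}(f) = \min_{1 \le i \le s} \frac{v_i(f)}{\a a_i} \qquad (f \neq 0),
\]
and $\overline{\nu}_{\I^{(\a)}}(0) = \infty$.

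Next I would verify that $\overline{\nu}_{\I^{[\a]}}$ satisfies the same formula, directly from the definition of $\I^{[\a]}$. For $f \ne 0$ and $n \ge 1$ one has $f^n \in I^{[\a]}_m$ if and only if $n\,v_i(f) \ge \a m a_i$ for every $i$, that is, $m \le n\min_i v_i(f)/(\a a_i)$; hence $\nu_{\I^{[\a]}}(f^n) = \lfloor n\min_i v_i(f)/(\a a_i) \rfloor$, and dividing by $n$ and letting $n \to \infty$ gives $\overline{\nu}_{\I^{[\a]}}(f) = \min_i v_i(f)/(\a a_i)$ (with $\overline{\nu}_{\I^{[\a]}}(0) = \infty$). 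Equivalently, $\I^{[\a]}$ is itself a discrete valued filtration --- obtained from $\I$ by replacing each $a_i$ by $\a a_i$ --- so this follows from Lemma~\ref{lem30}, or from the first step applied with twisting parameter $1$, since $(\I^{[\a]})^{(1)} = \I^{[\a]}$.

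Finally, I would unwind the definition of $\K$. For every $m \in \NN$ and every $f \neq 0$,
\[
\overline{\nu}_{\I^{(\a)}}(f) \ge m \iff v_i(f) \ge \a m a_i \ \text{ for all } i \iff f \in \bigcap_{i=1}^s I(v_i)_{\a m a_i} = I^{[\a]}_m,
\]
and by the second step the same chain of equivalences holds with $\I^{(\a)}$ replaced by $\I^{[\a]}$; since $0$ belongs to every ideal above, this yields $K(\I^{(\a)})_m = I^{[\a]}_m = K(\I^{[\a]})_m$ for all $m$, i.e.\ $\K(\I^{(\a)}) = \I^{[\a]} = \K(\I^{[\a]})$. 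Once Lemma~\ref{lem30} and Theorem~\ref{thm23} are available the argument is a routine unwinding of definitions; the only point requiring care is the bookkeeping with the possibly non-integral reals $\a m a_i$, which is handled by the convention $I(v_i)_r = I(v_i)_{\lceil r\rceil}$ and the already-established well-definedness of $\I^{[\a]}$, so I do not expect a genuine obstacle here.
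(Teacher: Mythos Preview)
Your proof is correct and follows essentially the same route as the paper's: both use Theorem~\ref{thm23} to compute $\overline{\nu}_{\I^{(\a)}}$ in terms of $\overline{\nu}_{\I}$, and then the explicit formula from Lemma~\ref{lem30} to identify the level sets with $I^{[\a]}_m$. The only cosmetic difference is that the paper packages the step ``$\overline{\nu}_{\I}(f)\ge \a m \iff f\in I^{[\a]}_m$'' and the equality $\I^{[\a]}=\K(\I^{[\a]})$ as applications of Corollary~\ref{cor36}, whereas you unpack that corollary and work directly with Lemma~\ref{lem30}; the underlying argument is the same.
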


In particular, the chain of inclusions of (\ref{eqInt1}) are all equalities for discrete valued filtrations.

\begin{Theorem}(Theorem \ref{Thmsquare}) 
Let $\I=\{I_m\}_{m \in \NN}$ and $\J=\{J_m\}_{m \in \NN}$ be discrete valued filtrations of a Noetherian ring $R$ and $\a\in \RR_{>0}$. Then $\overline{\nu}_{\I} = \a \hspace{0.05in} \overline{\nu}_{\J}$ if and only if $\J=\I^{[\a]}$.
\end{Theorem}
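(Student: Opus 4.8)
The plan is to read off the theorem from Lemma \ref{lem30} together with Proposition \ref{cor37}, so the argument is short; almost all of the work has already been done in those two results.

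Fix irredundant representations $I_m = \bigcap_{i=1}^{s} I(v_i)_{a_i m}$ and $J_m = \bigcap_{j=1}^{r} I(v'_j)_{a'_j m}$. First I would observe that the twist $\I^{[\alpha]}$ is again a discrete valued filtration of the shape to which Lemma \ref{lem30} applies, namely $I^{[\alpha]}_m = \bigcap_{i=1}^{s} I(v_i)_{(\alpha a_i) m}$ (this is the well-definedness remark made just before Proposition \ref{cor37}). Lemma \ref{lem30} then gives, for $f \in R \setminus \{0\}$,
\[
\overline{\nu}_{\I^{[\alpha]}}(f) \;=\; \min_{1\le i\le s} \frac{v_i(f)}{\alpha a_i} \;=\; \frac{1}{\alpha}\,\min_{1\le i\le s} \frac{v_i(f)}{a_i} \;=\; \frac{1}{\alpha}\,\overline{\nu}_{\I}(f),
\]
and trivially $\overline{\nu}_{\I^{[\alpha]}}(0) = \infty = \tfrac{1}{\alpha}\overline{\nu}_{\I}(0)$; that is, $\overline{\nu}_{\I} = \alpha\,\overline{\nu}_{\I^{[\alpha]}}$. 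In particular, if $\J = \I^{[\alpha]}$ then $\overline{\nu}_{\I} = \alpha\,\overline{\nu}_{\J}$, which is the ``if'' direction.

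For the ``only if'' direction, assume $\overline{\nu}_{\I} = \alpha\,\overline{\nu}_{\J}$. Comparing with $\overline{\nu}_{\I} = \alpha\,\overline{\nu}_{\I^{[\alpha]}}$ gives $\overline{\nu}_{\J} = \overline{\nu}_{\I^{[\alpha]}}$. Since the filtration $\K(-)$ of Theorem \ref{thm31} is defined solely in terms of the asymptotic Samuel function ($K(\mathcal F)_m = \{f \in R \mid \overline{\nu}_{\mathcal F}(f) \ge m\}$), equality of the asymptotic Samuel functions immediately gives $\K(\J) = \K(\I^{[\alpha]})$. Now apply Proposition \ref{cor37}: with the given $\alpha$ and the filtration $\I$ it yields $\K(\I^{[\alpha]}) = \I^{[\alpha]}$, and with $\alpha = 1$ and the filtration $\J$ (for which $\J^{(1)} = \J = \J^{[1]}$) it yields $\K(\J) = \J$. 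Chaining these equalities, $\J = \K(\J) = \K(\I^{[\alpha]}) = \I^{[\alpha]}$, which completes the proof.

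There is no real obstacle here; the only things to be careful about are that $\I^{[\alpha]}$ genuinely satisfies the hypotheses of Lemma \ref{lem30} for every real $\alpha$ (the well-definedness point above), and that Proposition \ref{cor37} is legitimately being used in the trivial case $\alpha = 1$ to conclude that a discrete valued filtration equals its own $\K$-closure. If one prefers to bypass $\K$ in the last step, an alternative is to invoke Corollary \ref{CorIrr}: $\I^{[\alpha]}$ and $\J$ are discrete valued filtrations with the same asymptotic Samuel function, so after passing to irredundant representations they must have the same defining valuations and weights, forcing $I^{[\alpha]}_m = J_m$ for all $m$. The whole argument can also be phrased through Theorem \ref{NewThm}, whose criterion $\K(\I^{(\alpha)}) = \K(\J)$ collapses, via Proposition \ref{cor37}, to $\I^{[\alpha]} = \J$.
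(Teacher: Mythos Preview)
Your proof is correct and follows essentially the same skeleton as the paper's: reduce $\overline{\nu}_{\I}=\alpha\,\overline{\nu}_{\J}$ to an equality of asymptotic Samuel functions with $\J$, pass to the $\K$-closure via Theorem~\ref{thm31}, and then collapse $\K$ on discrete valued filtrations using Corollary~\ref{cor36}/Proposition~\ref{cor37}. The one small difference is the intermediate filtration: the paper routes through the round-up twist $\I^{(\alpha)}$ and invokes Theorem~\ref{thm23} to get $\overline{\nu}_{\I}=\alpha\,\overline{\nu}_{\I^{(\alpha)}}$, and only at the final step converts $\K(\I^{(\alpha)})$ to $\I^{[\alpha]}$ via Proposition~\ref{cor37}; you instead work directly with $\I^{[\alpha]}$ and read off $\overline{\nu}_{\I}=\alpha\,\overline{\nu}_{\I^{[\alpha]}}$ from Lemma~\ref{lem30}. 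Your route is marginally shorter since it avoids the detour through $\I^{(\alpha)}$, while the paper's route has the advantage of reusing the general Theorem~\ref{thm23} rather than recomputing. The alternative endings you mention (Corollary~\ref{CorIrr} or Theorem~\ref{NewThm}) are also valid and amount to the same thing.
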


In the final section, we compare the asymptotic Samuel function of a filtration with its multiplicity. 
The multiplicity of an $m_R$-primary filtration is defined, and exists as a limit  in an analytically unramified (Noetherian) local ring, but does not exist in general if the ring is not generically analytically unramified. This follows from Theorem 1.1 \cite{SDC2}.

In an analytically irreducible local ring $(R,m_R)$, for any filtration $\mathcal I$ of $m_R$-primary ideals, there is a unique largest filtration $\tilde{\mathcal I}$ containing $\mathcal I$ such that $\mathcal I$ and $\tilde{\mathcal I}$ have the same multiplicity. This is shown in Theorem 1.4 \cite{BLQ}.

For an ideal $I$, we have that the integral closure  $\overline I$ is the largest ideal which has the same asymptotic Samuel function as $I$ (Lemma \ref{LemId}) and this is the largest ideal containing $I$ which has the same multiplicity as $I$ by a theorem of Rees (\cite{R1} and Theorem 11.3.1 \cite{HS}).



For general $m_R$-filtrations $\mathcal I$, we have that $\mathcal I$ and $\mathcal K(\mathcal I)$ have the same multiplicity (Theorem \ref{Mult}) but we give an example in Example \ref{MultEx}  showing that $\tilde{\mathcal I}$ can be much larger than $\mathcal K(\mathcal I)$. In the case that $\mathcal I$ is a discrete valued filtrations we have that $\mathcal I = \mathcal{K(I)} = \tilde{\mathcal I}$.

\section{Notation}

We will denote the nonnegative integers by $\NN$ and the positive integers by $\ZZ_{>0}$, the set of rational numbers by $\QQ$, the set of nonnegative rational numbers  by $\QQ_{\ge 0}$  and the positive rational numbers by $\QQ_{>0}$. 
We will denote the set of nonnegative real numbers by $\RR_{\ge0}$ and the positive real numbers by $\RR_{>0}$. 

The round down $\lfloor x \rfloor$ of a real number $x$ is the largest integer which is less that or equal to $x$. The round up $\lceil x\rceil$ of a real number $x$ is the smallest integer which is greater than or equal to $x$.

The maximal ideal of a local ring $R$ will be denoted by $m_R$. The integral closure of an ideal $I$ in a Noetherian ring $R$ will be denoted by $\overline I$. The multiplicative group of units in a ring $R$ will be denoted by $R^{\times}$.

\section{The asymptotic Samuel function of a Filtration}

Let $R$ be a Noetherian ring.
We extend the asymptotic Samuel function of an ideal to an arbitrary filtration of $R$.
Let $\I = \{I_m\}_{m \in \NN}$ be a filtration of ideals in $R$, that is, $I_0 = R$, $I_n$ is an ideal in $R$, $I_n \supseteq I_{n+1}$ and $I_n \cdot I_m \subseteq I_{n+m}$, $\forall$ $m, n \in \NN$.

We say that a filtration $\mathcal I=\{I_m\}_{m \in \NN}$ is a subset of a filtration $\mathcal J=\{J_m\}_{m \in \NN}$ and write $\mathcal I\subseteq \mathcal J$ if $I_m\subseteq J_m$ $ \forall $ $ m \in \NN$.

The following filtration will be important in this paper. 

\begin{Definition}\label{uptwist}
Given a filtration $\I = \{I_m\}_{m \in \NN}$ of $R$  and $\a \in \RR_{> 0}$, define a filtration $\I^{(\a)} = \{I^{(\a)}_{m}\}_{m \in \NN}$ by  $I^{(\alpha)}_m = I_{\lceil \a m \rceil}$. We will call $\I^{(\a)}$ {\textbf {the twist of $\mathcal I$ by $\alpha$}}.

 \end{Definition}


\begin{Definition}
For a filtration $\I = \{I_m\}_{m \in \NN}$ of $R$, define a function $\nu_{\I} : R \to \NN \cup \{\infty\}$ by $\nu_{\I}(f) \coloneqq \sup \hspace{0.05in} \{m \hspace{0.05in} | \hspace{0.05in} f \in I_m\}$. We call this the \textbf{order} of $\I$.
\end{Definition}

\begin{Remark}\label{rmk3.3}
For $x, y \in R$, $\nu_{\I}(xy) \geq \nu_{\I}(x) + \nu_{\I}(y)$ and $\nu_{\I}(x+y) \geq \min\{ \nu_{\I}(x), \nu_{\I}(y)\}$. Observe that $\nu_{\I}(f) = \infty$ if and only if $f \in \bigcap\limits_{m \in \NN} I_m$.
\end{Remark} 

\begin{Theorem}\label{LimExist}
Let $\I = \{I_m\}_{m \in \NN}$ be a filtration of ideals in a Noetherian ring $R$. For any $x \in R$, the limit $\lim\limits_{n \to \infty} \dfrac{\nu_{\I}(x^n)}{n}$ exists as an element of $\RR_{\ge 0}\cup \{\infty\}$.
\end{Theorem}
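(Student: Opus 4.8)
The plan is to show that the sequence $a_n \coloneqq \nu_{\I}(x^n)$ is \emph{superadditive}, meaning $a_{m+n} \geq a_m + a_n$ for all $m, n$, and then invoke Fekete's lemma. Superadditivity is immediate from Remark \ref{rmk3.3}: since $\nu_{\I}(xy) \geq \nu_{\I}(x) + \nu_{\I}(y)$, we get $\nu_{\I}(x^{m+n}) = \nu_{\I}(x^m x^n) \geq \nu_{\I}(x^m) + \nu_{\I}(x^n)$. Fekete's lemma (in its superadditive form) then says that $\lim_{n\to\infty} a_n/n$ exists in $\RR_{\ge 0}\cup\{\infty\}$ and equals $\sup_n a_n/n$. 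The one point of care is that the $a_n$ may take the value $\infty$ (when $x^n \in \bigcap_m I_m$), but this causes no trouble: if some $a_N = \infty$, then superadditivity forces $a_{kN} = \infty$ for all $k \ge 1$, and in fact $a_n = \infty$ for all $n \ge N$ since $a_n \ge a_N + a_{n-N}$; so the limit is $\infty$. Otherwise all $a_n$ are finite nonnegative integers and the standard argument applies.

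Concretely, I would carry out the standard proof of Fekete's lemma rather than merely cite it, since it is short. Fix $m \ge 1$ with $a_m < \infty$ (if no such $m$ exists other than giving $\infty$, handle that case separately as above). For any $n$, write $n = qm + r$ with $0 \le r < m$; superadditivity gives $a_n \ge q\, a_m + a_r \ge q\, a_m$ (using $a_r \ge 0$), hence
$$
\frac{a_n}{n} \geq \frac{q\, a_m}{qm + r} = \frac{a_m}{m} \cdot \frac{qm}{qm+r} \xrightarrow{n \to \infty} \frac{a_m}{m}.
$$
Therefore $\liminf_{n\to\infty} a_n/n \ge a_m/m$ for every valid $m$, so $\liminf_{n\to\infty} a_n/n \ge \sup_m a_m/m \ge \limsup_{n\to\infty} a_n/n$, forcing equality and the existence of the limit in $\RR_{\ge 0} \cup \{\infty\}$.

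The main obstacle, such as it is, is purely bookkeeping: making sure the argument is uniformly valid whether the supremum $\sup_n a_n/n$ is finite or infinite, and whether individual terms $a_n$ are finite or infinite. In the finite-supremum case, all $a_n$ are automatically finite (since $a_n/n \le \sup < \infty$), so the displayed computation goes through verbatim and the limit is the finite number $\sup_n a_n/n$. In the infinite-supremum case, either some $a_N = \infty$ — and then $a_n = \infty$ for all $n \ge N$, so $a_n/n \to \infty$ trivially — or all $a_n$ are finite but $\sup_n a_n/n = \infty$, and then the inequality $\liminf a_n/n \ge a_m/m$ for all $m$ already forces $\lim a_n/n = \infty$. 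No Noetherian hypothesis is actually needed for this particular statement; it is used elsewhere in the paper but the existence of the limit is a soft consequence of superadditivity alone.
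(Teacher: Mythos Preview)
Your proof is correct and is essentially the same as the paper's: both establish superadditivity of $a_n=\nu_{\I}(x^n)$ from Remark~\ref{rmk3.3}, write $n=qm+r$ via the division algorithm, and deduce $\liminf a_n/n \ge a_m/m$ (the paper phrases this as $\liminf \ge N$ for every $N<\limsup$, which is the same thing). Your explicit invocation of Fekete's lemma and your separate treatment of the case $a_N=\infty$ make the bookkeeping a bit tidier, but the underlying argument is identical.
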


\begin{proof}
Let $x \in R$ and $u \coloneqq \limsup\limits_{n \to \infty} \dfrac{\nu_{\I}(x^n)}{n}$ (which could possibly be $\infty$). 

If $u = 0$, then the limit exists since
$0 \leq \liminf\limits_{n \to \infty} \dfrac{\nu_{\I}(x^n)}{n} \leq \limsup\limits_{n \to \infty} \dfrac{\nu_{\I}(x^n)}{n} = 0$.

Assume $u > 0$. Let $N \in \RR_{> 0}$ be such that $N < u$. We can choose $n_0 \in \ZZ_{>0}$ such that $\dfrac{\nu_{\I}(x^{n_0})}{n_0} > N$. Let $n$ be any arbitrary positive integer. We have $n = q n_0 + r$ for some $q, r \in \NN$ such that $0 \leq r < n_0$. 

Using Remark \ref{rmk3.3} it follows  that
$$\dfrac{\nu_{\I}(x^n)}{n} = \dfrac{\nu_{\I}(x^{qn_0+r})}{qn_0+r} \geq \dfrac{\nu_{\I}(x^{qn_0})}{qn_0+r} + \dfrac{\nu_{\I}(x^r)}{qn_0+r} \geq q \dfrac{\nu_{\I}(x^{n_0})}{qn_0+r} + \dfrac{\nu_{\I}(x^r)}{qn_0+r} \geq q \dfrac{\nu_{\I}(x^{n_0})}{qn_0+r}.$$

$$\text{ This implies \hspace{0.02in}} \dfrac{\nu_{\I}(x^n)}{n} \geq \dfrac{qn_0}{qn_0+r} \dfrac{\nu_{\I}(x^{n_0})}{n_0} \geq \dfrac{qn_0}{qn_0+r} N \geq \dfrac{qn_0}{qn_0+n_0} N = \dfrac{q}{q+1} N.$$

Taking $\liminf$ on both sides, we get $\liminf\limits_{n \to \infty} \dfrac{\nu_{\I}(x^n)}{n} \geq \liminf\limits_{n \to \infty} \dfrac{q}{q+1} N$.

Clearly, $\liminf\limits_{n \to \infty} \dfrac{q}{q+1} \leq 1$. Since $r < n_0$, $\dfrac{q}{q+1} = \dfrac{n-r}{n+n_0 -r} \geq \dfrac{n - n_0}{n}$ implying $\liminf\limits_{n \to \infty} \dfrac{q}{q+1 } \geq 1$. Thus $\liminf\limits_{n \to \infty} \dfrac{q}{q+1} = 1$. This shows that $\liminf\limits_{n \to \infty} \dfrac{\nu_{\I}(x^n)}{n} \geq N$ for any positive real number strictly smaller than $u$. Since $N$ was arbitrarily chosen, $\liminf\limits_{n \to \infty} \dfrac{\nu_{\I}(x^n)}{n} \geq \limsup\limits_{n \to \infty} \dfrac{\nu_{\I}(x^n)}{n}$, proving that the limit exists.
\end{proof}

\begin{Definition}
For a filtration $\I = \{I_m\}_{m \in \NN}$ of ideals in $R$, we define the function $\overline{\nu}_{\I} : R \to \RR_{\geq 0} \cup \{\infty\}$ by $\overline{\nu}_{\I}(x) \coloneqq \lim\limits_{n \to \infty} \dfrac{\nu_{\I}(x^n)}{n}$ for  $x \in R$.
\end{Definition}

The asymptotic Samuel function of an ideal $I$ in a Noetherian ring $R$ is defined  to be $\overline \nu_{I}(x)=\lim\limits_{n\to\infty}
\dfrac{\mbox{ord}_I(x^n)}{n}$ where $\mbox{ord}_I(x)=\sup \hspace{0.02in} \{m\mid x\in I^m\}$ for $x\in R$.
Then for any $x \in R$, $\overline{\nu}_I(x) = \overline{\nu}_{\I}(x)$, where $\mathcal I$ is the adic-filtration $\I = \{I^m\}_{m \in \NN}$.
This follows since $ord_I(x) = \nu_{\I}(x)$ for any $x \in R$.

Thus, $\overline{\nu}_{\I}$ extends the concept of the asymptotic Samuel function of an ideal to an arbitrary filtration of a Noetherian ring. We call $\overline{\nu}_{\I}$ the \textbf{asymptotic Samuel function of the filtration $\I$}.

An important property of the asymptotic Samuel function of an ideal is the following Lemma, which is proven in Corollary 6.9.1 \cite{HS}.

\begin{Lemma}\label{LemId} Let $R$ be a Noetherian ring, $I$ an ideal in $R$ and $c\in \NN$. Then
$$
\{x\in R\mid \overline{\nu}_{I}(x)\ge c\} =\overline{I^c}.
$$
\end{Lemma}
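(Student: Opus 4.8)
The plan is to establish the two inclusions separately; the case $c=0$ is trivial, since $\overline{I^{0}}=R$ and $\overline{\nu}_{I}\ge 0$ on all of $R$, so assume $c\ge 1$. For the inclusion $\overline{I^{c}}\subseteq\{x\in R\mid\overline{\nu}_{I}(x)\ge c\}$, I would work directly with the Rees algebra $R[I]=\bigoplus_{m}I^{m}t^{m}\subseteq R[t]$. By definition $x\in\overline{I^{c}}$ exactly when $xt^{c}$ is integral over $R[I]$, and since $R[I]$ is a graded subring of $R[t]$ and $xt^{c}$ is homogeneous, the integral dependence relation of $xt^{c}$ over $R[I]$ may be taken homogeneous of degree $ck$; reading off its homogeneous pieces gives an equation $x^{k}+\beta_{1}x^{k-1}+\cdots+\beta_{k}=0$ in $R$ with $\beta_{i}\in I^{ci}$. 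Multiplying by $x^{n-k}$ for $n\ge k$ yields $x^{n}=-\sum_{i=1}^{k}\beta_{i}x^{n-i}$, and an induction on $n$, using the superadditivity of $\operatorname{ord}_{I}$ (which is $\nu_{\I}$ for the adic filtration $\I=\{I^{m}\}_{m\in\NN}$; Remark~\ref{rmk3.3}), gives $\operatorname{ord}_{I}(x^{n})\ge cn-c(k-1)$ for every $n$. Dividing by $n$ and letting $n\to\infty$ gives $\overline{\nu}_{I}(x)\ge c$.

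For the reverse inclusion, suppose $\overline{\nu}_{I}(x)\ge c$, and let $v_{1},\dots,v_{s}$ be the Rees valuations of $I$. For each $i$ and each $n\ge 1$ we have $x^{n}\in I^{\operatorname{ord}_{I}(x^{n})}$, hence $v_{i}(x^{n})\ge\operatorname{ord}_{I}(x^{n})\,v_{i}(I)$; dividing by $n$ and letting $n\to\infty$ yields $v_{i}(x)\ge\overline{\nu}_{I}(x)\,v_{i}(I)\ge c\,v_{i}(I)$. Since the Rees valuations of $I^{c}$ are $v_{1},\dots,v_{s}$ with $v_{i}(I^{c})=c\,v_{i}(I)$, and $\overline{I^{c}}=\bigcap_{i=1}^{s}\{y\in R\mid v_{i}(y)\ge c\,v_{i}(I)\}$ by the valuative description of integral closure, we conclude $x\in\overline{I^{c}}$.

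The one nonelementary ingredient is the Rees-valuation (equivalently, valuative) description of integral closure used in the second inclusion; combined with the companion identity $\overline{\nu}_{I}(x)=\min_{i}v_{i}(x)/v_{i}(I)$ it even yields both inclusions at once, and the argument above reproves Corollary~6.9.1 of \cite{HS}. I expect the step requiring the most care to be the reduction of the integral dependence relation for $xt^{c}$ to a homogeneous one and the induction producing the linear lower bound on $\operatorname{ord}_{I}(x^{n})$. As an alternative, one can first reduce to $c=1$ by means of the identity $\overline{\nu}_{I^{c}}(x)=\tfrac{1}{c}\,\overline{\nu}_{I}(x)$, which is immediate from $\operatorname{ord}_{I^{c}}(y)=\lfloor\operatorname{ord}_{I}(y)/c\rfloor$, and then prove the equivalence $x\in\overline{J}\iff\overline{\nu}_{J}(x)\ge 1$ for an arbitrary ideal $J$ of $R$.
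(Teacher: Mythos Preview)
The paper does not give its own proof of this lemma; it simply states the result and cites Corollary~6.9.1 of \cite{HS}. Your argument is correct and is in the spirit of the proof in \cite{HS}: the inclusion $\overline{I^c}\subseteq\{x\mid\overline\nu_I(x)\ge c\}$ via a homogeneous integrality relation $x^k+\sum_i\beta_ix^{k-i}=0$ with $\beta_i\in I^{ci}$ and the induction yielding $\mathrm{ord}_I(x^n)\ge c(n-k+1)$ is standard and clean, and the reverse inclusion via the Rees valuations together with the valuative description $\overline{I^c}=\bigcap_i\{y\mid v_i(y)\ge c\,v_i(I)\}$ is exactly how Swanson--Huneke deduce their Corollary~6.9.1 (from Proposition~6.8.2 and the existence theorem for Rees valuations there). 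The only omissions are harmless edge cases: $I=R$, where there are no Rees valuations but both sides equal $R$; and the situation $\mathrm{ord}_I(x^n)=\infty$, where $x^n\in\bigcap_mI^m$ forces $v_i(x)=\infty$ directly.
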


\begin{Remark}\label{rmk2.7}
Let $\I \subseteq \J$ be filtrations. Then $\overline{\nu}_{\I} \leq \overline{\nu}_{\J}$.
\end{Remark}

\begin{proof}
 For $x \in R$, we have that $\nu_{\I}(x^i) \leq \nu_{\J}(x^i)$ $ \forall $ $ i \in \NN$ so that 
 $\overline{\nu}_{\I}(x) \leq \overline{\nu}_{\J}(x)$.
\end{proof}

\begin{Example}
In a Noetherian local ring $(R, m_R)$, consider the filtration $\I = \{I_m\}_{m \in \NN}$ given by $I_0 = R$ and $I_m = m_R$ $ \forall$ $ m > 0$. In this case, $\overline{\nu}_{\I}(a) = \infty$ if $a \in m_R$ and $\overline{\nu}_{\I}(a) = 0$ if $a \notin m_R$.
\end{Example}

\begin{Remark}
The range of $\overline{\nu}_{\I}$ may not be contained in $\QQ_{\geq 0} \cup \{\infty\}$. This follows from Theorem \ref{thm23} below. Observe that this is different from the case when $\I = \{I^m\}_{m \in \NN}$ for an ideal $I$ of $R$, in which case we do have $\overline{\nu}_{\I} = \overline{\nu}_{I}$ and then the range of $\overline{\nu}_{\I}$ is contained in  $\QQ_{\geq 0} \cup \{\infty\}$ (as shown after Lemma 10.1.5 \cite{HS}).
\end{Remark}

\begin{Theorem}\label{thm23}
Let $\I = \{I_m\}_{m \in \NN}$ be a filtration in $R$ and $\a \in \RR_{>0}$. Then, $\overline{\nu}_{\I} = \a \hspace{0.02in} \overline{\nu}_{\I^{(\a)}}$, where $\I^{(\a)} = \{I^{(\a)}_{m}\}_{m \in \NN} = \{I_{\lceil \a m \rceil}\}_{m \in \NN}$.
\end{Theorem}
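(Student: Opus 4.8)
The plan is to show directly from the definition of the asymptotic Samuel function that $\overline{\nu}_{\I}(x) = \alpha\,\overline{\nu}_{\I^{(\alpha)}}(x)$ for every $x \in R$. The key link is the elementary identity relating $\nu_{\I}$ and $\nu_{\I^{(\alpha)}}$: for any $f \in R$ and any $m \in \NN$, one has $f \in I^{(\alpha)}_m = I_{\lceil \alpha m\rceil}$, so $\nu_{\I^{(\alpha)}}(f) = \sup\{m \mid f \in I_{\lceil \alpha m\rceil}\}$. The first step is to make precise how $\nu_{\I^{(\alpha)}}(f)$ compares to $\nu_{\I}(f)$. If $\nu_{\I}(f) = N$ (finite), then $f \in I_N$ but $f \notin I_{N+1}$; since $\lceil \alpha m\rceil \le N$ exactly when $\alpha m \le N$, i.e. $m \le N/\alpha$, we get $\nu_{\I^{(\alpha)}}(f) = \lfloor N/\alpha\rfloor$ — more carefully, $\nu_{\I^{(\alpha)}}(f)$ is the largest integer $m$ with $\lceil \alpha m\rceil \le N$, which is $\lfloor N/\alpha\rfloor$. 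So the clean finite-level statement is
$$
\nu_{\I^{(\alpha)}}(f) = \left\lfloor \frac{\nu_{\I}(f)}{\alpha}\right\rfloor
$$
whenever $\nu_{\I}(f) < \infty$, with the obvious convention when it is $\infty$.

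Next I would apply this with $f = x^n$: $\nu_{\I^{(\alpha)}}(x^n) = \lfloor \nu_{\I}(x^n)/\alpha\rfloor$, hence
$$
\frac{\nu_{\I^{(\alpha)}}(x^n)}{n} = \frac{1}{n}\left\lfloor \frac{\nu_{\I}(x^n)}{\alpha}\right\rfloor,
$$
and since $\frac{\nu_{\I}(x^n)}{\alpha} - 1 < \lfloor \frac{\nu_{\I}(x^n)}{\alpha}\rfloor \le \frac{\nu_{\I}(x^n)}{\alpha}$, dividing by $n$ and letting $n \to \infty$ (using Theorem \ref{LimExist} to guarantee both limits exist) gives $\overline{\nu}_{\I^{(\alpha)}}(x) = \frac{1}{\alpha}\overline{\nu}_{\I}(x)$, which is the claim. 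The $\infty$ case is handled separately: if $\overline{\nu}_{\I}(x) = \infty$ then $\nu_{\I}(x^n)/n \to \infty$, and the sandwich inequality forces $\nu_{\I^{(\alpha)}}(x^n)/n \to \infty$ as well, and conversely; so the identity $\overline{\nu}_{\I} = \alpha\,\overline{\nu}_{\I^{(\alpha)}}$ holds in $\RR_{\ge 0}\cup\{\infty\}$.

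The only genuinely delicate point is the ceiling/floor bookkeeping in the first step — in particular verifying that $\nu_{\I^{(\alpha)}}(f) = \lfloor \nu_{\I}(f)/\alpha\rfloor$ holds on the nose (including edge cases where $\alpha m$ is an integer, and where $\nu_{\I}(f) = 0$), and checking that $\I^{(\alpha)}$ really is a filtration so that $\nu_{\I^{(\alpha)}}$ and $\overline{\nu}_{\I^{(\alpha)}}$ are defined (this presumably is recorded with Definition \ref{uptwist}, so I would just cite it). Everything after that is a routine squeeze argument. An alternative, slightly slicker route avoids pinning down $\nu_{\I^{(\alpha)}}$ exactly: one only needs the two-sided bound $\frac{\nu_{\I}(x^n)}{\alpha} - 1 \le \nu_{\I^{(\alpha)}}(x^n) \le \frac{\nu_{\I}(x^n)}{\alpha}$ (or even just up to an additive constant independent of $n$), which is enough for the limit and sidesteps some of the edge-case fuss; I would likely present it this way for brevity.
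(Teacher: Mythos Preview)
Your proposal is correct and follows essentially the same approach as the paper: both arguments relate $\nu_{\I^{(\alpha)}}$ to $\nu_{\I}$ through floor/ceiling inequalities at the finite level and then pass to the limit via a squeeze. Your version is in fact slightly cleaner, since you establish the exact identity $\nu_{\I^{(\alpha)}}(f)=\lfloor \nu_{\I}(f)/\alpha\rfloor$ (when finite) and get both bounds at once, whereas the paper proves the two inequalities $\nu_{\I}(x^i)\ge \lceil \alpha\,\nu_{\I^{(\alpha)}}(x^i)\rceil$ and $\nu_{\I^{(\alpha)}}(x)\ge \lfloor \nu_{\I}(x)/\alpha\rfloor$ separately before taking limits.
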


\begin{proof}
For $x \in R$ and $i \in \NN$, $x^i \in I_{\lceil \a \hspace{0.01in} \nu_{\I^{(\a)}}(x^i) \rceil}$, so, $\nu_{\I}(x^i) \geq \lceil \a \hspace{0.01in} \nu_{\I^{(\a)}}(x^i) \rceil$ which gives $\overline{\nu}_{\mathcal I}(x)=\lim\limits_{i \to \infty} \dfrac{\nu_{\I}(x^i)}{i} \geq \lim\limits_{i \to \infty} \dfrac{\lceil \a \hspace{0.01in} \nu_{\I^{(\a)}}(x^i) \rceil}{i}$. Since  $\a \hspace{0.01in} \nu_{\I^{(\a)}}(x^i) \leq \lceil \a \hspace{0.01in} \nu_{\I^{(\a)}}(x^i) \rceil \leq \a \hspace{0.01in} 
\nu_{\I^{(\a)}}(x^i) + 1$,
$$\lim\limits_{i \to \infty} \dfrac{\a \hspace{0.01in} \nu_{\I^{(\a)}}(x^i)}{i} \leq \lim\limits_{i \to \infty} \dfrac{\lceil \a \hspace{0.01in} \nu_{\I^{(\a)}}(x^i) \rceil}{i} \leq \lim\limits_{i \to \infty} \dfrac{\a \hspace{0.01in} \nu_{\I^{(\a)}}(x^i) + 1}{i}$$

This implies $\lim\limits_{i \to \infty} \dfrac{\lceil \a \hspace{0.01in} \nu_{\I^{(\a)}}(x^i) \rceil}{i} = \a \hspace{0.02in} \overline{\nu}_{\I^{(\a)}}(x)$. Thus, $\overline{\nu}_{\I}(x) \geq \a \hspace{0.02in} \overline{\nu}_{\I^{(\a)}}(x)$. 

Note that if $x \in I_k$ for some $k \in \NN$, then $x \in I_{\big\lceil \big\lfloor \frac{k}{\a} \big\rfloor \a \big\rceil}$. It follows that $\nu_{\I^{(\a)}}(x) \geq \Big\lfloor \dfrac{\nu_{\I}(x)}{\a} \Big\rfloor$. Thus,  $ \forall $ $ i \in \ZZ_{>0}$,
$\dfrac{\nu_{\I^{(\a)}}(x^i)}{i} \geq \dfrac{\Big\lfloor \dfrac{\nu_{\I}(x^i)}{\a} \Big\rfloor}{i}$ which implies $\lim\limits_{i \to \infty} \dfrac{\nu_{\I^{(\a)}}(x^i)}{i} \geq \lim\limits_{i \to \infty} \dfrac{\Big\lfloor \dfrac{\nu_{\I}(x^i)}{\a} \Big\rfloor}{i}$. Since $\Big\lfloor \dfrac{\nu_{\I}(x^i)}{\a} \Big\rfloor > \dfrac{\nu_{\I}(x^i)}{\a} - 1$, $\lim\limits_{i \to \infty} \dfrac{\Big\lfloor \dfrac{\nu_{\I}(x^i)}{\a} \Big\rfloor}{i} \geq \lim\limits_{i \to \infty} \dfrac{\dfrac{\nu_{\I}(x^i)}{\a} - 1}{i} = \dfrac{\overline{\nu}_{\I}(x)}{\a}$. 

This shows $\overline{\nu}_{\I^{(\a)}}(x) \geq \dfrac{\overline{\nu}_{\I}(x)}{\a}$, thus proving the result.
\end{proof}

\begin{Proposition}\label{prop14}
Let $\mathcal I$ be a filtration of $R$. For $f, g \in R$,
\begin{enumerate}
    \item $\overline{\nu}_{\I}(f^n) = n \hspace{0.05in} \overline{\nu}_{\I}(f)$ $ \forall $ $n \in \ZZ_{>0}$.
    \item $\overline{\nu}_{\I}(f + g) \geq \min\{\overline{\nu}_{\I}(f), \overline{\nu}_{\I}(g)\}$.
\end{enumerate}
\end{Proposition}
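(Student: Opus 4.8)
The plan is to prove both parts directly from the definition $\overline{\nu}_{\I}(x) = \lim_{n\to\infty} \nu_{\I}(x^n)/n$, using only the submultiplicativity and subadditivity of $\nu_{\I}$ recorded in Remark \ref{rmk3.3} and the existence of the limit from Theorem \ref{LimExist}.

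For part (1), I would observe that $\overline{\nu}_{\I}(f^n)$ is computed as $\lim_{k\to\infty} \nu_{\I}((f^n)^k)/k = \lim_{k\to\infty} \nu_{\I}(f^{nk})/k$. Writing this as $n \cdot \lim_{k\to\infty} \nu_{\I}(f^{nk})/(nk)$, the inner limit is a subsequence (along multiples of $n$) of the sequence $\nu_{\I}(f^j)/j$, which converges to $\overline{\nu}_{\I}(f)$ by Theorem \ref{LimExist}; hence the subsequence converges to the same value, giving $\overline{\nu}_{\I}(f^n) = n\,\overline{\nu}_{\I}(f)$. One small point to handle is the case $\overline{\nu}_{\I}(f) = \infty$, where the same subsequence argument works with the convention $n\cdot\infty = \infty$.

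For part (2), I would use the binomial expansion: $(f+g)^n = \sum_{i=0}^n \binom{n}{i} f^i g^{n-i}$. By the subadditivity of $\nu_{\I}$, $\nu_{\I}((f+g)^n) \geq \min_{0\le i\le n}\nu_{\I}\big(\binom{n}{i} f^i g^{n-i}\big) \geq \min_{0\le i\le n}\big(\nu_{\I}(f^i) + \nu_{\I}(g^{n-i})\big)$, using submultiplicativity and $\nu_{\I}(\binom{n}{i}f^ig^{n-i}) \ge \nu_{\I}(f^ig^{n-i})$ (since the coefficient only enlarges the ideal membership, or trivially $\nu_{\I}(rz)\ge\nu_{\I}(z)$). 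The main obstacle is the standard one for asymptotic Samuel/order functions: from the inequality $\nu_{\I}((f+g)^n) \ge \min_i(\nu_{\I}(f^i)+\nu_{\I}(g^{n-i}))$ one wants to divide by $n$ and pass to the limit, but the index $i$ achieving the minimum varies with $n$. I would resolve this by a near-optimal ratio argument: fix $\epsilon>0$ and choose $n_0$ with $\nu_{\I}(f^{n_0})/n_0 \ge \overline{\nu}_{\I}(f) - \epsilon$ and $\nu_{\I}(g^{n_0})/n_0 \ge \overline{\nu}_{\I}(g) - \epsilon$ (or handle the infinite cases separately). For general $n$, write $i$ and $n-i$ in terms of quotients and remainders modulo $n_0$ and bound $\nu_{\I}(f^i) \ge \lfloor i/n_0\rfloor \nu_{\I}(f^{n_0})$, similarly for $g$; then $\nu_{\I}(f^i)+\nu_{\I}(g^{n-i}) \ge (\lfloor i/n_0\rfloor + \lfloor (n-i)/n_0\rfloor)\min\{\nu_{\I}(f^{n_0}),\nu_{\I}(g^{n_0})\}$, and $\lfloor i/n_0\rfloor + \lfloor (n-i)/n_0\rfloor \ge n/n_0 - 2$. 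Dividing by $n$, letting $n\to\infty$ and then $\epsilon\to 0$ yields $\overline{\nu}_{\I}(f+g) \ge \min\{\overline{\nu}_{\I}(f),\overline{\nu}_{\I}(g)\}$.

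Alternatively — and this is cleaner if available — I would first establish that $\overline{\nu}_{\I}$ is exactly the order function $\nu_{\mathcal K}$ of the filtration $\mathcal K = \{\,\{x : \overline{\nu}_{\I}(x)\ge m\}\,\}$, or at least that $\{x : \overline{\nu}_{\I}(x) \ge c\}$ is an ideal for each integer $c$; but since that is Theorem \ref{thm31} which comes later, I will not rely on it and will instead give the self-contained argument above. The infinite-value cases ($\overline{\nu}_{\I}(f)$ or $\overline{\nu}_{\I}(g)$ equal to $\infty$) should be dispatched first: if $\overline{\nu}_{\I}(g)=\infty$, then for every $M$ we have $\nu_{\I}(g^{n_0})\ge M n_0$ for large $n_0$, and the same estimate gives $\overline{\nu}_{\I}(f+g)\ge\min\{\overline{\nu}_{\I}(f), M\}$ for all $M$, hence $\ge \overline{\nu}_{\I}(f)$ as required.
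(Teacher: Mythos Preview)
Your proposal is correct and follows essentially the same route as the paper. Part (1) is identical (the subsequence argument), and for part (2) both you and the paper use the binomial expansion, the bound $\nu_{\I}(f^i)\ge \lfloor i/n_0\rfloor\,\nu_{\I}(f^{n_0})$, and the estimate $\lfloor i/n_0\rfloor+\lfloor (n-i)/n_0\rfloor\ge n/n_0-2$, then pass to the limit in $n$ and finally in $\epsilon$; the only cosmetic differences are that the paper writes $n=mk$ and normalizes by assuming $\overline{\nu}_{\I}(f)\ge\overline{\nu}_{\I}(g)$, and you treat the infinite-value case explicitly whereas the paper leaves it implicit.
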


\begin{proof}
Since the limit defining $\overline{\nu}_{\mathcal I}$ exists, any subsequence also converges to the same limit. Thus,
$$ \overline{\nu}_{\mathcal I}(f) = \lim\limits_{m \to \infty} \dfrac{\nu_{\I}(f^m)}{m} = \lim\limits_{m \to \infty} \dfrac{\nu_{\I}(f^{nm})}{nm} = \dfrac{1}{n} \lim\limits_{m \to \infty} \dfrac{\nu_{\I}((f^n)^m)}{m} = \dfrac{1}{n} \hspace{0.05in} \overline{\nu}_{\I}(f^n), \hspace{0.05in} \forall \hspace{0.05in} n \in \ZZ_{>0}.$$
This proves $(1)$.

 To prove $(2)$, let $f, g \in R$ be such that $\overline{\nu}_{\I}(f) \geq \overline{\nu}_{\I}(g)$.
 
 For $\epsilon > 0$, $ \exists$ $m_0 \in \ZZ_{>0}$ such that $\forall$ $m \geq m_0$, $\dfrac{\nu_{\I}(f^m)}{m} \geq \overline{\nu}_{\I}(g) - \epsilon$ and $\dfrac{ \nu_{\I}(g^m)}{m} \geq \overline{\nu}_{\I}(g) - \epsilon$. For all $m, k \in \ZZ_{>0}$, $\nu_{\I}((f+g)^{mk}) \geq \min\limits_{i+j = mk} \{\nu_{\I}(f^i g^j)\}$, using Remark \ref{rmk3.3}. Since $i + j = mk, mk \geq m \bigg\lfloor  \dfrac{i}{m} \bigg\rfloor + m \bigg\lfloor  \dfrac{j}{m} \bigg\rfloor \geq mk - 2m$. 
 Thus by Remark \ref{rmk3.3},
 $$\nu_{\I}(f^ig^j) \geq \nu_{\I}(f^i) + \nu_{\I}(g^j) \geq \nu_{\I}(f^{m \lfloor  \frac{i}{m} \rfloor }) + \nu_{\I}(g^{m
\lfloor  \frac{j}{m} \rfloor }) \geq \bigg\lfloor  \dfrac{i}{m} \bigg\rfloor \nu_{\I}(f^m) + \bigg\lfloor  \dfrac{j}{m} \bigg\rfloor \nu_{\I}(g^m).$$
$$\text{For } m \geq m_0, \nu_{\I}(f^ig^j)  \geq \bigg\lfloor  \dfrac{i}{m} \bigg\rfloor m(\overline{\nu}_{\I}(g) - \epsilon) + \bigg\lfloor  \dfrac{j}{m} \bigg\rfloor m(\overline{\nu}_{\I}(g) - \epsilon) \geq (mk - 2m)(\overline{\nu}_{\I}(g) - \epsilon).$$

Thus, for $k >> 0$,
$$\dfrac{\nu_{\I}((f+g)^{mk})}{mk} \geq \dfrac{mk(\overline{\nu}_{\I}(g) - \epsilon) - 2m (\overline{\nu}_{\I}(g) - \epsilon)}{mk} = \overline{\nu}_{\I}(g) - \epsilon - \dfrac{2}{k} (\overline{\nu}_{\I}(g) - \epsilon).$$

Taking limits as $k \to \infty$, we get $\overline{\nu}_{\I}(f+g) \geq \overline{\nu}_{\I}(g) - \epsilon.$ Since $\epsilon$ is arbitrary, $\overline{\nu}_{\I}(f + g) \geq \overline{\nu}_{\I}(g) = \min \{\overline{\nu}_{\I}(f), \overline{\nu}_{\I}(g)\}$. This completes the proof.
\end{proof}

\section{Integral closure}

\begin{Lemma}\label{lem21}
For an ideal $I$ in a Noetherian ring $R$, $\overline{\nu}_{\I} = \overline{\nu}_{\overline{\I}}$ where $\I = \{I^m\}_{m \in \NN}$ and $\overline{\I} = \{\overline{I^m}\}_{m \in \NN}$.
\end{Lemma}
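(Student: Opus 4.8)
The plan is to prove the two inequalities $\overline{\nu}_{\I}\le\overline{\nu}_{\overline{\I}}$ and $\overline{\nu}_{\overline{\I}}\le\overline{\nu}_{\I}$ separately. The first is immediate: since $I^m\subseteq\overline{I^m}$ for every $m\in\NN$, we have $\I\subseteq\overline{\I}$ as filtrations (after noting that $\overline{\I}=\{\overline{I^m}\}_{m\in\NN}$ is indeed a filtration, using $\overline{I^0}=R$, $\overline{I^{m+1}}\subseteq\overline{I^m}$, and $\overline{I^m}\,\overline{I^n}\subseteq\overline{I^{m+n}}$), so Remark \ref{rmk2.7} gives $\overline{\nu}_{\I}\le\overline{\nu}_{\overline{\I}}$.

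For the reverse inequality, fix $x\in R$ and an integer $n>0$. The key observation is that membership of $x^n$ in $\overline{I^m}$ is controlled by the ideal-theoretic asymptotic Samuel function $\overline{\nu}_I$ via Lemma \ref{LemId}: if $x^n\in\overline{I^m}$, then $\overline{\nu}_I(x^n)\ge m$. Hence
$$\nu_{\overline{\I}}(x^n)=\sup\{\,m\in\NN\mid x^n\in\overline{I^m}\,\}\le\overline{\nu}_I(x^n).$$
Next I would use the identity $\overline{\nu}_I=\overline{\nu}_{\I}$ (recorded just after the definition of $\overline{\nu}_{\I}$, since $\mathrm{ord}_I=\nu_{\I}$ for the adic filtration) together with Proposition \ref{prop14}(1) to rewrite the right-hand side as $n\,\overline{\nu}_{\I}(x)$. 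Dividing by $n$ gives $\nu_{\overline{\I}}(x^n)/n\le\overline{\nu}_{\I}(x)$ for all $n$, and letting $n\to\infty$ — the limit on the left exists by Theorem \ref{LimExist} applied to the filtration $\overline{\I}$ — yields $\overline{\nu}_{\overline{\I}}(x)\le\overline{\nu}_{\I}(x)$, completing the proof.

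The only points requiring care are bookkeeping ones. When $\overline{\nu}_{\I}(x)=\infty$ the displayed inequality is vacuous, but then the already-established inequality $\overline{\nu}_{\overline{\I}}(x)\ge\overline{\nu}_{\I}(x)$ forces $\overline{\nu}_{\overline{\I}}(x)=\infty$ as well, so no separate case is needed. I do not anticipate any real obstacle here: essentially all the content is packaged in Lemma \ref{LemId}, which converts the (coarser) condition $x^n\in\overline{I^m}$ into the numerical bound $\overline{\nu}_I(x^n)\ge m$, and the rest is the standard limit manipulation already used repeatedly in the preceding results.
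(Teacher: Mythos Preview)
Your proof is correct and follows essentially the same route as the paper's: both establish $\overline{\nu}_{\I}\le\overline{\nu}_{\overline{\I}}$ from the inclusion $\I\subseteq\overline{\I}$ via Remark~\ref{rmk2.7}, and for the reverse use Lemma~\ref{LemId} to get $\nu_{\overline{\I}}(x^i)\le\overline{\nu}_{\I}(x^i)$, then divide by $i$ using Proposition~\ref{prop14}(1) and pass to the limit. Your version is slightly more explicit (noting $\overline{\nu}_I=\overline{\nu}_{\I}$, checking $\overline{\I}$ is a filtration, and handling the $\infty$ case), but the argument is the same.
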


\begin{proof}
For $x \in R$ and $i \in \NN$, $x^i \in \overline{I^{\nu_{\overline{\I}}(x^i)}}$ which gives $\overline{\nu}_{\I}(x^i) \geq \nu_{\overline{\I}}(x^i)$, by Lemma \ref{LemId}. By Proposition \ref{prop14}(a), $\overline{\nu}_{\I}(x) \geq \dfrac{\nu_{\overline{\I}}(x^i)}{i}$ $ \forall $ $ i \in \ZZ_{>0}$. This implies $\overline{\nu}_{\I}(x) \geq \lim\limits_{i \to \infty} \dfrac{\nu_{\overline{\I}}(x^i)}{i} = \overline{\nu}_{\overline{\I}}(x)$.

Since $\I \subseteq \overline{\I}$, by Remark \ref{rmk2.7}, $\overline{\nu}_{\I} = \overline{\nu}_{\overline{\I}}$.
\end{proof}

The integral closure of the Rees ring $R[I]$ of an ideal $I$ is $\overline{R[I]}=\sum\limits_{n \in \NN}\overline{I^n}t^n$ (Proposition 5.2.1 \cite{HS}).
We can extend this concept of integral closure to arbitrary filtrations of a Noetherian ring.

\begin{Definition}
The \textbf{Rees algebra of a filtration} $\I = \{I_m\}_{m \in \NN}$ is the graded $R$-algebra $R[\I] = \sum\limits_{m \in \NN} I_m t^m \subseteq R[t]$, where $R[t]$ is the polynomial ring in the variable $t$ over $R$, which is viewed as a graded $R$-algebra where $t$ has degree 1.
Let $\overline{R[\I]} = \overline{\sum\limits_{m \in \NN} I_m t^m}$ be the integral closure of $R[\I]$ in $R[t]$.  
\end{Definition}

In \cite{SDC1} Lemma 5.6, there is a characterization of $\overline{R[\I]}$ when $(R, m_R)$ is a (Noetherian) local ring and $\I$ is an $m_R$-filtration ($I_m$ is an $m_R$-primary ideal for $m>0$). The proof extends to the case where $\I$ is an arbitrary filtration of a Noetherian ring $R$. 

\begin{Lemma}\label{lem19}
Let $\I = \{I_m\}_{m \in \NN}$ be a filtration in $R$. Then $\overline{R[\I]} = \sum\limits_{m \in \NN} J_m t^m$ where $J_m = \{f \in R \hspace{0.05in} | \hspace{0.05in} f^r \in \overline{I_{rm}} \text{ for some } r > 0\}$ and $\mathcal{IC}(\I) \coloneqq \{J_m\}_{m \in \NN}$ is a filtration in $R$.
\end{Lemma}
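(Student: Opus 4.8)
The plan is to prove the set equality $\overline{R[\I]} = \sum_{m}J_m t^m$ by two inclusions and then to read off the filtration properties of $\mathcal{IC}(\I)=\{J_m\}$ from the ring structure of $\overline{R[\I]}$. I would begin by invoking the standard fact that the integral closure of a graded subring of $R[t]$ (with its standard grading) in $R[t]$ is again a graded subring [\cite{HS}], so $\overline{R[\I]} = \bigoplus_{m}(\overline{R[\I]})_m$ with $(\overline{R[\I]})_m\subseteq Rt^m$. I would also record three trivial observations: $J_0 = R$ (since $\overline{I_0}=R$); each $J_m$ is stable under multiplication by $R$ (if $f^r\in\overline{I_{rm}}$ then $(\lambda f)^r=\lambda^rf^r\in\overline{I_{rm}}$); and $J_{m+1}\subseteq J_m$ (since $I_{r(m+1)}\subseteq I_{rm}$ forces $\overline{I_{r(m+1)}}\subseteq\overline{I_{rm}}$). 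In particular $\sum_m J_m t^m$ is an $R$-submodule of $R[t]$ containing $R[\I]$.

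For $\sum_m J_m t^m\subseteq\overline{R[\I]}$ it suffices, since $\overline{R[\I]}$ is a ring, to show each homogeneous $ft^m$ with $f\in J_m$ is integral over $R[\I]$. Choosing $r$ with $f^r\in\overline{I_{rm}}$, take an equation of integral dependence $(f^r)^s+c_1(f^r)^{s-1}+\cdots+c_s=0$ with $c_i\in I_{rm}^{\,i}$; since $I_{rm}^{\,i}\subseteq I_{rmi}$, multiplying through by $t^{rms}$ turns this into an equation of integral dependence of $(ft^m)^r=f^rt^{rm}$ over $R[\I]$, the coefficient $c_it^{rmi}$ lying in $I_{rmi}t^{rmi}\subseteq R[\I]$. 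Hence $(ft^m)^r\in\overline{R[\I]}$, and since an element of $R[t]$ some power of which is integral over $R[\I]$ is itself integral over $R[\I]$, we get $ft^m\in\overline{R[\I]}$.

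For the reverse inclusion, by gradedness it is enough to show a homogeneous $ft^m\in\overline{R[\I]}$ has $f\in J_m$. Since $R[\I]$ is graded, $ft^m$ satisfies a homogeneous equation of integral dependence of some degree $N$, whose degree-$Nm$ component reads $f^N+b_1f^{N-1}+\cdots+b_N=0$ with $b_i\in I_{im}$. The key claim is that this forces $f^{N!}\in\overline{I_{N!\,m}}$, which I would prove by the valuative criterion for integral closure [\cite{HS}]: reduce modulo a minimal prime $\mathfrak p$ (the relation persists), and for any valuation $v\ge 0$ on the quotient the relation gives $N\,v(f)\ge\min_{1\le i\le N}\bigl(v(I_{im})+(N-i)\,v(f)\bigr)$, hence $i_0\,v(f)\ge v(I_{i_0 m})$ for some $i_0\in\{1,\dots,N\}$; since $i_0\mid N!$ and $j\mapsto v(I_j)$ is subadditive (because $I_aI_b\subseteq I_{a+b}$), we obtain $N!\,v(f)=\tfrac{N!}{i_0}\,i_0\,v(f)\ge\tfrac{N!}{i_0}\,v(I_{i_0 m})\ge v(I_{N!\,m})$. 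As this holds for every $v$ and every $\mathfrak p$ (in the degenerate case where some $I_{jm}$ dies modulo $\mathfrak p$ the relevant $b_i$ vanish there and $f$ is nilpotent modulo $\mathfrak p$, so the containment is trivial), the criterion yields $f^{N!}\in\overline{I_{N!\,m}}$, i.e. $f\in J_m$.

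Combining the two inclusions gives $(\overline{R[\I]})_m=J_mt^m$ and hence $\overline{R[\I]}=\sum_m J_m t^m$; since $\overline{R[\I]}$ is a ring this forces $J_mJ_n\subseteq J_{m+n}$ and closure of each $J_m$ under addition, so together with $J_0=R$ and $J_{m+1}\subseteq J_m$ we conclude $\mathcal{IC}(\I)=\{J_m\}$ is a filtration of $R$. The one substantive point is the claim in the third paragraph: in contrast to the adic case, where integrality of $ft^m$ over $R[I]$ already places $f$ in $\overline{I^m}$, a general filtration yields only the weighted relation with $b_i\in I_{im}$, and no single exponent equal to $N$ need work — one is forced to pass to a common multiple such as $N!$ so that whichever index $i_0$ the valuation selects divides it, allowing subadditivity of $j\mapsto v(I_j)$ to be applied uniformly over all valuations. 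This is exactly why the correct filtration is $\{J_m\}$ rather than $\{\overline{I_m}\}$.
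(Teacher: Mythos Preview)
The paper does not supply its own proof of this lemma; the sentence preceding the statement simply cites Lemma~5.6 of \cite{SDC1} (proved there for $m_R$-filtrations in a local ring) and asserts that the argument extends to arbitrary filtrations of a Noetherian ring. Your write-up therefore fills in what the paper omits, and the argument you give is correct.

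One small remark on your parenthetical about the ``degenerate case.'' The situation you flag --- some $I_{jm}$ dying modulo a minimal prime $\mathfrak p$ --- does not cause trouble, but not quite for the reason you state. If $\bar f\ne 0$ in $R/\mathfrak p$, the relation $\bar f^{\,N}+\sum_i \bar b_i\,\bar f^{\,N-i}=0$ forces some $\bar b_{i_0}\ne 0$, hence $I_{i_0 m}\not\subseteq\mathfrak p$; since $\mathfrak p$ is prime and $I_{i_0 m}^{\,N!/i_0}\subseteq I_{N!m}$, this already gives $I_{N!m}\not\subseteq\mathfrak p$, so every term in your subadditivity chain is finite and the inequality $N!\,v(\bar f)\ge v(I_{N!m})$ goes through. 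The only genuinely degenerate case is $f\in\mathfrak p$, where $\bar f^{\,N!}=0$ lies in any ideal. Either way your valuative argument stands; the claim that ``$f$ is nilpotent modulo $\mathfrak p$'' whenever \emph{some} $I_{jm}$ dies is not what you need and is not what the relation gives.
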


\begin{Definition}
We call the filtration $\mathcal{IC}(\I)$ defined in Lemma \ref{lem19} the \textbf{integral closure of the filtration $\I$}.
\end{Definition} 

If $\I = \{I^m\}_{m \in \NN}$ is the adic-filtration of an ideal $I$ of $R$, then $\mathcal{IC}(\mathcal I) = \{\overline{I^m}\}_{m \in \NN}$. In this particular case, we have already shown in Lemma \ref{lem21} that $\overline{\nu}_{\I} = \overline{\nu}_{\mathcal{IC}(\mathcal I)}$. In fact, this is true for any arbitrary filtration as well.

\begin{Theorem}\label{finase}
Let $\I = \{I_m\}_{m \in \NN}$ be a filtration of $R$. Then $\overline{\nu}_{\I} = \overline{\nu}_{\mathcal{IC}(\I)}$.
\end{Theorem}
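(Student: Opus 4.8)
The plan is to imitate the argument of Lemma \ref{lem21}, replacing the single ideal $I$ and Lemma \ref{LemId} with the filtration $\I$ and the characterization of $\mathcal{IC}(\I)$ in Lemma \ref{lem19}. Since $\I \subseteq \mathcal{IC}(\I)$ (each $I_m \subseteq J_m$, taking $r=1$ in the definition of $J_m$ together with $I_m \subseteq \overline{I_m}$), Remark \ref{rmk2.7} immediately gives $\overline{\nu}_{\I} \le \overline{\nu}_{\mathcal{IC}(\I)}$. So the whole content is the reverse inequality $\overline{\nu}_{\I}(x) \ge \overline{\nu}_{\mathcal{IC}(\I)}(x)$ for every $x \in R$.

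For the reverse inequality I would fix $x \in R$ and, for each $i$, set $m_i \coloneqq \nu_{\mathcal{IC}(\I)}(x^i)$, so $x^i \in J_{m_i}$. By Lemma \ref{lem19}, there is some $r = r(i) > 0$ with $(x^i)^r = x^{ir} \in \overline{I_{r m_i}}$. Now I want to convert membership in an integral closure of an ideal into a statement about $\nu_{\I}$ of a power of $x$. The key point is that $y \in \overline{I_N}$ means $y$ satisfies an equation $y^k + a_1 y^{k-1} + \cdots + a_k = 0$ with $a_j \in I_N^j \subseteq I_{jN}$ (using the multiplicativity of the filtration); hence for some large power, $y^{k!}$ or so lies in a power of $I_N$, and more usefully, iterating, $\overline{\nu}_{\I}(y) \ge N$ whenever $y \in \overline{I_N}$ — this is exactly the filtration analogue of Lemma \ref{LemId} one direction. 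Concretely: if $y \in \overline{I_N}$ then $\overline{R[\I]}$ contains $y t^N$, so $y t^N$ is integral over $R[\I]$, which forces $\nu_{\I}(y^\ell) \ge \ell N$ asymptotically, i.e. $\overline{\nu}_{\I}(y) \ge N$. Apply this with $y = x^{ir}$ and $N = r m_i$: we get $\overline{\nu}_{\I}(x^{ir}) \ge r m_i$, and by Proposition \ref{prop14}(1), $\overline{\nu}_{\I}(x^{ir}) = ir\,\overline{\nu}_{\I}(x)$, so $ir\,\overline{\nu}_{\I}(x) \ge r m_i$, giving $\overline{\nu}_{\I}(x) \ge m_i/i = \nu_{\mathcal{IC}(\I)}(x^i)/i$. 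Letting $i \to \infty$ yields $\overline{\nu}_{\I}(x) \ge \overline{\nu}_{\mathcal{IC}(\I)}(x)$, and combined with the easy inequality the theorem follows. (If $\overline{\nu}_{\mathcal{IC}(\I)}(x) = \infty$ the same estimate shows $\overline{\nu}_{\I}(x) = \infty$.)

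The step I expect to be the main obstacle is the lemma "$y \in \overline{I_N} \implies \overline{\nu}_{\I}(y) \ge N$" — i.e. extracting an honest asymptotic order estimate from a single integral-dependence equation over the non-standard graded ring $R[\I]$. For an ideal this is packaged in Lemma \ref{LemId}, but here one must argue directly: from $y t^N$ integral over $R[\I]$, write $(yt^N)^k + b_1 (yt^N)^{k-1} + \cdots + b_k = 0$ with $b_j \in R[\I]$ of degree $jN$, so the degree-$jN$ component of $b_j$ lies in $I_{jN}$; comparing the degree-$kN$ components gives $y^k \in \sum_{j\ge 1} I_{jN}\, y^{k-j}$, and a standard determinant-trick / induction on powers then yields $y^{\ell} \in I_{\lfloor c\ell \rfloor N}$ for a sequence of $\ell \to \infty$ with $c \to 1$, hence $\overline{\nu}_{\I}(y) \ge N$. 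One should double-check that the filtration inequalities $I_a I_b \subseteq I_{a+b}$ and $I_a \supseteq I_{a+1}$ are enough to run this, and handle the bookkeeping of the non-integer-free estimates via $\liminf$ exactly as in Theorem \ref{LimExist}. Everything else is formal: $\I \subseteq \mathcal{IC}(\I)$, Remark \ref{rmk2.7}, and Proposition \ref{prop14}(1).
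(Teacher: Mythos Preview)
Your proposal is correct and follows the same architecture as the paper's proof: the inequality $\overline{\nu}_{\I} \le \overline{\nu}_{\mathcal{IC}(\I)}$ via $\I \subseteq \mathcal{IC}(\I)$ and Remark~\ref{rmk2.7}, and the reverse by first showing ``$x \in J_m \Rightarrow \overline{\nu}_{\I}(x) \ge m$'', then applying this to $x^i$ and invoking Proposition~\ref{prop14}(1) before letting $i \to \infty$.

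The only difference is in how the key implication you flag as the obstacle is handled. The paper avoids your integral-equation manipulation entirely by citing the reduction property: since $R$ is Noetherian, $I_{rm}$ is a reduction of $\overline{I_{rm}}$ (Corollary~1.2.5 of \cite{HS}), so there is $n$ with $(\overline{I_{rm}})^k \subseteq I_{rm}^{\,k-n+1} \subseteq I_{rm(k-n+1)}$ for all $k \ge n$; hence $\nu_{\I}(x^{rk}) \ge rm(k-n+1)$ and $\overline{\nu}_{\I}(x) \ge m$ drops out. Your direct route also works---from $y^k + \sum_{j=1}^k a_j y^{k-j} = 0$ with $a_j \in I_N^j$ a simple induction gives $y^{k+\ell} \in I_N^{\ell+1} \subseteq I_{N(\ell+1)}$ for all $\ell \ge 0$, no determinant trick needed---but invoking the reduction property is cleaner and eliminates exactly the bookkeeping you were worried about.
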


\begin{proof} Let $\mathcal{IC}(\I)=\{J_m\}_{m \in \NN}$ (as in Lemma \ref{lem19}). Since $\I \subseteq \mathcal{IC}(\I)$, $\overline{\nu}_{\I} \leq \overline{\nu}_{\mathcal{IC}(\I)}$, by Remark \ref{rmk2.7}.

Suppose $x \in J_m$ for some $m \in \NN$, that is, $x^r \in \overline{I_{rm}}$ for some $r > 0$. The ideal $I_{rm}$ is a reduction of $\overline{I_{rm}}$ by Corollary 1.2.5 \cite{HS}. By Remark 1.2.3  \cite{HS}, $\exists$ $n \in \ZZ_{>0}$ such that $\forall$ $k \geq n$, $x^{rk} \in (\overline{I_{rm}})^k\subset I^{k-n+1} \subseteq I_{rm(k-n+1)}$. This shows that $\nu_{\I}(x^{rk}) \geq rm(k-n+1)$, which implies $\lim\limits_{k \to \infty} \dfrac{\nu_{\I}(x^{rk})}{rk} \geq \lim\limits_{k \to \infty} \dfrac{rm(k-n+1)}{rk} = m$. Thus, if $x \in J_m$, $\overline{\nu}_{\I}(x) \geq m$. 

For $i \in \NN$, since $x^i \in J_{\nu_{\mathcal{IC}(\I)}(x^i)}, \overline{\nu}_{\I}(x^i) \geq \nu_{\mathcal{IC}(\I)}(x^i)$. By Proposition \ref{prop14}, $\overline{\nu}_{\I}(x) \geq \dfrac{\nu_{\mathcal{IC}(\I)}(x^i)}{i}$ $ \forall$ $i \in \ZZ_{>0}$. Thus, $\overline{\nu}_{\I}(x) \geq \lim\limits_{i \to \infty} \dfrac{\nu_{\mathcal{IC}(\I)}(x^i)}{i} = \overline{\nu}_{\mathcal{IC}(\I)}(x)$. This proves $\overline{\nu}_{\I}(x) = \overline{\nu}_{\mathcal{IC}(\I)}(x)$ $ \forall $ $x \in R$.
\end{proof}

\begin{Corollary}\label{ICcor}
Let $\I = \{I_m\}_{m \in \NN}$ be a filtration in $R$ and $\overline{\I} = \{\overline{I_m}\}_{m \in \NN}$. Then $\overline{\nu}_{\I} = \overline{\nu}_{\overline{\I}}$.
\end{Corollary}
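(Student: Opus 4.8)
The plan is to sandwich the filtration $\overline{\I} = \{\overline{I_m}\}_{m \in \NN}$ between $\I$ and $\mathcal{IC}(\I)$ and then invoke the results just proved. First I would verify that $\overline{\I}$ really is a filtration of $R$: we have $\overline{I_0} = R$, the containments $\overline{I_{m+1}} \subseteq \overline{I_m}$ follow from $I_{m+1} \subseteq I_m$ and monotonicity of integral closure, and $\overline{I_m} \cdot \overline{I_n} \subseteq \overline{I_m I_n} \subseteq \overline{I_{m+n}}$ using that the product of integral closures is contained in the integral closure of the product (Corollary 1.3.1 of \cite{HS}, say) together with $I_m I_n \subseteq I_{m+n}$. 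So $\overline{\I}$ is a legitimate filtration and $\overline{\nu}_{\overline{\I}}$ makes sense.

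Next I would establish the chain of inclusions $\I \subseteq \overline{\I} \subseteq \mathcal{IC}(\I)$. The first inclusion $I_m \subseteq \overline{I_m}$ is immediate. For the second, if $f \in \overline{I_m}$ then taking $r = 1$ in the definition of $J_m$ from Lemma \ref{lem19} (recalling $J_m = \{f \mid f^r \in \overline{I_{rm}} \text{ for some } r > 0\}$) gives $f \in J_m$; so $\overline{I_m} \subseteq J_m$ for all $m$, i.e. $\overline{\I} \subseteq \mathcal{IC}(\I)$.

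Finally, applying Remark \ref{rmk2.7} to the inclusions $\I \subseteq \overline{\I} \subseteq \mathcal{IC}(\I)$ yields
$$
\overline{\nu}_{\I} \leq \overline{\nu}_{\overline{\I}} \leq \overline{\nu}_{\mathcal{IC}(\I)}.
$$
But by Theorem \ref{finase} the two outer terms are equal, $\overline{\nu}_{\I} = \overline{\nu}_{\mathcal{IC}(\I)}$, so the middle term is squeezed and $\overline{\nu}_{\I} = \overline{\nu}_{\overline{\I}}$, as desired. There is no real obstacle here; the only point requiring a moment's care is checking that $\overline{\I}$ is a filtration (the multiplicativity step), and that the definition of $J_m$ in Lemma \ref{lem19} indeed contains $\overline{I_m}$ via the choice $r=1$. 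Everything else is a formal application of the monotonicity of $\overline{\nu}$ and the already-established equality $\overline{\nu}_{\I} = \overline{\nu}_{\mathcal{IC}(\I)}$.
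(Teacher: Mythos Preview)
Your proposal is correct and follows exactly the same route as the paper's proof: the paper simply observes the chain $I_m \subseteq \overline{I_m} \subseteq J_m$ (with $\{J_m\} = \mathcal{IC}(\I)$) and invokes Theorem \ref{finase} together with Remark \ref{rmk2.7}. Your write-up is more detailed in verifying that $\overline{\I}$ is a filtration and that $\overline{I_m} \subseteq J_m$ via $r=1$, but the argument is the same.
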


\begin{proof}
This follows from Theorem \ref{finase} and Remark \ref{rmk2.7} since $I_m \subseteq \overline{I_m} \subseteq J_m$ $ \forall $ $m \in \NN$, where $\mathcal{IC}(\mathcal I)=\{J_m\}_{m \in \NN}$ is the integral closure of the filtration $\I$.
\end{proof}

\section{Projective Equivalence}

\begin{Definition}
We define filtrations $\I$ and $\J$ in a Noetherian ring $R$ to be \textbf{projectively equivalent} if there exists $\a \in \RR_{> 0}$ such that $\overline{\nu}_{\I} = \a \hspace{0.02in} \overline{\nu}_{\J} $.
\end{Definition}

This generalizes the classical definition of projective equivalence of ideals. Ideals $I$ and $J$ are projectively equivalent if there exists $\alpha\in \RR_{>0}$ such that $\overline{\nu}_I=\alpha \hspace{0.02in} \overline{\nu}_J$. Proposition \ref{prop23} in the introduction gives the beautiful classical theorem characterizing projectively equivalent ideals. Proposition \ref{prop23} is generalized to filtrations in Theorem \ref{thm25}.

Suppose that $I$ and $J$ are ideals in a Noetherian ring $R$ and $\I = \{I^m\}_{m \in \NN}$ and $\J = \{J^m\}_{m \in \NN}$ are their associated adic-filtrations. We have that $\overline{\nu}_{I} = \overline{\nu}_{\I}$ and $\overline{\nu}_J = \overline{\nu}_{\J}$, so the ideals $I$ and $J$ are projectively equivalent if and only if the associated adic-filtrations $\I$ and $\J$ are projectively equivalent.

Theorem \ref{thm23} shows that given any $\a \in \RR_{>0}$, there are projectively equivalent filtrations $\I$ and $\J$ in a ring with $\overline{\nu}_{\I} = \a \hspace{0.02in} \overline{\nu}_{\J}$. Thus, the conclusion of the rationality of $\a$ (for projectively equivalent ideals commented after Proposition \ref{prop23}) does not extend to filtrations.

We provide the following necessary and sufficient condition for projective equivalence of filtrations.
\begin{Theorem}\label{thm25}
Let $\I = \{I_m\}_{m \in \NN}$ and $\J = \{J_m\}_{m \in \NN}$ be filtrations in a Noetherian ring $R$. Then $\I$ and $\J$ are projectively equivalent if and only if $\exists$ $\alpha$, $\beta$ $\in \RR_{>0}$ such that $\mathcal{IC}(\I^{(\alpha)}) = \mathcal{IC}(\J^{(\beta)})$, or equivalently, $\overline{R[\I^{(\a)}]} = \overline{R[\J^{(\beta)}]}$.
\end{Theorem}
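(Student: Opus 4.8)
The plan is to first reduce the "equivalently" clause to a triviality: by Lemma~\ref{lem19}, the map $\mathcal K \mapsto \overline{R[\mathcal K]}$ sends a filtration to $\sum_m J_m t^m$ where $\{J_m\} = \mathcal{IC}(\mathcal K)$, and conversely the degree-$m$ pieces of $\overline{R[\mathcal K]}$ recover $\mathcal{IC}(\mathcal K)$. Hence $\mathcal{IC}(\mathcal I^{(\alpha)}) = \mathcal{IC}(\mathcal J^{(\beta)})$ holds if and only if $\overline{R[\mathcal I^{(\alpha)}]} = \overline{R[\mathcal J^{(\beta)}]}$, and I only need to prove the equivalence of projective equivalence with the first of the two conditions.

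For the direction ($\Leftarrow$): suppose $\mathcal{IC}(\mathcal I^{(\alpha)}) = \mathcal{IC}(\mathcal J^{(\beta)})$ for some $\alpha,\beta \in \RR_{>0}$. By Theorem~\ref{finase}, $\overline{\nu}_{\mathcal I^{(\alpha)}} = \overline{\nu}_{\mathcal{IC}(\mathcal I^{(\alpha)})} = \overline{\nu}_{\mathcal{IC}(\mathcal J^{(\beta)})} = \overline{\nu}_{\mathcal J^{(\beta)}}$. Now apply Theorem~\ref{thm23} twice: $\overline{\nu}_{\mathcal I} = \alpha\,\overline{\nu}_{\mathcal I^{(\alpha)}}$ and $\overline{\nu}_{\mathcal J} = \beta\,\overline{\nu}_{\mathcal J^{(\beta)}}$. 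Combining, $\overline{\nu}_{\mathcal I} = \alpha\,\overline{\nu}_{\mathcal I^{(\alpha)}} = \alpha\,\overline{\nu}_{\mathcal J^{(\beta)}} = \tfrac{\alpha}{\beta}\,\overline{\nu}_{\mathcal J}$, so $\mathcal I$ and $\mathcal J$ are projectively equivalent (with ratio $\alpha/\beta$).

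For the harder direction ($\Rightarrow$): suppose $\overline{\nu}_{\mathcal I} = \gamma\,\overline{\nu}_{\mathcal J}$ for some $\gamma \in \RR_{>0}$. I want to produce $\alpha, \beta$ with $\overline{\nu}_{\mathcal I^{(\alpha)}} = \overline{\nu}_{\mathcal J^{(\beta)}}$, because then Theorem~\ref{finase} gives $\mathcal{IC}(\mathcal I^{(\alpha)}) = \mathcal{IC}(\mathcal J^{(\beta)})$ \emph{provided} the integral-closure filtration is determined by the asymptotic Samuel function — but that last implication is exactly what is \emph{not} true in general for filtrations (this is the content of Example~\ref{Newex}, $\mathcal K(\mathcal I) \neq \mathcal{IC}(\mathcal I)$). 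So I expect the actual argument to be more delicate: rather than matching asymptotic Samuel functions, I should match the filtrations $\mathcal{IC}(\mathcal I^{(\alpha)})$ and $\mathcal{IC}(\mathcal J^{(\beta)})$ directly. The natural choice is $\alpha = \gamma$ and $\beta = 1$ when $\gamma \ge 1$ (and symmetrically otherwise): then I must show $\mathcal{IC}(\mathcal I^{(\gamma)}) = \mathcal{IC}(\mathcal J)$, i.e. for each $m$, $\{f : f^r \in \overline{I_{\lceil \gamma r m\rceil}}\ \text{some}\ r\} = \{f : f^r \in \overline{J_{rm}}\ \text{some}\ r\}$. The key input is that $f^r \in \overline{I_k}$ is governed by $\overline{\nu}$: by the argument in the proof of Theorem~\ref{finase}, $f \in \mathcal{IC}(\mathcal I)_m$ iff $\overline{\nu}_{\mathcal I}(f) \ge m$ \emph{is close enough} — more precisely, $\overline{\nu}_{\mathcal I}(f) > m$ implies $f \in \mathcal{IC}(\mathcal I)_m$, and $f \in \mathcal{IC}(\mathcal I)_m$ implies $\overline{\nu}_{\mathcal I}(f) \ge m$. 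Combined with $\overline{\nu}_{\mathcal I^{(\gamma)}} = \tfrac1\gamma \overline{\nu}_{\mathcal I} = \overline{\nu}_{\mathcal J}$ (Theorems~\ref{thm23} and the hypothesis), this pins down both sides as $\{f : \overline{\nu}_{\mathcal J}(f) \ge m\}$ up to the boundary. The main obstacle is precisely this boundary issue: $\mathcal{IC}(\mathcal K)_m$ sits strictly between $\{\overline{\nu}_{\mathcal K} > m\}$ and $\{\overline{\nu}_{\mathcal K} \ge m\}$, and these two sets can differ. I expect to resolve it by passing through the twist structure itself — using that $I^{(\gamma)}_{rm} = I_{\lceil \gamma r m\rceil}$ and exploiting that $r$ ranges over all positive integers in the definition of $J_m$, so that the ceiling's discrepancy is absorbed in the limit $r \to \infty$, i.e. showing $f^r \in \overline{I_{\lceil \gamma r m\rceil}}$ for some $r$ iff $f^s \in \overline{J_{sm}}$ for some $s$ by a direct comparison of reductions as in Theorem~\ref{finase}, choosing $r$ large.
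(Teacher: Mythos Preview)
Your treatment of the ``equivalently'' clause and of the direction ($\Leftarrow$) is correct and matches the paper exactly.

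The gap is in ($\Rightarrow$). Your proposed choice $\alpha=\gamma$, $\beta=1$ does \emph{not} work, and the failure is precisely the boundary phenomenon you yourself flagged. Take Example~\ref{ex26} with $c=1$: there $R=k[[x]]$, $I_m=(x^{m+1})$, $J_m=(x^m)$ for $m>0$, and $\overline{\nu}_{\mathcal I}=\overline{\nu}_{\mathcal J}$, so $\gamma=1$. One computes directly that $\mathcal{IC}(\mathcal I)_m=(x^{m+1})$ while $\mathcal{IC}(\mathcal J)_m=(x^m)$, so $\mathcal{IC}(\mathcal I^{(\gamma)})=\mathcal{IC}(\mathcal I)\neq\mathcal{IC}(\mathcal J)$. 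Your plan to ``absorb the ceiling's discrepancy by choosing $r$ large'' cannot help here: when $\gamma=1$ there is no ceiling at all, yet the two integral closures still differ. More generally, the Remark following the paper's proof shows that for \emph{no} rational $\alpha,\beta$ does $\mathcal{IC}(\mathcal I^{(\alpha)})=\mathcal{IC}(\mathcal J^{(\beta)})$ hold in this example.

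The paper's resolution is the one idea you are missing: given $\overline{\nu}_{\mathcal I}=\gamma\,\overline{\nu}_{\mathcal J}$, choose $\alpha,\beta\in\RR_{>0}\setminus\QQ$ with $\alpha=\beta\gamma$, and prove $\mathcal{IC}(\mathcal I^{(\alpha)})=\mathcal{IC}(\mathcal J^{(\beta)})$. Irrationality of $\beta$ (and of $\alpha$) guarantees that $\lceil\beta\gamma rm\rceil>\beta\gamma rm$ and $\lceil\beta sm\rceil>\beta sm$ strictly for all $r,s,m>0$. This strict gap is exactly the $\epsilon>0$ needed to pass from the weak inequality $\overline{\nu}_{\overline{\mathcal J}}(x)\ge\frac{\lceil\beta\gamma rm\rceil}{r\gamma}$ (which one gets from $x\in\mathcal{IC}(\mathcal I^{(\alpha)})_m$ via the argument of Theorem~\ref{finase}) to the conclusion $\nu_{\overline{\mathcal J}}(x^{i_0})\ge\lceil\beta i_0 m\rceil$ for some $i_0$, i.e.\ $x\in\mathcal{IC}(\mathcal J^{(\beta)})_m$. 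In short, the irrational twist manufactures the room you need to cross the boundary between $\{\overline{\nu}\ge m\}$ and $\{\overline{\nu}>m\}$.
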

\begin{proof}
Suppose $ \exists $ $\a, \beta \in \RR_{>0}$ such that $\mathcal{IC}(\I^{(\a)}) = \mathcal{IC}(\J^{(\beta)})$. By Theorems \ref{thm23} and \ref{finase}, $\overline{\nu}_{\I} 
= \a \hspace{0.02in} \overline{\nu}_{\I^{(\alpha)}} 
= \a \hspace{0.02in} \overline{\nu}_{\mathcal{IC}(\I^{(\alpha)})}
= \a \hspace{0.02in} \overline{\nu}_{\mathcal{IC}(\J^{(\beta)})}
= \a \hspace{0.02in} \overline{\nu}_{\J^{(\beta)}} 
= \a \hspace{0.02in} \dfrac{\overline{\nu}_{\J}}{\beta}$. 
This shows that $\I$ and $\J$ are projectively equivalent.

Assume $\I$ and $\J$ are projectively equivalent, that is, $\exists$ $\g \in \RR_{>0}$ such that $\overline{\nu}_{\I} = \g \hspace{0.02in} \overline{\nu}_{\J} $. Choose $\alpha$, $\beta$ $\in \RR_{>0} \setminus \QQ$ such that $\dfrac{\alpha}{\beta} = \g$, or, $\alpha = \beta \g$. We show that $\mathcal{IC}(\I^{(\beta \g)}) = \mathcal{IC}(\J^{(\beta)})$. Using Lemma \ref{lem19}, $\mathcal{IC}(\I^{(\beta \g)}) = \{K_m\}_{m \in \NN}$ where $$ K_m = \{f \in R \hspace{0.05in} | \hspace{0.05in} f^r \in \overline{I^{(\beta \g)}_{rm}} = \overline{I_{\lceil \beta \g rm \rceil}} \text{ for some } r > 0\} $$
 and $\mathcal{IC}(\J^{(\beta)}) = \{L_m\}_{m \in \NN}$ where 
 $$
 L_m = \{f \in R \hspace{0.05in} | \hspace{0.05in} f^t \in \overline{J^{(\beta)}_{tm}} = \overline{J_{\lceil \beta t m \rceil}} \text{ for some } t > 0\}.
 $$
 Recall the filtrations $\overline{\mathcal I} = \{\overline{I_m}\}_{m \in \NN}$ and $\overline{\mathcal J} = \{\overline{J_m}\}_{m \in \NN}$ defined in Corollary \ref{ICcor}.

Let $x \in K_m$, that is, $x^r \in \overline{I_{\lceil\beta\gamma rm\rceil}}$ for some $r > 0$. Then $ \forall $ $ i \in \NN$, $x^{ri} \in (\overline{I_{\lceil\beta\gamma rm\rceil}})^i \subseteq \overline{I_{\lceil \beta \g r m \rceil i}}$, which implies $\nu_{\overline{\I}}(x^{ri}) \geq \lceil \beta \g rm\rceil i$. This gives $\lim\limits_{i \to \infty} \dfrac{\nu_{\overline{\I}}(x^{ri})}{ri} \geq \lim\limits_{i \to \infty} \dfrac{\lceil \beta \g rm\rceil i}{ri}$, that is, $\overline{\nu}_{\overline{\I}}(x) \geq \dfrac{\lceil \beta \g rm\rceil}{r}$. By the assumption, 
$\overline{\nu}_{\overline{\J}}(x) \geq \dfrac{\lceil \beta \g rm\rceil}{r\g}$, that is, $\lim\limits_{i \to \infty} \dfrac{\nu_{\overline{\J}}(x^i)}{i} \geq \dfrac{\lceil \beta \g rm\rceil}{r\g}$.

Suppose $\lim\limits_{i \to \infty} \dfrac{\nu_{\overline{\J}}(x^i)}{i} = \dfrac{\lceil \beta \g rm\rceil}{r\g}$. Then, given $\epsilon > 0$, $\exists$ $n_0 = n_0(\epsilon) \in \ZZ_{>0}$ such that $- \epsilon <  \dfrac{\nu_{\overline{\J}}(x^i)}{i} - \dfrac{\lceil \beta \g rm\rceil}{r\g} < \epsilon$ $\forall$ $ i \geq n_0$. For $\epsilon = \dfrac{\lceil \beta \g rm\rceil}{r\g} - \beta m > 0$, let $i_0 = n_0 (\epsilon)$.
We have that $\epsilon$ is positive since $\beta\not\in \QQ$. 
So, $-\epsilon = \beta m  - \dfrac{\lceil \beta \g rm\rceil}{r \g} < \dfrac{\nu_{\overline{\J}}(x^{i_0})}{i_0} - \dfrac{\lceil \beta \g rm\rceil}{r\g}$ implying $\nu_{\overline{\J}}(x^{i_0}) > \beta m i_0$, or that, $\nu_{\overline{\J}}(x^{i_0}) \geq \lceil \beta  i_0 m\rceil$. This shows that $x^{i_0} \in \overline{J_{\lceil \beta  i_0 m\rceil}}$, that is, $x \in L_m$. 

If $\lim\limits_{i \to \infty} \dfrac{\nu_{\overline{\J}}(x^i)}{i} > \dfrac{\lceil \beta \g rm\rceil}{r\g}$, $\exists$ $j_0 \in \ZZ_{>0}$ such that $\dfrac{\nu_{\overline{\J}}(x^{j_0})}{j_0} > \dfrac{\lceil \beta \g rm\rceil}{r\g}$. This shows that $\nu_{\overline{\J}}(x^{j_0}) > \dfrac{j_0 \lceil \beta \g rm\rceil}{r\g} \geq \dfrac{j_0 \beta \g rm}{r\g}$ implying that $\nu_{\overline{\J}}(x^{j_0}) \geq \lceil j_0 \beta m \rceil$, so,  $x^{j_0} \in \overline{J_{\lceil j_0 \beta m \rceil}}$, or that, $x \in L_m$. Thus, we have shown that $K_m \subseteq L_m$ $ \forall $ $m \in \NN$. Similarly, we can show that $L_m \subseteq K_m$ $ \forall $ $m \in \NN$, proving that $\mathcal{IC}(\I^{(\beta\gamma)}) = \mathcal{IC}(\J^{(\beta)})$.
\end{proof}

\begin{Remark}

In the proof above, the condition of $\alpha$ and $\beta$ being real numbers cannot be weakened, as shown in the following example, where $\I$ and $\J$ are projectively equivalent filtrations with $\overline{\nu}_{\I} = \overline{\nu}_{\J}$ but for no $\alpha$ or $\beta$ $\in \QQ_{>0}$ do we have that $\overline{R[\I^{(\a)}]} = \overline{R[\J^{(\beta)}]}$. This follows from (\ref{eqEx3}) below, since $\overline{R[\I^{(\a)}]} = \overline{R[\J^{(\beta)}]}$ implies $\alpha=\beta$ by Theorems \ref{finase} and \ref{thm23}.
\end{Remark}

\begin{Example}\label{ex26} We give some calculations of twists of filtrations in the power series ring in one variable over a field.
\end{Example}

Let $R = k[[x]]$, a power series ring in the variable $x$ over a field $k$. For $f \in R$, let $ord_{k[[x]]}(f) = \min\{r \hspace{0.05in} | \hspace{0.05in} f_r \neq 0 \}$ where  $f = \sum\limits_{m=0}^{\infty} f_m x^m$ with $f_m \in k$. 

Fix $c \in \ZZ_{>0}$. Let $\I = \{I_m\}_{m \in \NN}$ be given by $I_0 = R$ and let $I_m = (x^{m+c})$ for $m \in \ZZ_{>0}$ and $\J = \{J_m\}_{m \in \NN}$ be given by $J_0 = R$ and $J_m = (x^{m})$ for $m \in \ZZ_{>0}$. Both $\I$ and $\J$ are filtrations in $R$ and $\I \subseteq \J$.

Let $f \in R$ with $ord_{k[[x]]}(f) = c_0$. For $n \in \ZZ_{>0}, \nu_{\J}(f^n) = n c_0$ and  $\nu_{\I}(f^n) = n c_0 - c$ (if $nc_0 > c$) and $=0$ (if $nc_0 \leq c$). 
$$\overline{\nu}_{\I}(f) = \lim\limits_{n \to \infty} \dfrac{\nu_{\I}(f^n)}{n} = \lim\limits_{n \to \infty} \dfrac{nc_0 -c}{n} = c_0 \text{ and } \overline{\nu}_{\J}(f) = \lim\limits_{n \to \infty} \dfrac{\nu_{\J}(f^n)}{n} =
\lim\limits_{n \to \infty} \dfrac{nc_0}{n} = c_0$$
Thus $\I$ and $\J$ are projectively equivalent with $\overline{\nu}_{\mathcal I}=\overline{\nu}_{\mathcal J}$.

Observe that $\forall$ $\alpha\in \RR_{>0}$, $\overline{R[\I^{(\a)}]} \subseteq \overline{R[\J^{(\a)}]}$ in $R[t]$. We will show that
\begin{equation}\label{eqEx1}
R[\J^{(\alpha)}]\mbox{ is integrally closed in $R[t]$ $\forall$ $\alpha \in \RR_{>0}$}.
\end{equation}
\begin{equation}\label{eqEx2}
R[\J^{(\alpha)}] \subseteq \overline{R[\I^{(\alpha)}]}\mbox{ when $\alpha \in
\RR_{>0} \setminus \QQ$, proving that $\overline{R[\I^{(\alpha)}]} = \overline{R[\J^{(\alpha)}]}$}.
\end{equation}
\begin{equation}\label{eqEx3}
\overline{R[\I^{(\alpha)}]} \subsetneqq R[\J^{(\alpha)}] \mbox{ when $\alpha \in \QQ$}.
\end{equation}

To prove (\ref{eqEx1}), it is enough to show it for homogeneous elements in $R[t]$. Let $ft^n \in R[t]$ be integral over $R[\J^{(\a)}]$ with $ord_{k[[x]]}(f) = c_0$, and $n \in \NN$. If $n = 0$ or $c_0 \geq \lceil \a n \rceil$, then $ft^n \in R[\J^{(\a)}]$. If $c_0 < \lceil \a n \rceil$, since $ft^n$ is integral over $R[\J^{(\a)}]$, we have the following homogeneous equation for some $d \in \ZZ_{>0}$.
$$ (ft^n)^d + a_1 t^n (ft^n)^{d-1} + \cdots + a_i t^{ni} (ft^n)^{d-i} + \cdots + a_{d-1} t^{n(d-1)} (ft^n) + a_dt^{nd} = 0 $$
where $a_i \in J_{ni}^{(\a)} = (x^{\lceil \a n i \rceil})$ $\forall$ $1 \leq i \leq d$.

In particular, the coefficient of $t^{nd} = 0$, that is, 
$$f^d + a_1 f^{d-1} + \cdots + a_i f^{d-i} + \cdots + a_{d-1} f + a_d = 0.$$
 Since $c_0 < \lceil \a n \rceil$, $c_0 < \a n$. However $ord_{k[[x]]}(f^d) = dc_0$ but 
 $$
 ord_{k[[x]]}(a_i f^{d-i}) \geq \lceil \a ni\rceil + (d-i)c_0 \geq \a ni + dc_0 - ic_0 > dc_0.
 $$
  Therefore, the above equation is not possible. Thus, if $ft^n$ is integral over  $R[\J^{(\a)}]$, then $ft^n \in R[\J^{(\a)}]$, proving that $R[\J^{(\a)}]$ is integrally closed. 
 
To prove (\ref{eqEx2}), consider a homogeneous element in $R[\J^{(\a)}]$, say, $ft^n$ where $ord_{k[[x]]}(f) = c_0 \geq \lceil \a n \rceil$, and $n \in \NN$, which is integral over $R[\I^{(\a)}]$. If $c_0 \geq \lceil \a n \rceil + c$, then $ft^n \in R[\I^{(\a)}]$. If $c_0 < \lceil \a n \rceil + c$, since $c_0 \geq \lceil \a n \rceil > \a n$, we can let $d \in \ZZ_{>0}$ be such that $d \geq \dfrac{c}{c_0 - \a n}$. Then $ft^n$ satisfies the following integral equation over $R[\I^{(\a)}]$:
$$(ft^n)^d + a_dt^{nd} = 0 $$
where $a_d = -f^d$. By our choice of $d$, $dc_0 \geq \a n d + c$ which implies $dc_0 \geq \lceil \a n d \rceil + c$. So, $ord_{k[[x]]}(f^d) \geq \lceil \a n d \rceil + c$, which shows $a_d \in I_{\lceil \a n d\rceil} = I_{nd}^{(\a)}$.

For (\ref{eqEx3}), say $\a = \dfrac{p}{q}$ for some $p, q \in \ZZ_{>0}$. Then $x^pt^q \in R[\J^{(\a)}]$ but $\notin \overline{R[\I^{(\a)}]}$. If $x^pt^q$ were integral over $R[\I^{(\a)}]$, then for some $d \in \ZZ_{>0}$ we would have that
$$ (x^p)^d + a_1 (x^p)^{d-1} + \ldots + a_i (x^p)^{d-i} + \ldots + a_{d-1} (x^p) + a_d = 0$$
where $ord_{k[[x]]}(a_i) \geq \lceil \a q i\rceil + c = pi + c$. But this is not possible since $ord_{k[[x]]}(a_i (x^p)^{d-i}) \geq pi+c+p(d-i) > pd$ $ \forall$ $1 \leq i \leq d$.

\vspace{0.05in}


\begin{Theorem}\label{thm31}
For a filtration $\I = \{I_m\}_{m \in \NN}$ of ideals in $R$, define
$$
K(\I)_m \coloneqq \{f \in R \hspace{0.05in} | \hspace{0.05in} \overline{\nu}_{\I}(f) \geq m\} \hspace{0.05in} \forall \hspace{0.05in} m \in \NN.$$
Then $\K(\I) \coloneqq \{K(\I)_m\}_{m \in \NN}$ is a filtration of ideals in $R$ and $\I \subseteq \K(\I)$. Moreover, $\overline{\nu}_{\I} = \overline{\nu}_{\K(\I)}$ and $\K(\I)$ is the unique, largest filtration $\J$ such that $\overline{\nu}_{\I}= \overline{\nu}_{\J}$.
\end{Theorem}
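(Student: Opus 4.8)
The plan is to verify the four assertions in the order stated, using Proposition \ref{prop14} and Remark \ref{rmk2.7} as the essential tools; everything reduces to dividing the elementary inequalities of Remark \ref{rmk3.3} by $n$ and passing to the limit.

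First I would check that each $K(\I)_m$ is an ideal. That $0 \in K(\I)_m$ is clear since $\overline{\nu}_{\I}(0) = \infty$, and closure under addition is exactly Proposition \ref{prop14}(2). For $r \in R$ and $f \in K(\I)_m$, the estimate $\nu_{\I}((rf)^n) = \nu_{\I}(r^n f^n) \geq \nu_{\I}(r^n) + \nu_{\I}(f^n) \geq \nu_{\I}(f^n)$ from Remark \ref{rmk3.3}, divided by $n$ and taken to the limit, gives $\overline{\nu}_{\I}(rf) \geq \overline{\nu}_{\I}(f) \geq m$, so $rf \in K(\I)_m$. For the filtration axioms: $K(\I)_0 = R$ because $\overline{\nu}_{\I} \geq 0$; $K(\I)_{m+1} \subseteq K(\I)_m$ is immediate; and for products one first records the supermultiplicativity $\overline{\nu}_{\I}(fg) \geq \overline{\nu}_{\I}(f) + \overline{\nu}_{\I}(g)$, obtained by dividing $\nu_{\I}((fg)^n) \geq \nu_{\I}(f^n) + \nu_{\I}(g^n)$ by $n$ and letting $n \to \infty$, which yields $K(\I)_m \cdot K(\I)_n \subseteq K(\I)_{m+n}$ at once. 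The inclusion $\I \subseteq \K(\I)$ holds since $f \in I_m$ gives $f^n \in I_{nm}$, hence $\overline{\nu}_{\I}(f) \geq m$.

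Next I would prove $\overline{\nu}_{\I} = \overline{\nu}_{\K(\I)}$. One inequality is Remark \ref{rmk2.7} applied to $\I \subseteq \K(\I)$. For the reverse, the key observation is that for every $f \in R$ one has $\nu_{\K(\I)}(f) = \sup\{m \in \NN \mid \overline{\nu}_{\I}(f) \geq m\} \leq \overline{\nu}_{\I}(f)$ (it equals $\lfloor \overline{\nu}_{\I}(f)\rfloor$ when $\overline{\nu}_{\I}(f)$ is finite, and $\infty$ otherwise). Applying this to $f^n$ and invoking Proposition \ref{prop14}(1) gives $\nu_{\K(\I)}(f^n) \leq \overline{\nu}_{\I}(f^n) = n\,\overline{\nu}_{\I}(f)$; dividing by $n$ and letting $n \to \infty$ produces $\overline{\nu}_{\K(\I)}(f) \leq \overline{\nu}_{\I}(f)$.

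Finally, for maximality and uniqueness, I would take any filtration $\J = \{J_m\}_{m \in \NN}$ with $\overline{\nu}_{\J} = \overline{\nu}_{\I}$; then for $f \in J_m$ we get $f^n \in J_{nm}$, so $\overline{\nu}_{\I}(f) = \overline{\nu}_{\J}(f) \geq m$, i.e. $f \in K(\I)_m$, whence $\J \subseteq \K(\I)$. Together with $\overline{\nu}_{\K(\I)} = \overline{\nu}_{\I}$ this shows $\K(\I)$ is a filtration with the same asymptotic Samuel function as $\I$ containing every such filtration, and any filtration sharing both properties must coincide with $\K(\I)$. I do not expect a real obstacle here: each step is a direct consequence of the already-established Proposition \ref{prop14} and Remark \ref{rmk2.7}, and the only point needing a little care is the bookkeeping with the value $\infty$ (for $f \in \bigcap_m I_m$) and the inequality $\nu_{\K(\I)}(f) \le \overline{\nu}_{\I}(f)$, which is precisely what drives the nontrivial direction $\overline{\nu}_{\K(\I)} \le \overline{\nu}_{\I}$.
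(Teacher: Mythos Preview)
Your proof is correct and follows essentially the same route as the paper's: both arguments use the inequality $\nu_{\K(\I)}(f^i)\le \overline{\nu}_{\I}(f^i)$ together with Proposition \ref{prop14}(1) for the nontrivial direction of $\overline{\nu}_{\I}=\overline{\nu}_{\K(\I)}$, and the same maximality argument. The only cosmetic difference is that you invoke Proposition \ref{prop14}(2) directly for closure under addition, whereas the paper redoes that $\epsilon$-computation in full inside the proof.
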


\begin{proof}
Denote $\K(\I)$ by $\K$ and $K(\I)_m$ by $K_m$ $ \forall$ $m \in \NN$. If $f \in I_n$ for some $n \in \NN$, then $f^i \in (I_n)^i \subseteq I_{ni}$ implying $\nu_{\I}(f^i) \geq ni$ $ \forall$ $i \in \NN$, which gives, $\overline{\nu}_{\I}(f) \geq n$. Thus, $I_n \subseteq K_n$ $ \forall$ $n \in \NN$.

Suppose $f, g \in K_n$ for some $n \in \NN$, that is, $\overline{\nu}_{\I}(f) \geq n$ and $\overline{\nu}_{\I}(g) \geq n$. 

For $\epsilon > 0$, $ \exists$ $m_0 \in \ZZ_{>0}$ such that $\forall$ $m \geq m_0$, $\dfrac{\nu_{\I}(f^m)}{m} \geq n - \epsilon$ and $\dfrac{\nu_{\I}(g^m)}{m} \geq n - \epsilon$. Note that $ \forall $ $ m, k \in \ZZ_{>0}$, $\nu_{\I}((f+g)^{mk}) \geq \min\limits_{i+j = mk} \{\nu_{\I}(f^i g^j)\}$, using Remark \ref{rmk3.3}. Since $i + j = mk, mk \geq m \bigg\lfloor  \dfrac{i}{m} \bigg\rfloor + m \bigg\lfloor  \dfrac{j}{m} \bigg\rfloor \geq mk - 2m$. Using Remark \ref{rmk3.3} again,
$$\nu_{\I}(f^ig^j) \geq \nu_{\I}(f^i) + \nu_{\I}(g^j) \geq \nu_{\I}(f^{m \lfloor  \frac{i}{m} \rfloor }) + \nu_{\I}(g^{m\lfloor  \frac{j}{m} \rfloor }) \geq \bigg\lfloor  \dfrac{i}{m} \bigg\rfloor \nu_{\I}(f^m) + \bigg\lfloor  \dfrac{j}{m} \bigg\rfloor \nu_{\I}(g^m).$$

For $m \geq m_0$, $\nu_{\I}(f^ig^j)  \geq \bigg\lfloor  \dfrac{i}{m} \bigg\rfloor m(n - \epsilon) + \bigg\lfloor  \dfrac{j}{m} \bigg\rfloor m(n - \epsilon) \geq (mk - 2m)(n - \epsilon)$. 

Thus, for $k >> 0$,
$$ \dfrac{\nu_{\I}((f+g)^{mk})}{mk} \geq \dfrac{mk(n - \epsilon) - 2m (n - \epsilon)}{mk} = n - \epsilon - \dfrac{2}{k} (n - \epsilon).$$
Taking limits as $k \to \infty$, we get $\overline{\nu}_{\I}(f + g) \geq n - \epsilon$. 
Since $\epsilon$ is arbitrary, $\overline{\nu}_{\I}(f + g) \geq n$. This proves that $f + g \in K_n$ $ \forall$ $f,g \in K_n$.

For $r \in R$ and $f \in K_n$, using Remark \ref{rmk3.3}
$$\overline{\nu}_{\I}(rf) = \lim\limits_{i \to \infty} \dfrac{\nu_{\I}((rf)^i)}{i} \geq \lim\limits_{i \to \infty} \dfrac{\nu_{\I}(r^i)}{i} + \lim\limits_{i \to \infty} \dfrac{\nu_{\I}(f^i)}{i} = \overline{\nu}_{\I}(r) +
\overline{\nu}_{\I}(f) \geq n.$$ This shows that $rf \in K_n$, thus proving that $K_n$ is an ideal in $R$.

Clearly $K_0 = R$. If $f \in K_{n+1}$ for some $n \in \NN$, that is, $\overline{\nu}_{\I}(f) \geq n+1 > n$, then $f \in K_n$ proving that $K_{n+1} \subseteq K_n$ $ \forall$ $n \in \NN$.

Suppose $f \in K_m$ and $g \in K_n$ for some $m, n \in \NN$. Then by Remark \ref{rmk3.3}
$$\overline{\nu}_{\I}(fg) = \lim\limits_{i \to \infty} \dfrac{\nu_{\I}((fg)^i)}{i} \geq \lim\limits_{i \to \infty} \dfrac{\nu_{\I}(f^i)}{i} + \lim\limits_{i \to \infty} \dfrac{\nu_{\I}(g^i)}{i} \geq n + m.$$
Thus, $K_n K_m \subseteq K_{n+m}$. This proves $\K$ is a filtration of ideals in $R$.

For $f \in R$ and $i \in \NN$, it follows from the definition that $\overline{\nu}_{\I}(f^i) \geq \nu_{\K}(f^i)$ since $f^i \in K_{\nu_{\K}(f^i)}$. Using Proposition \ref{prop14}, $\overline{\nu}_{\I}(f) \geq \dfrac{\nu_{\K}(f^i)}{i}$ $ \forall$ $i \in \ZZ_{>0}$. Thus, $\overline{\nu}_{\I}(f) \geq \overline{\nu}_{\K}(f)$ $ \forall $ $f \in R$. Since $\I \subseteq \K$, $\overline{\nu}_{\I} \leq \overline{\nu}_{\K}$ by Remark \ref{rmk2.7}. This proves $\overline{\nu}_{\I} = \overline{\nu}_{\K}$.

For any filtration $\MC{L}$ of $R$, let $\K(\MC{L}) = \{K(\MC{L})_m\}_{m \in \NN}$ where $K(\MC{L})_m = \{f \in R \hspace{0.1in} | \hspace{0.1in} \overline{\nu}_{\L}(f) \geq m\}$. If $\J$ is a filtration such that $\overline{\nu}_{\I} = \overline{\nu}_{\J}$, then $\J \subseteq \K(\J) = \K(\I)$. This shows that every filtration $\J$ such that $\overline{\nu}_{\I} = \overline{\nu}_{\J}$ is contained in $\K(\I)$, and we have shown earlier that $\overline{\nu}_{\K(\I)} = \overline{\nu}_{\I}$, proving that $\K(\I)$ is the unique, largest filtration $\J$ such that $\overline{\nu}_{\I} = \overline{\nu}_{\J}$.
 \end{proof}
 
\begin{Example}\label{Newex} In general, the integral closure $\MC{IC}(\I)$ (so that $R[\MC{IC}(\I)]=\overline{R[\I]}$)  of a filtration $\I$ is strictly smaller than $\K(\I)$ (or equivalently $\overline{R[\I]}$ is strictly smaller than $R[\K(\I)]$). For adic-filtrations, however, $\MC{IC}(\I) = \K(\I)$, by Lemma \ref{LemId}, where $\I = \{I^m\}_{m \in \NN}$ for an ideal $I$ of $R$.
\end{Example}

\begin{proof} We consider filtrations $\I = \{I_m\}_{m \in \NN}$ where $I_0 = R$ and $I_m = (x^{m+c})$ for some fixed $c \in \ZZ_{>0}$ and $\J = \{J_m\}_{m \in \NN}$ where $J_0 = R$ and $J_m = (x^m)$ for $m \in \ZZ_{>0}$ in $R = k[[x]]$, a power series ring in the variable $x$ over a field $k$. We showed in Example \ref{ex26} that $\overline{\nu}_{\I}=\overline{\nu}_{\J}$. Thus $\mathcal K(\mathcal I)=\mathcal K(\mathcal J)$. By a direct calculation, $\mathcal K(\mathcal J)=\mathcal J$.
 Thus $\K(\I) = \J$.
  However we have shown in Example \ref{ex26} that the integral closure $\overline{R[\mathcal I]}$ of the filtration $\I$ is a proper subset of $\overline{R[\J]} = R[\J]$.
\end{proof}

\begin{Theorem}\label{NewThm}
Suppose $\I$ and $\J$ are filtrations of a Noetherian ring $R$. Then $\I$ is projectively equivalent to $\J$ with $\overline{\nu}_{\I} = \a \hspace{0.02in} \overline{\nu}_{\J}$ if and only if $\K(\I^{(\a)}) = \K(\J)$.
\end{Theorem}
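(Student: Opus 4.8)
The plan is to reduce everything to the twisting identity of Theorem~\ref{thm23} together with the uniqueness clause of Theorem~\ref{thm31}, after which no real work remains. The central observation is that, by Theorem~\ref{thm23}, $\overline{\nu}_{\I} = \a\,\overline{\nu}_{\I^{(\a)}}$ as functions $R \to \RR_{\ge 0}\cup\{\infty\}$. Since $\a$ is a fixed positive \emph{real} number, the scaling map $c \mapsto \a c$ is a bijection of $\RR_{\ge 0}\cup\{\infty\}$ (sending $\infty$ to $\infty$), so for any filtration $\J$ the equality $\overline{\nu}_{\I} = \a\,\overline{\nu}_{\J}$ holds if and only if $\overline{\nu}_{\I^{(\a)}} = \overline{\nu}_{\J}$. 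I would state this equivalence first, being explicit that it is valid pointwise even where the values are $\infty$, so that the rest of the argument is purely formal.

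For the forward direction, I would assume $\I$ and $\J$ are projectively equivalent with $\overline{\nu}_{\I} = \a\,\overline{\nu}_{\J}$. By the observation above, $\overline{\nu}_{\I^{(\a)}} = \overline{\nu}_{\J}$. Now apply Theorem~\ref{thm31} twice: $\K(\I^{(\a)})$ is \emph{the} unique largest filtration whose asymptotic Samuel function equals $\overline{\nu}_{\I^{(\a)}}$, and $\K(\J)$ is the unique largest filtration whose asymptotic Samuel function equals $\overline{\nu}_{\J}$. Since $\overline{\nu}_{\I^{(\a)}} = \overline{\nu}_{\J}$, the two uniqueness statements refer to the same object, hence $\K(\I^{(\a)}) = \K(\J)$.

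For the converse, I would assume $\K(\I^{(\a)}) = \K(\J)$. By the ``moreover'' part of Theorem~\ref{thm31}, $\overline{\nu}_{\I^{(\a)}} = \overline{\nu}_{\K(\I^{(\a)})}$ and $\overline{\nu}_{\J} = \overline{\nu}_{\K(\J)}$; combining these with the hypothesis gives $\overline{\nu}_{\I^{(\a)}} = \overline{\nu}_{\J}$. Multiplying by $\a$ and invoking Theorem~\ref{thm23} once more yields $\overline{\nu}_{\I} = \a\,\overline{\nu}_{\I^{(\a)}} = \a\,\overline{\nu}_{\J}$, so $\I$ and $\J$ are projectively equivalent with the stated scaling constant.

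There is no substantive obstacle here; the only point requiring care is the bookkeeping with the value $\infty$ in the scaling step, and making sure to quote the uniqueness (not merely existence) in Theorem~\ref{thm31} so that equality of the asymptotic Samuel functions forces equality of the filtrations $\K(\I^{(\a)})$ and $\K(\J)$ rather than just mutual containment.
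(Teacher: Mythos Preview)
Your proof is correct and follows essentially the same route as the paper: reduce via Theorem~\ref{thm23} to the equivalence $\overline{\nu}_{\I^{(\a)}} = \overline{\nu}_{\J}$, and then invoke Theorem~\ref{thm31} to pass between equality of asymptotic Samuel functions and equality of the associated $\K$-filtrations. The paper's version is simply a one-line compression of what you wrote; your added remarks about the $\infty$ value and the uniqueness clause are sound but not strictly needed, since the definition $K(\I)_m=\{f:\overline{\nu}_{\I}(f)\ge m\}$ already makes $\K(\cdot)$ depend only on the function $\overline{\nu}_{(\cdot)}$.
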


\begin{proof}
$\overline{\nu}_{\I} = \alpha \hspace{0.02in} \overline{\nu}_{\J}$ if and only if $\overline{\nu}_{\I^{(\a)}} =  \overline{\nu}_{\J}$ (by Theorem \ref{thm23}), which holds if and only if $\K(\I^{(\a)}) = \K(\J)$ by Theorem \ref{thm31}.
\end{proof}

\begin{Lemma}\label{KIntcl}
For a filtration $\I$ and the corresponding filtration $\K(\I)$ (as defined in Theorem \ref{thm31}) in a Noetherian ring $R$, the Rees algebra $R[\K(\I)]$ is integrally closed in $R[t]$.
\end{Lemma}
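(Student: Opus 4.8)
The plan is to reduce the statement to the identity of filtrations $\mathcal{IC}(\K(\I)) = \K(\I)$, where $\mathcal{IC}(-)$ is the integral-closure operation on filtrations introduced in Lemma \ref{lem19}. Indeed, by Lemma \ref{lem19} we have $\overline{R[\K(\I)]} = R[\mathcal{IC}(\K(\I))]$, so $R[\K(\I)]$ is integrally closed in $R[t]$ if and only if $\mathcal{IC}(\K(\I)) = \K(\I)$. In particular, Lemma \ref{lem19} also guarantees that $\mathcal{IC}(\K(\I))$ is itself a filtration of $R$, a point we will need below.

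One inclusion is automatic. Writing $\mathcal{IC}(\K(\I)) = \{J_m\}_{m \in \NN}$ with $J_m = \{f \in R \mid f^r \in \overline{K(\I)_{rm}} \text{ for some } r > 0\}$, the choice $r = 1$ gives $K(\I)_m \subseteq \overline{K(\I)_m} \subseteq J_m$, so $\K(\I) \subseteq \mathcal{IC}(\K(\I))$. (This is just the general fact $\I \subseteq \mathcal{IC}(\I)$.)

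For the reverse inclusion I would combine two facts already established. First, applying Theorem \ref{finase} to the filtration $\K(\I)$ gives $\overline{\nu}_{\mathcal{IC}(\K(\I))} = \overline{\nu}_{\K(\I)}$, and Theorem \ref{thm31} gives $\overline{\nu}_{\K(\I)} = \overline{\nu}_{\I}$; hence $\overline{\nu}_{\mathcal{IC}(\K(\I))} = \overline{\nu}_{\I}$. Second, Theorem \ref{thm31} asserts that $\K(\I)$ is the \emph{unique largest} filtration $\J$ with $\overline{\nu}_{\I} = \overline{\nu}_{\J}$. Since $\mathcal{IC}(\K(\I))$ is a filtration with exactly this property, maximality forces $\mathcal{IC}(\K(\I)) \subseteq \K(\I)$. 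Together with the previous paragraph, $\mathcal{IC}(\K(\I)) = \K(\I)$, and therefore $\overline{R[\K(\I)]} = R[\mathcal{IC}(\K(\I))] = R[\K(\I)]$, which is the claim.

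There is essentially no serious obstacle: the argument is a short formal consequence of Lemma \ref{lem19}, Theorem \ref{finase}, and the maximality clause of Theorem \ref{thm31}. The only subtlety worth flagging explicitly is the legitimacy of applying the maximality of $\K(\I)$ to $\mathcal{IC}(\K(\I))$, which is valid precisely because Lemma \ref{lem19} certifies that $\mathcal{IC}(\K(\I))$ is a bona fide filtration.
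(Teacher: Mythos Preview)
Your argument is correct, and it takes a genuinely different route from the paper's own proof. The paper argues directly at the level of integral equations: it takes a homogeneous element $ft^n \in R[t]$ integral over $R[\K(\I)]$, writes out the monic relation with coefficients $a_it^{ni}$ where $a_i \in K(\I)_{ni}$, and then uses the valuation-like inequalities $\overline{\nu}_{\I}(a_if^{d-i}) \ge ni + (d-i)\overline{\nu}_{\I}(f)$ together with $\overline{\nu}_{\I}(f^d) = d\,\overline{\nu}_{\I}(f)$ to force $\overline{\nu}_{\I}(f) \ge n$, hence $ft^n \in R[\K(\I)]$. Your proof, by contrast, never touches an integral equation; it is a purely formal deduction from the equality $\overline{\nu}_{\mathcal{IC}(\K(\I))} = \overline{\nu}_{\K(\I)} = \overline{\nu}_{\I}$ (Theorems~\ref{finase} and~\ref{thm31}) and the maximality clause of Theorem~\ref{thm31}. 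What your approach buys is brevity and a clear conceptual explanation of \emph{why} $R[\K(\I)]$ is integrally closed: namely, $\K(\I)$ is already maximal among filtrations with the same Samuel function, so there is no room for the integral-closure operation to enlarge it. What the paper's approach buys is independence from Theorem~\ref{finase}; it gives a self-contained computation using only the basic properties of $\overline{\nu}_{\I}$ from Proposition~\ref{prop14}.
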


\begin{proof}
It suffices to prove the result for homogeneous elements in $R[t]$. Let $ft^n$ be a homogeneous element in $R[t]$ that is integral over $R[\K(\I)]$. Suppose $ft^n$ satisfies the following homogeneous equation of degree $d > 0$:
$$(ft^n)^d + a_1 t^n (ft^n)^{d-1} +  \cdots + a_i t^{ni} (ft^n)^{d-i} + \cdots + a_{d-1}t^{n(d-1)} (ft^n) + a_d t^{nd} = 0 $$
where $a_i \in K(\I)_{ni}$, that is, $\overline{\nu}_{\I}(a_i) \geq ni$ $ \forall$ $ 1 \leq i \leq d$. That gives
$$f^d + a_1 f^{d-1} + \cdots + a_i f^{d-i} + \cdots + a_{d-1} f + b_d = 0.$$
If $ \hspace{0.05in} \overline{\nu}_{\I}(f) < n$, then the above equation is not possible since $\overline{\nu}_{\I}(f^d) = d \hspace{0.03in} \overline{\nu}_{\I}(f)$ but $\overline{\nu}_{\I}(a_i f^{d-i}) > d \hspace{0.02in} \overline{\nu}_{\I}(f)$ $ \forall $ $ 1 \leq i \leq d$ since 
$$
\overline{\nu}_{\I}(a_i f^{d-i}) \geq \overline{\nu}_{\I}(a_i) + (d-i) \hspace{0.03in}\overline{\nu}_{\I}(f) > ni + d \hspace{0.02in} \overline{\nu}_{\I}(f) - ni = d\overline{\nu}_{\I}(f).
$$
 If $\overline{\nu}_{\I}(f) \geq n$, then $ft^n \in R[\K(\I)]$, thus, proving that $R[\K(\I)]$ is integrally closed in $R[t]$.
\end{proof}

\section{Discrete valued filtrations}\label{DVF}

Let $R$ be a Noetherian ring. Let $P$ be a minimal prime ideal of $R$ and let $v$ be a valuation of the quotient field $\kappa(P)$ of $R/P$ which is nonnegative on $R/P$. Let 
$$
\Gamma_v=\{v(f)\mid f\in \kappa(P)^{\times}\}
$$
 be the value group of $v$ and $\MC{O}_v$ be the valuation ring of $v$ with maximal ideal $m_v$. Note that $R/P \subseteq \MC{O}_{v}$. Let $\pi: R \to R/P$ be the natural surjection. We define a map $\tilde{v} : R \to \Gamma_v \cup \{\infty\}$ by
$$
\tilde{v}(r) =
    \begin{cases}
        v({\pi(r)}) & \text{if } r \notin P\\
       \hspace{0.25in} \infty & \text{if } r \in P.
    \end{cases} 
$$
We extend the order on $\Gamma_v$ to  $\Gamma_v \cup \{\infty\}$ by requiring that 
$\infty$ has order larger than all elements of $\Gamma_v$ and $\infty + \infty = g + \infty = \infty \hspace{0.1in} \forall \hspace{0.1in} g \in \Gamma_v$.

The map $\tilde{v}$ satisfies the following properties:
$$\tilde{v}(r \cdot s) = \tilde{v}(r) + \tilde{v}(s), \hspace{0.35in} \tilde{v}(r + s) \geq \min\{\tilde{v}(r),\tilde{v}(s)\}, \hspace{0.35in} \tilde{v}^{-1}(\infty) = P.$$

We will  call $\tilde v$ a \textbf{valuation} on $R$. By abuse of notation, we will denote $\tilde{v}$ by $v$. 
If $v$ is a discrete valuation of rank 1, we will say that $v$ is a \textbf{discrete valuation} of $R$. We can then naturally identify $\Gamma_{v}$ with $\ZZ$, by identifying the element of $\Gamma_{v}$ with least positive value with $1 \in \ZZ$.

We define two valuations $v$ and $\omega$ of $R$ to be \textbf{equivalent} if $v^{-1}(\infty) = \omega^{-1}(\infty)$ and the valuations $v$ and $\omega$ on $\kappa(P)$, where $P=v^{-1}(\infty) = \omega^{-1}(\infty)$, are equivalent. In particular, since we have identified the value groups with $\ZZ$, if $v$ and $\omega$ are rank 1 discrete valuations, then  $v$ and $\omega$ are equivalent if and only if they are equal, that is, $v=\omega$.

\vspace{0.05in}

Suppose that $v$ is a discrete valuation of $R$. For $m\in \NN$, define \textbf{valuation ideals}
$$
I(v)_m=\{f\in R \| v(f)\ge m\} = \pi^{-1} \left(m_{v}^m\cap R/P\right).
$$
 An \textbf{integral  discrete valued filtration} of $R$  is a filtration $\I=\{I_m\}_{m \in \NN}$ such that  there exist discrete valuations $v_1,\ldots,v_s$ of $R$ and $a_1,\ldots,a_s\in \ZZ_{>0}$ such that 
$$
I_m = I(v_1)_{ma_1} \cap \cdots \cap I(v_s)_{ma_s} \hspace{0.05in} \forall \hspace{0.05in} m \in \NN.
$$
$\I$ is called an \textbf{$\RR$-discrete valued filtration} if $a_1, \ldots, a_s \in \RR_{>0}$ and $\I$ is called a \textbf{$\QQ$-discrete valued filtration} if $a_1, \ldots, a_s \in \QQ_{>0}$. If $a_i\in \RR_{>0}$, then
$$
I(v_i)_{ma_i}=\{f\in R \hspace{0.02in} \mid \hspace{0.02in} v_i(f)\ge ma_i\}=I(v_i)_{\lceil ma_i\rceil}.
$$
 We also call an $\RR$-discrete valued filtration a \textbf{discrete valued filtration}. If the discrete valuations $v_i$ are divisorial valuations of $\kappa(P_i)$, where $P_i$ are minimal primes of $R$,  then $\I$ is called a \textbf{divisorial filtration} of $R$.

\begin{Definition} Let $\mathcal I=\{I_m\}_{m \in \NN}$ be a discrete valued filtration, which is represented as
\begin{equation}\label{eqDVI}
I_m = I(v_1)_{ma_1} \cap \cdots \cap I(v_s)_{ma_s} \hspace{0.05in} \forall \hspace{0.05in} m \in \NN.
\end{equation}
If for each $i \in \{1, \ldots, s\}$, the representation (\ref{eqDVI}) of $I_m$ is not valid for some $m$  when the term $I(\nu_i)_{a_im}$ is removed from $I_m$  then the representation of (\ref{eqDVI}) is said to be \textbf{irredundant}.
\end{Definition}

\begin{Lemma}\label{lem30}
Let $\I=\{I_m\}_{m \in \NN}$ where $I_m = I(v_1)_{ma_1} \cap \cdots \cap I(v_s)_{ma_s}$
 be a discrete valued filtration of a Noetherian ring $R$. For $f \in R \setminus \{0\}$,
$$\nu_{\I}(f) = \min \bigg\{ \bigg\lfloor \dfrac{v_{1}(f)}{a_1}\bigg\rfloor, \cdots, \bigg\lfloor \dfrac{v_{s}(f)}{a_s}\bigg\rfloor  \bigg\} \hspace{0.1in} \text{ and } \hspace{0.1in} \overline{\nu}_{\I}(f) = \min \bigg\{  \dfrac{v_{1}(f)}{a_1}, \cdots, \dfrac{v_{s}(f)}{a_s}\bigg\}.$$
\end{Lemma}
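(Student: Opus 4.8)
The statement naturally splits into two parts: first computing the order function $\nu_{\I}(f)$, and then computing the asymptotic Samuel function $\overline{\nu}_{\I}(f)$ from it. I would begin with the order function, which is essentially a direct unwinding of the definitions. By definition, $\nu_{\I}(f) = \sup\{m \mid f \in I_m\}$, and $f \in I_m$ means $f \in I(v_i)_{ma_i}$ for all $i$, i.e., $v_i(f) \geq ma_i$ for all $i$, i.e., $m \leq v_i(f)/a_i$ for all $i$. Since $m$ ranges over $\NN$, the largest such $m$ is $\min_i \lfloor v_i(f)/a_i \rfloor$ (using that each $v_i(f)/a_i \geq 0$, and noting the edge case where this minimum could be handled correctly when some $v_i(f) = \infty$, i.e. $f$ lies in the corresponding minimal prime — then that term simply does not constrain the minimum). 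This gives the first formula. One subtlety to address: if $f = 0$ then $\nu_{\I}(f) = \infty$, which is why $f \neq 0$ is assumed; I should note that the formula still reads correctly if all $v_i(f) = \infty$.

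For the asymptotic Samuel function, I would apply the first formula to $f^n$ and use $v_i(f^n) = n\, v_i(f)$ to get
$$
\frac{\nu_{\I}(f^n)}{n} = \frac{1}{n}\min_i \left\lfloor \frac{n\, v_i(f)}{a_i} \right\rfloor.
$$
Then I would use the elementary bound $x - 1 < \lfloor x \rfloor \leq x$ to sandwich each term: $\frac{n v_i(f)}{a_i} - 1 < \lfloor \frac{n v_i(f)}{a_i}\rfloor \leq \frac{n v_i(f)}{a_i}$. Dividing by $n$ and taking the minimum over $i$ (the minimum of the lower bounds is at least $\min_i \frac{v_i(f)}{a_i} - \frac{1}{n}$, and the minimum of the upper bounds is exactly $\min_i \frac{v_i(f)}{a_i}$), I obtain
$$
\min_i \frac{v_i(f)}{a_i} - \frac{1}{n} \;<\; \frac{\nu_{\I}(f^n)}{n} \;\leq\; \min_i \frac{v_i(f)}{a_i}.
$$
Letting $n \to \infty$ and invoking Theorem \ref{LimExist} to know the limit exists, the squeeze gives $\overline{\nu}_{\I}(f) = \min_i \frac{v_i(f)}{a_i}$.

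I do not anticipate a serious obstacle here; the main things to be careful about are the bookkeeping around the $a_i \in \RR_{>0}$ case (where $I(v_i)_{ma_i}$ means $I(v_i)_{\lceil ma_i\rceil}$, so the condition $v_i(f) \geq \lceil ma_i \rceil$ is equivalent to $v_i(f) \geq ma_i$ since $v_i(f) \in \ZZ$, and to $m \leq v_i(f)/a_i$) and the handling of terms where $v_i(f) = \infty$. In the worst case, if $v_i(f) = \infty$ for every $i$ — equivalently $f$ lies in every minimal prime appearing — then $\nu_{\I}(f) = \infty$ and both sides of both formulas are $\infty$, so the statement holds with the convention that $\min$ of a set containing only $\infty$ is $\infty$; but since $f \neq 0$ and in the cases of interest not all $v_i$ vanish identically, the generic situation has at least one finite term and the argument above applies verbatim.
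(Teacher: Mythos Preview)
Your proposal is correct and follows essentially the same approach as the paper: unwind the definition of $I_m$ to get the formula for $\nu_{\I}(f)$, then apply it to $f^n$ and use the sandwich $x-1<\lfloor x\rfloor\le x$ together with the squeeze theorem to obtain $\overline{\nu}_{\I}(f)$. Your extra care with the $a_i\in\RR_{>0}$ bookkeeping and the $v_i(f)=\infty$ edge cases is welcome but not strictly needed beyond what the paper does.
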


\begin{proof}
Let $\phi_{\I}(f) \coloneqq \min\limits_{1 \leq i \leq s} \bigg\{ \bigg\lfloor \dfrac{v_{i}(f)}{a_i}\bigg\rfloor \bigg\}$ and $ \overline{\phi}_{\I}(f) \coloneqq \min\limits_{1 \leq i \leq s} \bigg\{  \dfrac{v_{i}(f)}{a_i}\bigg\}$. Let $f \in R \setminus \{0\}$.

Since $f \in I_{\nu_{\I}(f)} = \bigcap\limits_{i=1}^s I(v_i)_{a_i \nu_{\I}(f)}$, $v_i(f) \geq a_i \nu_{\I}(f)$, which implies, $\nu_{\I}(f) \leq \bigg\lfloor \dfrac{v_i(f)}{a_i} \bigg\rfloor$ $ \forall $ $1 \leq i \leq s$. This shows that $\nu_{\I}(f) \leq \phi_{\I}(f)$.

For each $i \in \{1, \ldots, s\}$, $\phi_{\I}(f) \leq \bigg\lfloor \dfrac{v_{i}(f)}{a_i}\bigg\rfloor$, which implies $\phi_{\I}(f) \leq  \dfrac{v_{i}(f)}{a_i}$, that is, $v_i(f) \geq a_i \phi_{\I}(f)$.
Thus, $f \in \bigcap\limits_{i=1}^s I(v_i)_{a_i \phi_{\I}(f)} = I_{\phi_{\I}(f)}.$ This implies $\nu_{\I}(f) \geq \phi_{\I}(f)$, proving $\nu_{\I}(f) = \phi_{\I}(f)$. Now
$$\overline{\nu}_{\I}(f) = \lim\limits_{n \to \infty} \dfrac{\nu_{\I}(f^n)}{n} = \lim\limits_{n \to \infty} \dfrac{\min\limits_{1 \leq i \leq s} \bigg\{ \bigg\lfloor \dfrac{v_{i}(f^n)}{a_i}\bigg\rfloor \bigg\}}{n} = \lim\limits_{n \to \infty} \dfrac{\min\limits_{1 \leq i \leq s} \bigg\{ \bigg\lfloor \dfrac{n v_{i}(f)}{a_i}\bigg\rfloor \bigg\}}{n}$$

\vspace{0.02in}

Since $x-1 < \lfloor x \rfloor \leq x$ for any $x \in \RR$, $ \forall $ $n \in \ZZ_{>0}$ we have that

$$\dfrac{\min\limits_{1 \leq i \leq s} \bigg\{ \dfrac{n v_{i}(f)}{a_i} - 1 \bigg\}}{n} \leq \dfrac{\min\limits_{1 \leq i \leq s} \bigg\{ \bigg\lfloor \dfrac{n v_{i}(f)}{a_i}\bigg\rfloor \bigg\}}{n} \leq \dfrac{\min\limits_{1 \leq i \leq s} \bigg\{ \dfrac{n v_{i}(f)}{a_i} \bigg\}}{n}$$

Taking limits as $n \to \infty$, we get, $\overline{\nu}_{\I}(f) = \overline{\phi}_{\I}(f)$.
\end{proof}

\begin{Corollary}\label{cor36}
Let $\I$ be a discrete valued filtration of a Noetherian ring $R$. Then  $\K(\I) = \I$.
\end{Corollary}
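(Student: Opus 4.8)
The plan is to unwind both sides of the claimed equality at each level $m$ and match them using the explicit formula for $\overline{\nu}_{\I}$ provided by Lemma \ref{lem30}. Write $\I = \{I_m\}$ with $I_m = I(v_1)_{ma_1}\cap\cdots\cap I(v_s)_{ma_s}$, and recall that by definition $K(\I)_m = \{f\in R \mid \overline{\nu}_{\I}(f)\ge m\}$, so it suffices to show $I_m = \{f\in R\mid \overline{\nu}_{\I}(f)\ge m\}$ for every $m\in\NN$.

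The inclusion $\I\subseteq\K(\I)$ is already given by Theorem \ref{thm31} (and is also immediate: if $f\in I_n$ then $\overline{\nu}_{\I}(f)\ge n$), so only $K(\I)_m\subseteq I_m$ needs argument. First I would dispose of the trivial case $f=0$, which lies in every $I_m$. For $f\in R\setminus\{0\}$ with $\overline{\nu}_{\I}(f)\ge m$, Lemma \ref{lem30} gives $\min_{1\le i\le s}\{v_i(f)/a_i\} = \overline{\nu}_{\I}(f)\ge m$, hence $v_i(f)\ge ma_i$ for each $i$, i.e. $f\in I(v_i)_{ma_i}$ for all $i$, and therefore $f\in \bigcap_{i=1}^s I(v_i)_{ma_i} = I_m$. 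This establishes $K(\I)_m\subseteq I_m$ and hence $\K(\I)=\I$.

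There is essentially no obstacle here: the entire content is packaged into Lemma \ref{lem30}, which converts the \emph{a priori} asymptotically-defined object $\overline{\nu}_{\I}$ into the exact minimum of the rescaled valuations $v_i(f)/a_i$; once that is available, the membership condition $\overline{\nu}_{\I}(f)\ge m$ is literally the conjunction of the defining conditions $v_i(f)\ge ma_i$ of $I_m$. The only point requiring a word of care is that Lemma \ref{lem30} is stated for nonzero elements, so the zero element must be checked separately (trivially), and one should note that the argument is uniform in $m$, so no compatibility across different $m$ needs to be verified beyond what is already in Lemma \ref{lem30}.
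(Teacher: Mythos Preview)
Your proof is correct and follows essentially the same approach as the paper: both arguments invoke Lemma~\ref{lem30} to translate the condition $\overline{\nu}_{\I}(f)\ge m$ into $v_i(f)\ge a_i m$ for all $i$, which is exactly membership in $I_m$. The paper's version is slightly terser (it states the biconditional directly rather than splitting into two inclusions and citing Theorem~\ref{thm31}), but the content is identical.
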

\begin{proof}
Represent $\I = \{I_m\}_{m \in \NN}$ by $I_m = \bigcap\limits_{i=1}^s I(v_i)_{ma_i}$.
By Lemma \ref{lem30}, for any nonzero $f \in R$, $\overline{\nu}_{\I}(f) \geq m$ if and only if $ \dfrac{v_i(f)}{a_i} \geq m$, or, $v_i(f) \geq a_i m$ $ \forall \hspace{0.1in} 1 \leq i \leq s$. This is equivalent to $f \in I(v_i)_{a_i m} \hspace{0.1in} \forall \hspace{0.1in} 1 \leq i \leq s$, or that, $f \in I_m$.
\end{proof}

\begin{Lemma}\label{lem38}
If $\tilde{v}$ and $\tilde{v}'$ are discrete valuations of $R$ and $a, b \in \RR_{>0}$ are such that $\dfrac{\tilde{v}}{a} = \dfrac{\tilde{v}'}{b}$ (as functions of $R$), then $\tilde{v} = \tilde{v}'$ and $a = b$.
\end{Lemma}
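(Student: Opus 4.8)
The plan is to exploit the two defining features of a discrete valuation of $R$ (as set up in this section): the associated prime $P = v^{-1}(\infty)$, and the value-group normalization that identifies $\Gamma_v$ with $\ZZ$ via the least positive value. The hypothesis $\frac{\tilde v}{a} = \frac{\tilde v'}{b}$ as functions $R \to \RR \cup \{\infty\}$ should first be used at the level of supports, then at the level of actual values.

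First I would recover the primes: since $\frac{\tilde v(r)}{a} = \frac{\tilde v'(r)}{b}$ for all $r$, and $a,b$ are finite positive reals, $\tilde v(r) = \infty$ if and only if $\tilde v'(r) = \infty$. Hence $P := \tilde v^{-1}(\infty) = (\tilde v')^{-1}(\infty) =: P'$, so both $v$ and $v'$ are valuations of the same residue field $\kappa(P)$ (nonnegative on $R/P$), and it suffices to compare them there. Next I would pin down the ratio $a/b$. For any $r \in R \setminus P$ we have $\tilde v(r) = \frac{a}{b}\tilde v'(r)$. Choose $r$ with $\tilde v'(r)$ equal to the least positive value of $v'$ on $R/P$; under our normalization this is $\tilde v'(r) = 1$, so $\tilde v(r) = \frac{a}{b}$. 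But $\tilde v(r) \in \Gamma_v = \ZZ_{\ge 0}$ (and is positive, hence $\ge 1$), so $\frac{a}{b} \in \ZZ_{>0}$; symmetrically $\frac{b}{a} \in \ZZ_{>0}$, forcing $\frac{a}{b} = 1$, i.e. $a = b$.

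Given $a = b$, the hypothesis collapses to $\tilde v = \tilde v'$ as functions on $R$, and in particular on $R/P$. Now I would argue that two valuations of $\kappa(P)$ that agree on the subring $R/P$ must be equal: since $\kappa(P)$ is the fraction field of the domain $R/P$, every element is $\pi(x)/\pi(y)$ with $x,y \in R$, $y \notin P$, and $v(\pi(x)/\pi(y)) = \tilde v(x) - \tilde v(y) = \tilde v'(x) - \tilde v'(y) = v'(\pi(x)/\pi(y))$, so $v = v'$ on all of $\kappa(P)^\times$. Combined with $P = P'$, this says precisely that $\tilde v$ and $\tilde v'$ are equivalent in the sense defined above, and since both are rank $1$ discrete (normalized identically), equivalence means equality: $\tilde v = \tilde v'$.

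The only genuinely delicate point is the argument that $a/b$ must be an integer, which rests on the normalization convention: one needs to know that the least positive value of a discrete valuation $\tilde v$ on $R$ (equivalently on $R/P$) really is $1$ under our identification $\Gamma_v \cong \ZZ$, i.e. that the value semigroup $\{\tilde v(r) : r \in R \setminus P\}$ generates (and in fact, after the normalization, equals) $\NN$ — this is where the hypothesis that $v$ is a valuation \emph{of} $R$ (nonnegative on $R/P$ with $\kappa(P)$ as fraction field) is essential. Everything else is bookkeeping with the properties $\tilde v(rs) = \tilde v(r)+\tilde v(s)$ and $\tilde v^{-1}(\infty) = P$ recorded at the start of the section.
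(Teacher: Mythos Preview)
Your overall strategy matches the paper's, but there is one genuine gap in the step you yourself flag as delicate. You claim that the value semigroup $\{\tilde v'(r) : r \in R \setminus P\}$ equals $\NN$, so that some $r \in R$ has $\tilde v'(r) = 1$. This need not hold: take $R = k[t^2,t^3]$, $P = (0)$, and $v'$ the $t$-adic valuation on $\kappa(P) = k(t)$; then the value semigroup on $R$ is $\{0,2,3,4,\ldots\}$, missing $1$. What \emph{is} forced by $\kappa(P)$ being the fraction field of $R/P$ is only that the differences $\tilde v'(r) - \tilde v'(s)$ with $r,s \in R \setminus P$ fill out all of $\ZZ$.

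The paper's proof sidesteps this by reversing the order of your last two steps: it first extends the identity $b\,v = a\,v'$ from $R/P$ to all of $\kappa(P)$ via your fraction argument $v(\pi(f)/\pi(g)) = \tilde v(f) - \tilde v(g)$, and only then chooses $x,y \in \kappa(P)^{\times}$ (not in $R$) with $v(x) = 1$ and $v'(y) = 1$. From $b = b\,v(x) = a\,v'(x)$ and $a = b\,v(y)$ one gets $b/a = v'(x) \in \ZZ_{>0}$ and $a/b = v(y) \in \ZZ_{>0}$, hence $a = b$ and then $\tilde v = \tilde v'$. Your argument is easily repaired the same way: just move the extension to $\kappa(P)$ before the choice of a value-$1$ element.
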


\begin{proof}
Since $b \tilde{v} = a \tilde{v}'$, $\tilde{v}^{-1}(\infty) = \tilde{v}'^{-1}(\infty)$ is a common minimal prime $P$ of $R$. Thus, $\tilde{v}$ and $\tilde{v}'$ are induced by discrete valuations $v$ and $v'$ on $\kappa(P)$. Let $\pi : R \to R/P$ be the natural surjection. Suppose $\a \in \kappa(P)$ is nonzero, that is, $\a = \dfrac{\pi(f)}{\pi(g)}$ for some $f, g \in R \setminus P$. Then,
$$v(\a) = v(\pi(f)) - v(\pi(g)) = \tilde{v}(f) - \tilde{v}(g) = \dfrac{a}{b} (\tilde{v}'(f) - \tilde{v}'(g))$$
$$= \dfrac{a}{b}(v'(\pi(f)) - v'(\pi(g))) = \dfrac{a}{b} v'(\a)$$
This shows $b v = a v'$ as function of $\kappa(P)^{\times}$. Since the value groups of $v$ and $v'$ are $\ZZ$, $ \exists $ $ x, y \in \kappa(P)^{\times}$ such that $v(x) = 1$ and $v'(y) = 1$. Since $b v = a v'$,  $b = a v'(x)$ and $b v(y) = a$. This implies $a \hspace{0.01in} | \hspace{0.01in} b$ and $b \hspace{0.01in} | \hspace{0.01in} a$. Thus, $a = b$ and hence, $\tilde{v} = \tilde{v}'$.

\end{proof}

In the following theorem,  we generalize to discrete valued  filtrations the proof of uniqueness of Rees valuations for ideals given in Theorem 10.1.6 \cite{HS}.

\begin{Theorem}\label{Theorem1} Let $v_1,\ldots,v_s$ be discrete valuations of a Noetherian ring $R$. Let $a_1,\ldots,a_s\in \RR_{>0}$, and define $\omega:R\setminus \{0\}\rightarrow \RR_{\ge 0}$ by
\begin{equation}\label{eqn5}
    \omega(f)=\min \left\{\dfrac{v_1(f)}{a_1}, \cdots, \dfrac{v_s(f)}{a_s} \right\}
\end{equation}
for $f\in R$.
If no $\dfrac{v_i}{a_i}$ can be omitted from this expression, then the $v_i$ and $a_i$ are uniquely determined by the function $\omega$, up to reindexing of the $\dfrac{v_i}{a_i}$.
\end{Theorem}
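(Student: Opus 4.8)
The plan is to adapt the uniqueness argument for Rees valuations (Theorem 10.1.6 of \cite{HS}), using the description of $\omega$ as a minimum of normalized valuations together with Lemma \ref{lem38} to pin down each individual $v_i/a_i$. Suppose we have two irredundant representations
$$\omega(f)=\min\left\{\frac{v_1(f)}{a_1},\dots,\frac{v_s(f)}{a_s}\right\}=\min\left\{\frac{w_1(f)}{b_1},\dots,\frac{w_r(f)}{b_r}\right\}$$
for all $f\in R\setminus\{0\}$. The key step is to show that each normalized valuation $v_i/a_i$ on the left must coincide with some $w_j/b_j$ on the right. By symmetry this forces $r=s$ and, after reindexing, $v_i/a_i=w_i/b_i$ for all $i$; then Lemma \ref{lem38} upgrades this to $v_i=w_i$ and $a_i=b_i$, which is exactly the claimed conclusion.

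First I would fix an index $i$, say $i=1$, and use irredundancy: since $v_1/a_1$ cannot be omitted, there exists $g\in R\setminus\{0\}$ with $\frac{v_1(g)}{a_1}<\frac{v_k(g)}{a_k}$ for all $k\neq 1$ (if every $g$ achieving the minimum at index $1$ also achieved it at some other index, the term could be dropped — this needs a small argument, possibly choosing $g$ carefully or passing to a power and multiplying, to realize the strict inequality simultaneously for all $k\neq 1$). With such a $g$ in hand, $\omega(g)=\frac{v_1(g)}{a_1}$, so on the right side some $w_j/b_j$ realizes this value; fix that $j$. The goal is to show $\frac{v_1(f)}{a_1}=\frac{w_j(f)}{b_j}$ for all $f$. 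Here I would consider, for an arbitrary $f\in R\setminus\{0\}$, elements of the form $g^N f$ (or $g^N$ and $f$ combined multiplicatively) for large $N$: scaling by a high power of $g$ forces the minimum defining $\omega$ to be attained at the $v_1$-term on the left and at the $w_j$-term on the right, and additivity of valuations along $g^N f$ lets one extract $\frac{v_1(f)}{a_1}$ and $\frac{w_j(f)}{b_j}$ from $\omega(g^Nf)=\frac{v_1(g^Nf)}{a_1}=\frac{v_1(g^N)}{a_1}+\frac{v_1(f)}{a_1}$ and the analogous identity on the right. Comparing gives $\frac{v_1(f)}{a_1}=\frac{w_j(f)}{b_j}$ for every $f$ for which the relevant terms stay dominant; a further limiting or density argument (e.g. varying $N$, or arguing on $f^M g^N$ and dividing by $M$) removes the restriction and yields equality of the two functions on all of $R\setminus\{0\}$.

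The main obstacle I anticipate is the realization step: producing, from irredundancy, a single element $g$ that strictly separates the $v_1$-term from \emph{all} other terms simultaneously, and then ensuring that multiplying by $g^N$ genuinely forces both minima (left and right) onto the matched pair of terms for the element $g^N f$ uniformly in $f$. This is where the proof of Theorem 10.1.6 in \cite{HS} does its real work in the ideal case, and the filtration version should follow the same template once the normalized valuations $v_i/a_i$ play the role of the $\mathrm{ord}_{v_i}/v_i(I)$ there. Once the matching $v_1/a_1=w_j/b_j$ is established, the remainder is bookkeeping: irredundancy prevents two distinct left-hand terms from matching the same right-hand term (else one could be omitted), so the matching is a bijection, $r=s$, and Lemma \ref{lem38} finishes by separating the valuation from its scaling factor.
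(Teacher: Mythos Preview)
Your overall strategy---recover each normalized valuation by multiplying by high powers of a separator element $g$ and reading off $\omega(g^N f) - N\omega(g)$---is the right engine, and it is indeed what drives the paper's argument. The gap is exactly where you flag it. An element $g$ with $\frac{v_1(g)}{a_1} < \frac{v_k(g)}{a_k}$ for all $k\ne 1$ does exist by irredundancy of the left representation, and high powers of $g$ do force the left-hand minimum onto the $v_1$-term. But on the right you only know $\omega(g) = \min_j \frac{w_j(g)}{b_j}$, and several indices $j$ may achieve this minimum simultaneously. For large $N$ one obtains
$\omega(g^N f) = N\omega(g) + \min_{j\in J}\frac{w_j(f)}{b_j}$
where $J=\{j : \frac{w_j(g)}{b_j}=\omega(g)\}$, so the comparison yields only $\frac{v_1(f)}{a_1} = \min_{j\in J}\frac{w_j(f)}{b_j}$, not equality with a single $\frac{w_j}{b_j}$. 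There is no evident way to choose $g$ so that $|J|=1$, because $g$ was manufactured using only the left-hand data; and the handwave ``a further limiting or density argument removes the restriction'' does not address this, nor the case $v_1(f)=\infty$.

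The paper sidesteps the two-sided matching problem by working intrinsically rather than comparatively. It calls a subset $S\subseteq R$ \emph{$\omega$-consistent} if $\omega$ is additive on products of elements of $S$, and proves that the maximal $\omega$-consistent subsets are precisely the sets $\SS_i=\{f\in R : \omega(f)=\frac{v_i(f)}{a_i}\}$. Since this family depends only on $\omega$, any second irredundant representation must produce the same family of sets. One then picks a separator $x_i\in\SS_i\setminus\bigcup_{j\ne i}\SS_j$ and shows that $f\mapsto \omega(fx_i^d)-\omega(x_i^d)$ for $d\gg 0$ recovers $\frac{v_i}{a_i}$ on $R$ (with the locus $v_i(f)=\infty$ treated separately), after which Lemma~\ref{lem38} splits off $v_i$ and $a_i$. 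This is your multiplicative trick, but deployed only after the sets $\SS_i$ have been pinned down by $\omega$ alone, so no cross-representation matching is required.
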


\begin{proof}
We will say that the set $\{v_1,\ldots,v_s\}$ is irredundant if no $\dfrac{v_i}{a_i}$ can be removed from (\ref{eqn5}). 

If $s = 1$, the assertion follows from Lemma \ref{lem38}.
Let $s > 1$. We define $S \subseteq R$ to be $\omega$-consistent if for any $m \in \NN$ and $f_1, \ldots, f_m \in S$, $$\omega(f_1 \cdots f_m) = \sum\limits_{i=1}^m \omega(f_i).$$

For $f \in R$, let $S_f = \{f^m \| m \in \NN\}$. Then $S_f$ is $\omega$-consistent. 




Let $\mathfrak{F}$ be the set of all nonempty $\omega$-consistent subsets of $R$. Clearly $\mathfrak{F} \neq \emptyset$ since it contains the sets $S_f$ for any $f \in R$. Partially order $\mathfrak{F}$ by inclusion. Let $\{I_{\lambda}\}_{_{\lambda \in \Lambda}}$ be a chain of $\omega$-consistent subsets of $R$. Then $I = \bigcup\limits_{\lambda \in \Lambda} I_{\lambda}$ is an upper bound for this chain which is  $\omega$-consistent. Since every chain in $\mathfrak{F}$ has an upper bound in $\mathfrak{F}$, by Zorn's Lemma, $\mathfrak{F}$ has at least one maximal element. 

We provide an explicit description of all of the maximal $\omega$-consistent subsets of $R$.
For each $1 \leq i \leq s$, define the sets 
$$
\SS_i \coloneqq \bigg\{f \in R \hspace{0.1in} \big| \hspace{0.1in} \omega(f) = \dfrac{v_i(f)}{a_i}\bigg\}.
$$ 
Each $S_i$ is $\omega$-consistent and is nonempty since it contains 1.
By the irredundancy condition on the $\dfrac{v_i}{a_i}$, we have the following remark.
\begin{remark}\label{Rmk0}
For $i \neq j$, $\SS_i \nsubseteq \bigcup\limits_{j \neq i} \SS_j$.
\end{remark}






Since each $\SS_i$ is a $\omega$-consistent subset, it is contained in some maximal $\omega$-consistent subset of $R$. We show that any maximal $\omega$-consistent subset $\SS$ of $R$ is equal to one of the $\SS_i$.  Then by Remark \ref{Rmk0} it follows that $\{\SS_i \| 1 \leq i \leq s\}$ are the distinct maximal $\omega$-consistent subsets of $R$.

\vspace{0.05in}

\begin{remark}\label{Rmk1}
If $\omega(f) = \infty$ for some $f \in R$, then $v_i(f) = \infty$ $ \forall $ $1 \leq i \leq s$. Thus $f \in \SS_i$ $ \forall $ $1 \leq i \leq s$.
\end{remark} 

\vspace{0.05in}

Suppose $\SS$ is a maximal $\omega$-consistent subset of $R$, and $\SS \neq \SS_i$ for any $1 \leq i \leq s$. Then $ \exists$ $g_i \in \SS\setminus \SS_i$ $ \forall $ $1 \leq i \leq s$. Since $g_i \notin \SS_i$, $\omega(g_i) < \infty$ (by Remark \ref{Rmk1}) and $\omega(g_i) < \dfrac{v_i(g_i)}{a_i}$ $ \forall$ $1 \leq i \leq s$. Let $g = g_1 \ldots g_s$. Since $\SS$ is $\omega$-consistent, $\omega(g) = \sum\limits_{i=1}^s \omega(g_i)$, but $\sum\limits_{i=1}^s \omega(g_i) < \sum\limits_{i=1}^s \dfrac{v_j(g_i)}{a_j}=\dfrac{v_j(g)}{a_j}$
$ \forall$ $ 1 \leq j \leq s$ since $\omega(g_i) \leq \dfrac{v_k(g_i)}{a_k}$ $ \forall$ $ k \neq i$ and $\omega(g_i) < \dfrac{v_i(g_i)}{a_i}$ (when $k = i$). 
But this implies $\omega(g) < \min\limits_{1 \leq j \leq s} \bigg\{ \dfrac{v_j(g)}{a_j}\bigg\}$, contradicting the definition of $\omega$. Thus, every maximal $\omega$-consistent set $\SS$ equals to one of the sets $\{ \SS_i \| 1 \leq i \leq s\}$. 

\vspace{0.05in}

In order to prove the Theorem, we must recover the valuations $v_i$ and the numbers $a_i \in \RR_{>0}$ from the function $\omega$. Since each $\dfrac{v_i}{a_i}$ gives a distinct maximal set $\SS_i$ and $\{v_i \| 1 \leq i \leq s\}$ is irredundant,  the number of $v_i$, which is $s$,  equals the number of distinct maximal $\omega$-consistent subsets of $R$. Fix $i \in \{1, \ldots, s\}$. 

Let $c \in R$ be such that $v_i(c) < \infty$. By Remark \ref{Rmk0}, $\exists$ $x _i \in \SS_i \setminus \bigcup\limits_{j \neq i} \SS_j$. By this property, $\forall $ $ j \neq i$, $\dfrac{v_j(x_i)}{a_j} > \dfrac{v_i(x_i)}{a_i}$. For a sufficiently large $d \in \ZZ_{>0}$, and $ \forall $ $j \neq i$ we have that
$$\left(\dfrac{v_j(x_i)}{a_j} - \dfrac{v_i(x_i)}{a_i}\right) d > \dfrac{v_i(c)}{a_i}$$

This gives $\dfrac{v_j(x_i^d)}{a_j} > \dfrac{v_i(cx_i^d)}{a_i}$, which in turn implies $\dfrac{v_i(cx_i^d)}{a_i} < \dfrac{v_j(x_i^d)}{a_j} + \dfrac{v_j(c)}{a_j} = \dfrac{v_j(cx_i^d)}{a_j}$.  This shows that $\omega(cx_i^d) = \dfrac{v_i(cx_i^d)}{a_i}$. In other words, $cx_i^d \in \SS_i$.

What we have just shown is the following remark.

\begin{remark}\label{Rmk2}
 For $c \in R$ such that $v_i(c) < \infty$ and $x_i \in \SS_i \setminus \bigcup\limits_{j \neq i} \SS_j$, $ \exists$ $d \in \ZZ_{>0}$ such that $c x_i^d \in \SS_i$. Moreover, if $c x_i^d \in \SS_i$, then $cx_i^n \in \SS_i$ $ \forall$ $n \geq d$. 
\end{remark}

Let $S = \{a \in R \| a \notin \text{ any minimal prime ideal of } R\}$. $S$ is a multiplicatively closed set and for $a \in S$,  $v_j(a) < \infty$ $ \forall$ $1 \leq j \leq s$ since each $v_j$ is infinite only on some minimal prime ideal of $R$.  In particular, $v_i(a) < \infty$ $ \forall$ $a \in S$. 

\begin{remark}\label{Rmk3}
 The construction in Remark \ref{Rmk2} applies to every element in $S$.
\end{remark}

Consider the  ring $K = S^{-1}R$. We define a function $u_i : K \to \QQ \cup \{\infty\}$ as follows:

Let $\a = f/g \in K$. Since $g \in S$, by Remark \ref{Rmk2}, $\exists $ $ e \in \ZZ_{>0} $ such that $g x_i^e
\in \SS_i$. Now, if for some $d \in \ZZ_{>0}$, $f x_i^d \in \SS_i$, then by Remark \ref{Rmk2}, we can find a sufficiently large integer $n$ such that $fx_i^n, gx_i^n \in \SS_i$ and in that case we define $u_i\left(\dfrac{f}{g} \right) \coloneqq \omega(f x_i^n) - \omega(g x_i^n)$

Otherwise, if $f x_i^d \notin \SS_i$ $ \forall $ $ d \in \ZZ_{>0}$, then we define $u_i\left(\dfrac{f}{g}\right) \coloneqq \infty$. 

\begin{remark}\label{Rmk4}
If $\dfrac{f}{g}\in K$ and     $fx_i^d \in \SS_i$ for some $d \in \ZZ_{>0}$, then $u_i \left(\dfrac{f}{g}\right) = \dfrac{v_i(f) - v_i(g)}{a_i}$ since 
$\omega(fx_i^n) = \dfrac{v_i(fx_i^n)}{a_i} = \dfrac{v_i(f) + v_i(x_i^n)}{a_i}$ and $\omega(gx_i^n) = \dfrac{v_i(gx_i^n)}{a_i} = \dfrac{v_i(g) + v_i(x_i^n)}{a_i}$. Since $g \in S$, $v_i(g) < \infty$, so, $u_i \left(\dfrac{f}{g}\right) = \infty$ if and only if $v_i(f) = \infty$.
\end{remark}

\begin{remark}\label{Rmk5}
 If $fx_i^d \notin \SS_i$  $ \forall $ $ d \in \ZZ_{>0}$, then $\omega(f) < \infty = v_i(f)$. 
\end{remark}

The remark follows because if $\omega(f) = \infty$, by Remark \ref{Rmk1}, $f \in \SS_i$ which implies $fx_i^d \in \SS_i$ for every $d \in \ZZ_{>0}$ since $\SS_i$ is multiplicatively closed, but that contradicts our assumption. Hence, $\omega(f) < \infty$. If $v_i(f) < \infty$, by Remark \ref{Rmk2}, we can find $d \in \ZZ_{>0}$ such that $f x_i^d \in \SS_i$, contradicting our assumption again. Thus, $v_i(f) = \infty$.

\vspace{0.1in}

We need to show that $u_i$ is well-defined, that is, it does not depend on the choice of $n$, $x_i$, $f$, or $g$. It follows from Remark \ref{Rmk4} that $u_i$ does not depend on $n$. 

By the definition of $u_i$ and Remarks \ref{Rmk4} and \ref{Rmk5}, $u_i \left(\dfrac{f}{g}\right) = \infty$ if and only if $v_i(f) = \infty$. So $u_i \left(\dfrac{f}{g}\right)$ is independent of the choice of $x_i \in \SS_i \setminus \bigcup\limits_{j \neq i} \SS_j$ if $v_i(f) = \infty$.

Suppose $v_i(f) < \infty$. If $x_i \in \SS_i \setminus \bigcup\limits_{j \neq i} \SS_j$, then $fx_i^d \in \SS_i$ for $d \in \ZZ_{>0}$ by Remark \ref{Rmk2}, and thus $u_i\left(\dfrac{f}{g}\right) = \dfrac{v_i(f)-v_i(g)}{a_i}$, which is independent of the choice of $x_i \in \SS_i \setminus \bigcup\limits_{j \neq i} \SS_j$. Thus, $u_i$ is independent of the choice of $x_i$.

To prove that $u_i$ doesn't depend on our choice of $f$ and $g$, we will first show the following:
$$u_i\left(\dfrac{f}{g}\right) = u_i\left(\dfrac{cf}{cg}\right) \hspace{0.05in} \forall \hspace{0.05in} c \in S.$$

Let $c \in S$, then, by Remark \ref{Rmk3}, $\exists$ $e \in \ZZ_{>0}$ such that $c x_i^e \in \SS_i$.

If $\exists$ $d' \in \ZZ_{>0}$ such that $fx_i^{d'} \in \SS_i$, then, by Remark \ref{Rmk2} and \ref{Rmk3}, $fx_i^d, gx_i^d, \in \SS_i$ for some $d \in \ZZ_{>0}$, and thus, by Remark \ref{Rmk4}, $u_i\left(\dfrac{f}{g}\right) = \dfrac{v_i(f)-v_i(g)}{a_i}$. Since $fx_i^d, gx_i^d, cx_i^e \in \SS_i$ (which is $\omega$-consistent), $fx_i^d \cdot cx_i^e = fc x_i^{d+e}, gx_i^d \cdot cx_i^e = gc x_i^{d+e} \in \SS_i$. Thus, we have
$$u_i\left(\dfrac{cf}{cg}\right) = \omega(fcx_i^{d+e}) - \omega(gcx_i^{d+e}) =
\dfrac{v_i(fcx_i^{d+e})}{a_i} - \dfrac{v_i(gcx_i^{d+e})}{a_i} = \dfrac{v_i(f) - v_i(g)}{a_i}$$

This proves $u_i\left(\dfrac{cf}{cg}\right) = u_i\left(\dfrac{f}{g}\right)$, by Remark \ref{Rmk4}.

If $fx_i^d \notin \SS_i$ for any $d \in \ZZ_{>0}$, then $u_i\left(\dfrac{f}{g}\right) = \infty$. We show that $fcx_i^d \notin \SS_i$ for any $d \in \ZZ_{>0}$, proving that $u_i\left(\dfrac{cf}{cg}\right) = \infty$.

Since $cg \in S$, by Remark \ref{Rmk3}, $\exists$ $d \in \ZZ_{>0}$ such that $cgx_i^d \in \SS_i$.  If for some $e \in \ZZ_{>0}$, $fcx_i^e \in \SS_i$, then $\omega(fcx_i^e) = \dfrac{v_i(fcx_i^e)}{a_i} = \dfrac{v_i(f) + v_i(cx_i^e)}{a_i} = \infty$ (since $v_i(f) = \infty$, by Remark \ref{Rmk5}). That implies $v_j(fcx_i^e) = \infty$ $ \forall$ $1 \leq j \leq s$. Since $c \in S$, $v_j(c) < \infty$ $ \forall $ $1 \leq j \leq s$. Thus, $v_j(fcx_i^e) = v_j(fx_i^e) + v_j(c) = \infty$. This means $v_j(fx_i^e) = \infty$ $ \forall$ $1 \leq j \leq s$, or that, $\omega(fx_i^e) = \infty$ which implies $fx_i^e \in \SS_i$ (by Remark \ref{Rmk1}), contradicting our assumption. Hence, $u_i\left(\dfrac{f}{g}\right) = u_i\left(\dfrac{cf}{cg}\right) \hspace{0.05in} \forall \hspace{0.05in} c \in S.$

Suppose $\dfrac{f}{g} = \dfrac{f'}{g'}$ in $K$. Then, $\exists$ $c \in S$ such that $c (fg' - gf') = 0$. 

Since $c, g, g' \in S$ and $S$ is multiplicatively closed, $cg, cg' \in S$. Thus, we get
$$u_i\left(\dfrac{f'}{g'}\right) = u_i \left(\dfrac{cgf'}{cgg'}\right) = u_i\left(\dfrac{cfg'}{cgg'}\right) = u_i\left(\dfrac{f}{g}\right).$$

This proves that $u_i$ is well-defined. Since the sets $\SS_i$ are determined only by $\omega$ and each $u_i$ is determined by the set $\SS_i$ and the function $\omega$, we have that the functions $u_i$ are determined only by $\omega$. So, we have a well-defined function $u_i:K= S^{-1}R\rightarrow \QQ \cup \{\infty\}$ given by
$$
 u_i\left(\dfrac{f}{g}\right) =
    \begin{cases}
        \omega(fx_i^d) - \omega(gx_i^d) & \text{if } fx_i^d \in \SS_i \text{ for some } d \in \ZZ_{>0}\\
       \hspace{0.5in} \infty & \text{if } fx_i^d \notin \SS_i \hspace{0.1in} \forall \hspace{0.1in} d \in \ZZ_{>0}
    \end{cases} 
$$

\vspace{0.05in}

\begin{remark}\label{Rmk6}
The functions $u_i$ and $\dfrac{v_i}{a_i}$ agree on $R$.
\end{remark}

The proof of the remark is as follows:
For $f \in R$, if $fx_i^d \in \SS_i$ for some $d \in \ZZ_{>0}$, then 
$$u_i\left(\dfrac{f}{1}\right) = \omega(fx_i^d) - \omega(x_i^d) = \dfrac{v_i(fx_i^d)}{a_i} - \dfrac{v_i(x_i^d)}{a_i} = \dfrac{v_i(f)}{a_i}.$$
If $fx_i^d \notin \SS_i$ $ \forall $ $ d \in \ZZ_{>0}$, then $u_i\left(\dfrac{f}{1}\right) = \infty$ which gives $v_i(f) = \infty$ (by Remark \ref{Rmk5}) and thus $\dfrac{v_i(f)}{a_i} = \infty$.

It follows from Remark \ref{Rmk6} that $u_i$ satisfies the following properties for every $f, g \in R$.
$$ u_i(fg) = u_i(f) + u_i(g) \hspace{0.1in} \text{ and } \hspace{0.1in} u_i(f+g) \geq \min\{u_i(f), u_i(g)\} $$
Let $P_i$ be the prime ideal $\{x \in R \| u_i(x) = \infty\}$ of $R$. By Remark \ref{Rmk6}, $\{f \in R \| v_i(x) = \infty\} = P_i$ and $u_i$ induces a function on $R/P_i$ which is equal to $\dfrac{v_i}{a_i}$ on $R/P_i$. Thus, $P_i$ is a minimal prime of $R$ and $u_i$ induces a valuation on the quotient field of $R/P_i$ which is equivalent to $v_i$. By abuse of notation, we will denote this valuation by $u_i$. By Remark \ref{Rmk6}, $u_i = \dfrac{v_i}{a_i}$. By Lemma \ref{lem38}, $v_i$ and $a_i$ are uniquely determined by $u_i$. Since the $u_i$ are uniquely determined by the function $\omega$, we have that the $v_i$ and $a_i$ are uniquely determined by the function $\omega$.
\end{proof}

\begin{Corollary} Let $\I=\{I_m\}_{m \in \NN}$ be a discrete valued filtration of a Noetherian ring $R$, where $I_m=I(v_1)_{a_1m}\cap \cdots \cap I(v_s)_{a_sm}$ $ \forall $ $m \in \NN$ is an irredundant representation.  Then the valuations $v_i$ and $a_i\in \RR_{>0}$ are uniquely determined.
\end{Corollary}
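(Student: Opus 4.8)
The plan is to deduce the Corollary from the uniqueness Theorem \ref{Theorem1} by identifying the function $\omega$ of (\ref{eqn5}) with the asymptotic Samuel function via Lemma \ref{lem30}. Concretely, suppose $\I=\{I_m\}_{m \in \NN}$ has two irredundant representations as a discrete valued filtration, say $I_m=\bigcap_{i=1}^s I(v_i)_{a_im}$ and $I_m=\bigcap_{i=1}^r I(v'_i)_{a'_im}$ for all $m\in\NN$, with $a_i,a'_i\in\RR_{>0}$. Since $\overline{\nu}_{\I}$ depends only on $\I$, Lemma \ref{lem30} gives, for every $f\in R\setminus\{0\}$,
$$
\overline{\nu}_{\I}(f)=\min\Bigl\{\tfrac{v_1(f)}{a_1},\ldots,\tfrac{v_s(f)}{a_s}\Bigr\}=\min\Bigl\{\tfrac{v'_1(f)}{a'_1},\ldots,\tfrac{v'_r(f)}{a'_r}\Bigr\},
$$
so both families of valuations/weights present the single function $\omega:=\overline{\nu}_{\I}|_{R\setminus\{0\}}$ in the form (\ref{eqn5}).

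The key step is to check that irredundancy of the representation $I_m=\bigcap_{i=1}^s I(v_i)_{a_im}$ as a filtration forces the hypothesis of Theorem \ref{Theorem1}, i.e. that no $\tfrac{v_i}{a_i}$ can be omitted from the minimum defining $\omega$ (and likewise for the primed representation). I would argue the contrapositive. Suppose that for some index $i_0$ one had $\omega(f)=\min_{j\ne i_0}\tfrac{v_j(f)}{a_j}$ for all $f\in R\setminus\{0\}$. Fix $m\in\NN$ and take any $f\in\bigcap_{j\ne i_0}I(v_j)_{a_jm}$; we may assume $f\ne 0$ since $0$ lies in every valuation ideal. Then $\tfrac{v_j(f)}{a_j}\ge m$ for all $j\ne i_0$, so $\omega(f)=\min_{j\ne i_0}\tfrac{v_j(f)}{a_j}\ge m$, and since $\omega(f)\le\tfrac{v_{i_0}(f)}{a_{i_0}}$ we get $v_{i_0}(f)\ge a_{i_0}m$, i.e. $f\in I(v_{i_0})_{a_{i_0}m}$. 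Hence $\bigcap_{j\ne i_0}I(v_j)_{a_jm}\subseteq I(v_{i_0})_{a_{i_0}m}$, so $I_m=\bigcap_{j\ne i_0}I(v_j)_{a_jm}$ for every $m$, contradicting irredundancy of the representation.

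Having verified its hypothesis, Theorem \ref{Theorem1} applied to $\omega$ shows that the $v_i$ and $a_i$ are uniquely determined by $\omega$ up to reindexing; in particular $r=s$ and, after reindexing, $v_i=v'_i$ and $a_i=a'_i$. As $\omega=\overline{\nu}_{\I}|_{R\setminus\{0\}}$ is intrinsic to $\I$, this is precisely the asserted uniqueness. The substantive work is entirely contained in Theorem \ref{Theorem1}; the only point requiring care here is the translation in the second paragraph between the two notions of irredundancy (for the filtration versus for the minimum expression), which is short but essential for invoking Theorem \ref{Theorem1}.
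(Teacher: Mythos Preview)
Your proposal is correct and follows essentially the same route as the paper: compute $\overline{\nu}_{\I}$ via Lemma \ref{lem30}, verify that irredundancy of the filtration representation translates into irredundancy of the minimum expression (\ref{eqn5}), and then invoke Theorem \ref{Theorem1}. The paper's proof is terser, simply asserting the translation step ``by Lemma \ref{lem30}'' without spelling out the contrapositive argument you give in your second paragraph; your version supplies that detail explicitly, which is a worthwhile addition.
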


\begin{proof} Since $I_m=\left\{f\in R\mid v_i(f) \ge a_i m \mbox{ for } 1 \le i \le s\right\}$ and no $v_i$ can be removed from this expression, by Lemma \ref{lem30}, no $\dfrac{v_i}{a_i}$ can be removed from the expression $\overline{\nu}_{\I}(f) = \min\limits_{1 \leq i \leq s} \bigg\{  \dfrac{v_{i}(f)}{a_i}\bigg\}$. Therefore, from Theorem \ref{Theorem1} we have that $v_i$ and $a_i \in \RR_{>0}$ are uniquely determined.
\end{proof}

\begin{Corollary}\label{CorIrr}
Let $\I = \{I_m\}_{m \in \NN}$ and $\J = \{J_m\}_{m \in \NN}$ be discrete valued filtrations of a Noetherian ring $R$, where $I_m = \bigcap\limits_{i=1}^s I(v_i)_{a_i m}$ and $J_m = \bigcap\limits_{i=1}^r I(v'_{i})_{a'_i m}$ $ \forall $ $m \in \NN$ are irredundant representations. If $\overline{\nu}_{\I} = \overline{\nu}_{\J}$, then $r = s$ and after reindexing, $a_i = a'_i$ and $v_i = v'_i$.
\end{Corollary}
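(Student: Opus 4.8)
The plan is to deduce the Corollary directly from the uniqueness statement of Theorem~\ref{Theorem1}, using Lemma~\ref{lem30} to pass between filtrations and their asymptotic Samuel functions. First I would rewrite the hypothesis $\overline{\nu}_{\I} = \overline{\nu}_{\J}$ in the form treated by Theorem~\ref{Theorem1}: by Lemma~\ref{lem30}, for every $f \in R\setminus\{0\}$,
$$\overline{\nu}_{\I}(f) = \min\left\{\frac{v_1(f)}{a_1},\ldots,\frac{v_s(f)}{a_s}\right\}
\quad\text{and}\quad
\overline{\nu}_{\J}(f) = \min\left\{\frac{v'_1(f)}{a'_1},\ldots,\frac{v'_r(f)}{a'_r}\right\},$$
so the common function $\omega \coloneqq \overline{\nu}_{\I} = \overline{\nu}_{\J}$ is presented as a minimum of the shape (\ref{eqn5}) in two ways.

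Next I would verify that neither presentation of $\omega$ is redundant, i.e.\ that no $\frac{v_i}{a_i}$ (respectively, no $\frac{v'_i}{a'_i}$) can be omitted, which is exactly where the irredundancy of the intersection representations of $I_m$ and $J_m$ enters (this is essentially the content of the Corollary stated just above, but I would spell it out). The elementary identity $\lfloor \min_i x_i\rfloor = \min_i \lfloor x_i\rfloor$, together with the formula $\nu_{\I}(f) = \min_i \lfloor v_i(f)/a_i\rfloor$ from Lemma~\ref{lem30}, shows that if $\frac{v_k}{a_k}$ could be dropped from the expression for $\overline{\nu}_{\I}$, then $\nu_{\I}(f) = \min_{i\neq k}\lfloor v_i(f)/a_i\rfloor$ for all $f$, hence $I_m = \{f : \nu_{\I}(f)\ge m\} = \bigcap_{i\neq k} I(v_i)_{a_i m}$ for all $m$, contradicting irredundancy of the given representation of $\I$; the same argument applies to $\J$.

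Finally, with $\omega$ exhibited as an irredundant minimum of the form (\ref{eqn5}) in two ways, Theorem~\ref{Theorem1} asserts that the valuations and the positive real scalars occurring in such an expression are determined by $\omega$ up to reindexing. Applying this to both expressions yields $r = s$ and, after reindexing, $v_i = v'_i$ and $a_i = a'_i$ for all $i$. I expect no real obstacle here: Theorem~\ref{Theorem1} does the essential work, and the only point deserving care is the transfer of the irredundancy condition from the filtration to its asymptotic Samuel function, which the floor identity above settles.
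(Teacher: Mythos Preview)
Your proposal is correct and follows essentially the same route as the paper: apply Lemma~\ref{lem30} to rewrite $\overline{\nu}_{\I}=\overline{\nu}_{\J}$ as two irredundant minimum expressions of the form~(\ref{eqn5}), then invoke Theorem~\ref{Theorem1}. The paper's own proof is terser and relegates the irredundancy transfer to the immediately preceding (unlabeled) Corollary, whereas you spell it out via the floor identity $\lfloor \min_i x_i\rfloor=\min_i\lfloor x_i\rfloor$; both arguments are valid and amount to the same thing.
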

\begin{proof}
From Lemma \ref{lem30} we have that $\min\limits_{1 \leq i \leq s} \bigg\{  \dfrac{v_{i}(f)}{a_i}\bigg\} =  \min\limits_{1 \leq i \leq r} \bigg\{  \dfrac{v'_{i}(f)}{a'_i}\bigg\}$ $ \forall$ $f \in R$. The Corollary now follows from Theorem \ref{Theorem1}.
\end{proof}

In Definition \ref{uptwist}, we defined the filtration $\mathcal I^{(\alpha)}$, the twist of an arbitrary filtration $\mathcal I$ by $\alpha\in \RR_{>0}$.
Suppose that $\I = \{I_m\}_{m \in \NN}$ is a discrete valued filtration where $I_m=\bigcap\limits_{i=1}^s I(v_i)_{a_im}$ $ \forall $ $ m \in \NN$ and $\a \in \RR_{>0}$. Then we have the explicit description of $\mathcal I^{(\alpha)}$ as
\begin{equation*}
\I^{(\a)}=\{I_m^{(\a)}\}_{m \in \NN} = \{I_{\lceil\a m \rceil}\}_{m \in \NN} \mbox{ where }
I_{\lceil\a m \rceil} = \bigcap\limits_{i=1}^s I(v_i)_{\lceil\a m \rceil a_i} \hspace{0.05in} \forall \hspace{0.05in} m \in \NN.
\end{equation*}
We now define a new filtration
\begin{equation*}
\I^{[\a]}=\{I_m^{[\a]}\}_{m \in \NN} = \{I_{\a m}\}_{m \in \NN} \mbox{ where }
I_{\a m} = \bigcap\limits_{i=1}^s I(v_i)_{\a m  a_i}  \hspace{0.05in} \forall \hspace{0.05in} m \in \NN.
\end{equation*}

Observe that $\I^{(\a)}$ is, in general, not a discrete valued filtration, but $\I^{[\a]}$ is.

The filtration $\mathcal I^{[\alpha]}$ is well defined; that is, it is independent of (possibly redundant) representation $I_m=\bigcap\limits_{i=1}^sI(v_i)_{a_im}$ $ \forall $ $m \in \NN$. To prove this, we first show that
\begin{equation}\label{eqwd1}
I_m=\bigcap\limits_{i=1}^sI(v_i)_{a_im}  \hspace{0.05in} \forall \hspace{0.05in} m \in \NN
\end{equation}
is an irredundant representation of $\mathcal I$  if and only if
\begin{equation}\label{eqwd2}
I_m^{[\alpha]}=\bigcap\limits_{i=1}^sI(v_i)_{\alpha a_im}  \hspace{0.05in} \forall \hspace{0.05in} m \in \NN
\end{equation}
is an irredundant representation of $\mathcal I^{[\alpha]}$. This follows since (\ref{eqwd1}) is irredundant if and only if no $\dfrac{v_i}{a_i}$ can be eliminated from the function
$$
\omega(f)=\min\left\{\frac{v_1(f)}{a_1},\ldots,\frac{v_s(f)}{a_s}\right\}
$$
which holds if and only if no 
$\dfrac{\nu_i}{\alpha a_i}$ can be eliminated from the function
$$
\omega_{\alpha}(f)=\min\left\{\frac{v_1(f)}{\alpha a_1},\ldots,\frac{v_s(f)}{\alpha a_s}\right\}
$$
which is equivalent to (\ref{eqwd2}) being irredundant. Now by Corollary \ref{CorIrr}, the valuations $\nu_i$ and $a_i\in \RR_{>0}$ giving irredundant representations of $\mathcal I$ are uniquely determined and the valuations $\nu_i$ and $a_i\alpha\in \RR_{>0}$ giving irredundant representations of $\mathcal I^{[\alpha]}$ are uniquely determined. Thus the filtration $\mathcal I^{[\alpha]}$ is independent of choice of representation of $\mathcal I$.

\begin{Proposition}\label{cor37} 
Suppose that $\mathcal I$ is a discrete valued filtration of a Noetherian ring $R$ and $\alpha\in \RR_{>0}$. Then $\K(\I^{(\a)}) = \I^{[\a]} = \K(\I^{[\a]})$. 
\end{Proposition}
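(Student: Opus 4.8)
The strategy is to reduce the whole statement to three facts already available: the formula for the asymptotic Samuel function of a discrete valued filtration (Lemma \ref{lem30}), the scaling identity $\overline{\nu}_{\I} = \a\,\overline{\nu}_{\I^{(\a)}}$ (Theorem \ref{thm23}), and the characterization of $\K(-)$ as the unique largest filtration with a prescribed asymptotic Samuel function (Theorem \ref{thm31}), together with Corollary \ref{cor36} for discrete valued filtrations.

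First I would fix a representation $I_m = \bigcap_{i=1}^s I(v_i)_{a_i m}$ and compute $\overline{\nu}_{\I^{(\a)}}$. By Theorem \ref{thm23}, $\overline{\nu}_{\I^{(\a)}}(f) = \frac{1}{\a}\,\overline{\nu}_{\I}(f)$ for all $f \in R$, and combining with Lemma \ref{lem30} gives, for $f \in R \setminus \{0\}$,
$$\overline{\nu}_{\I^{(\a)}}(f) = \frac{1}{\a}\min_{1 \le i \le s}\left\{\frac{v_i(f)}{a_i}\right\} = \min_{1 \le i \le s}\left\{\frac{v_i(f)}{\a a_i}\right\},$$
while $\overline{\nu}_{\I^{(\a)}}(0) = \infty$. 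On the other hand $\I^{[\a]}$ is itself a discrete valued filtration, with representation $I^{[\a]}_m = \bigcap_{i=1}^s I(v_i)_{\a a_i m}$ and $\a a_i \in \RR_{>0}$, so Lemma \ref{lem30} applied to it yields $\overline{\nu}_{\I^{[\a]}}(f) = \min_{1 \le i \le s}\{v_i(f)/(\a a_i)\}$ as well. Hence $\overline{\nu}_{\I^{(\a)}} = \overline{\nu}_{\I^{[\a]}}$ as functions on $R$.

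Given this equality the conclusion is immediate. By Theorem \ref{thm31}, $\K(\I^{(\a)})$ is the unique largest filtration with asymptotic Samuel function $\overline{\nu}_{\I^{(\a)}}$ and $\K(\I^{[\a]})$ the unique largest one with asymptotic Samuel function $\overline{\nu}_{\I^{[\a]}}$; since these two functions coincide, $\K(\I^{(\a)}) = \K(\I^{[\a]})$. Finally, $\I^{[\a]}$ is a discrete valued filtration, so Corollary \ref{cor36} gives $\K(\I^{[\a]}) = \I^{[\a]}$, and chaining the equalities yields $\K(\I^{(\a)}) = \I^{[\a]} = \K(\I^{[\a]})$. (One may also obtain $\K(\I^{(\a)}) = \I^{[\a]}$ directly: by the definition in Theorem \ref{thm31} and the displayed formula, $f \in \K(\I^{(\a)})_m$ iff $v_i(f) \ge \a a_i m$ for every $i$, i.e. iff $f \in I^{[\a]}_m$.)

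I do not expect a genuine obstacle: the proposition is a bookkeeping consequence of the preceding results. The only points needing a little care are handling $f = 0$ separately (all functions equal $\infty$ there), checking that the displayed representation of $\I^{[\a]}$ is a legitimate discrete valued representation so that Lemma \ref{lem30} and Corollary \ref{cor36} apply, and recalling that the well-definedness of $\I^{[\a]}$ established just before the statement is what makes the claim independent of the chosen representation of $\I$.
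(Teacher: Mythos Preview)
Your proposal is correct and uses the same ingredients as the paper's proof (Theorem \ref{thm23}, Lemma \ref{lem30}, Corollary \ref{cor36}, and the definition of $\K$ from Theorem \ref{thm31}). The paper organizes the argument slightly differently---it directly verifies $K(\I^{(\a)})_m = I^{[\a]}_m$ via the chain $\overline{\nu}_{\I^{(\a)}}(x) \ge m \Leftrightarrow \overline{\nu}_{\I}(x) \ge \a m \Leftrightarrow x \in I^{[\a]}_m$---which is exactly the direct route you mention parenthetically at the end.
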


\begin{proof}
Since $\I^{[\a]}$ is a discrete valued filtration of $R$, by Corollary \ref{cor36}, $\I^{[\a]} = \K(\I^{[\a]})$. Now, $\K(\I^{(\a)}) = \{K(\I^{(\a)})_m\}_{m \in \NN}$, where $K(\I^{(\a)})_m = \{x \in R \| \overline{\nu}_{\I^{(\a)}}(x) \geq m\}$. For $x \in R$, $\overline{\nu}_{\I^{(\a)}}(x) \geq m$ if and only if $\overline{\nu}_{\I}(x) \geq \a m$ (by Theorem \ref{thm23}) if and only if $ x \in I_m^{[\a]}$ (by Corollary \ref{cor36}). Thus, $\K(\I^{(\a)}) = \I^{[\a]}$.
\end{proof}

\begin{Theorem}\label{Thmsquare} 
Let $\I=\{I_m\}_{m \in \NN}$ and $\J=\{J_m\}_{m \in \NN}$ be discrete valued filtrations of a Noetherian ring $R$ and $\a\in \RR_{>0}$. Then $\overline{\nu}_{\I} = \a \hspace{0.02in} \overline{\nu}_{\J}$ if and only if $\J=\I^{[\a]}$.
\end{Theorem}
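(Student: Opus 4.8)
The plan is to obtain both implications directly from results already established for discrete valued filtrations, so that essentially the only computation needed is an application of Lemma \ref{lem30}.

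For the ``if'' direction, I would fix a representation $I_m = \bigcap_{i=1}^{s} I(v_i)_{a_i m}$ of $\I$, so that by definition $\J = \I^{[\a]}$ means $J_m = \bigcap_{i=1}^{s} I(v_i)_{\a a_i m}$ for all $m$. Applying Lemma \ref{lem30} to both filtrations, for every nonzero $f \in R$ we get
$$\overline{\nu}_{\J}(f) = \min_{1 \le i \le s} \frac{v_i(f)}{\a a_i} = \frac{1}{\a}\, \min_{1 \le i \le s} \frac{v_i(f)}{a_i} = \frac{1}{\a}\, \overline{\nu}_{\I}(f),$$
and the case $f = 0$ is immediate, so $\overline{\nu}_{\I} = \a\, \overline{\nu}_{\J}$.

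For the ``only if'' direction, assume $\overline{\nu}_{\I} = \a\, \overline{\nu}_{\J}$. By Theorem \ref{NewThm} this is equivalent to $\K(\I^{(\a)}) = \K(\J)$. Now $\J$ is a discrete valued filtration, so $\K(\J) = \J$ by Corollary \ref{cor36}; and $\K(\I^{(\a)}) = \I^{[\a]}$ by Proposition \ref{cor37}. Combining these, $\J = \I^{[\a]}$. (Equivalently, one can avoid citing Theorem \ref{NewThm} and instead use Theorem \ref{thm23} to rewrite $\overline{\nu}_{\I} = \a\, \overline{\nu}_{\I^{(\a)}}$, deduce $\overline{\nu}_{\I^{(\a)}} = \overline{\nu}_{\J}$, and then invoke the uniqueness of the largest filtration with a given asymptotic Samuel function from Theorem \ref{thm31} to conclude $\K(\I^{(\a)}) = \K(\J)$.)

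I do not anticipate a genuine obstacle: the substance has been front-loaded into Proposition \ref{cor37} (which in turn rests on the well-definedness of $\I^{[\a]}$, established via Corollary \ref{CorIrr}) and into Corollary \ref{cor36}. The one point deserving care is to route the ``only if'' argument through the twist $\I^{(\a)}$ rather than through $\I^{[\a]}$ itself, since $\I^{(\a)}$ need not be a discrete valued filtration, and to apply Theorem \ref{thm23} in the correct direction when relating $\overline{\nu}_{\I}$, $\overline{\nu}_{\I^{(\a)}}$, and $\overline{\nu}_{\J}$.
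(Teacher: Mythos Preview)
Your proposal is correct and essentially matches the paper's proof. The paper handles both directions simultaneously as a single chain of equivalences ($\overline{\nu}_{\I} = \a\,\overline{\nu}_{\J} \Leftrightarrow \overline{\nu}_{\I^{(\a)}} = \overline{\nu}_{\J} \Leftrightarrow \K(\I^{(\a)}) = \K(\J) \Leftrightarrow \I^{[\a]} = \J$, via Theorem \ref{thm23}, Theorem \ref{thm31}, Corollary \ref{cor36}, and Proposition \ref{cor37}); your ``only if'' direction is exactly this, and your separate ``if'' computation via Lemma \ref{lem30} is a harmless variant of the same idea.
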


\begin{proof} Theorem \ref{thm23} implies that $\overline{\nu}_{\I} = \a \hspace{0.03in} \overline{\nu}_{\I^{(\a)}}$. Thus
$\overline{\nu}_{\I} = \a \hspace{0.03in} \overline{\nu}_{\J}$ if and only if $\overline{\nu}_{\I^{(\a)}}=\overline{\nu}_{\J}$. This holds if and only if $\K(\I^{(\a)}) = \K(\J)$, by Theorem \ref{thm31}. Since $\J$ is a discrete valued filtration, by Corollary \ref{cor36}, $\K(\J) = \J$ and by Corollary \ref{cor37}, $\K(\I^{(\alpha)}) = \I^{[\alpha]}$.
\end{proof}

\section{The asymptotic Samuel function and multiplicity}

Let $R$ be a $d$-dimensional analytically irreducible (Noetherian) local ring with maximal ideal $m_R$. A filtration $\mathcal I=\{I_n\}_{n \in \NN}$ of $R$ is said to be an $m_R$-filtration if $I_n$ is $m_R$-primary $ \forall $
$n \in \ZZ_{>0}$. The multiplicity of an $m_R$-primary filtration is defined, and exists as a limit  in an analytically unramified local ring, but does not exist as a limit in general if the ring is not generically analytically unramified. This follows from Theorem 1.1 \cite{SDC2}. The multiplicity of an $m_R$-primary filtration is 
$$
e(\mathcal I)=\lim_{n\rightarrow \infty}\frac{\ell_R(R/I_n)}{n^d/d!}
$$
where $\ell_R(N)$ is the length of an $R$-module $N$. Let $K$ be the quotient field of $R$. A valuation $\nu$ of $K$ is an $m_R$-divisorial valuation of $R$ if the valuation ring $\mathcal O_{\nu}$ dominates $R$ (so that $R\subset \mathcal O_{\nu}$ and the maximal ideal $m_{\nu}$ of $\mathcal O_{\nu}$ satisfies $m_{\nu}\cap R=m_R$) and $\mathcal O_{\nu}$ is essentially of finite type over $R$. Since $\mathcal O_{\nu}$ is a Noetherian ring, a divisorial valuation is a discrete (rank 1) valuation. More about divisorial valuations can be found in Section 9.3 \cite{HS}. For $I$ an ideal in $R$, $\nu(I)=\min\{\nu(x)\mid x\in I\}$. Let $\mathcal I=\{I_n\}_{n \in \NN}$ be an $m_R$-primary filtration and let $\nu$ be an $m_R$-divisorial valuation of $R$. Then define
$$
\nu(\mathcal I)=\inf\frac{\nu(I_n)}{n}=\lim_{n\rightarrow\infty}\frac{\nu(I_n)}{n}.
$$
The existence of this limit is shown for instance in Proposition 2.3 \cite{JM}. In \cite{BLQ}, the saturation $\tilde{\mathcal I}$ of $\mathcal I$ is defined to be the $m_R$-filtration $\tilde{\mathcal I}=\{\tilde I_n\}_{n \in \NN}$ where 
$$
\tilde I_n=\{f\in m_R\mid \nu(f)\ge n\nu(\mathcal I)\mbox{ for all $m_R$-divisorial valuations of $R$}\}.
$$
It is shown in \cite{BLQ}, that in an analytically irreducible local ring $(R,m_R)$, for any filtration $\mathcal I$ of $m_R$-primary ideals, $\tilde{\mathcal I}$  is the unique largest filtration $\mathcal J$ containing $\mathcal I$ such that $\mathcal I$ and $\mathcal J$ have the same multiplicity. Specifically, they show in Theorem 4.1 \cite{BLQ} that in an analytically irreducible local ring $R$, $m_R$-filtrations $\mathcal I\subseteq \mathcal J$ satisfy $e(\mathcal I)=e(\mathcal J)$ if and only if $\tilde{\mathcal I}=\tilde{\mathcal J}$.  

\begin{Theorem}\label{Mult} Suppose that $R$ is an analytically irreducible local ring with maximal ideal $m_R$ and $\mathcal I$ and $\mathcal J$ are $m_R$-filtrations such that $\overline{\nu}_{\mathcal I}=\overline{\nu}_{\mathcal J}$. Then $e(\mathcal I)=e(\mathcal J)$.
\end{Theorem}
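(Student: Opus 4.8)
The plan is to deduce the result from the theory of saturations of $m_R$-filtrations in \cite{BLQ}, by showing that the hypothesis $\overline{\nu}_{\I}=\overline{\nu}_{\J}$ forces the saturations $\tilde{\I}$ and $\tilde{\J}$ to coincide. First, by Theorem \ref{thm31}, $\overline{\nu}_{\I}=\overline{\nu}_{\J}$ gives $\K(\I)=\K(\J)$, and $\K(\I)$ is again an $m_R$-filtration: for $n\ge 1$ the ideal $K(\I)_n$ contains the $m_R$-primary ideal $I_n$, and is proper because any unit $u$ has $\overline{\nu}_{\I}(u)=0<n$; thus $\nu(\K(\I))$ is defined in the usual way for $m_R$-divisorial valuations $\nu$. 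Since the saturation is, directly from its definition, determined by the real numbers $\nu(\I)$ as $\nu$ ranges over the $m_R$-divisorial valuations of $R$, the whole theorem reduces to the equality
$$\nu(\I)=\nu(\K(\I))\qquad\text{for every }m_R\text{-divisorial valuation }\nu\text{ of }R.$$
Granting this, $\nu(\I)=\nu(\K(\I))=\nu(\K(\J))=\nu(\J)$ for all such $\nu$, hence $\tilde{\I}=\tilde{\J}$; and since $e(\I)=e(\tilde{\I})$ and $e(\J)=e(\tilde{\J})$ by \cite{BLQ} (as recalled above the statement), we conclude $e(\I)=e(\J)$.

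It remains to prove the displayed equality. The inequality $\nu(\K(\I))\le\nu(\I)$ is immediate from $\I\subseteq\K(\I)$, which gives $\nu(K(\I)_n)\le\nu(I_n)$ for every $n$ and hence the inequality after dividing by $n$ and passing to the limit. For the reverse inequality we may assume $\nu(\I)>0$, since otherwise there is nothing to prove. Let $f\in K(\I)_n$ with $n\ge 1$, so $\overline{\nu}_{\I}(f)\ge n$, and fix $\epsilon\in(0,n)$. Then for all sufficiently large $k$ we have $m_k:=\nu_{\I}(f^k)\ge k(n-\epsilon)\ge 1$, and since $f^k\in I_{m_k}$,
$$k\,\nu(f)=\nu(f^k)\ \ge\ \nu(I_{m_k})\ \ge\ m_k\,\nu(\I)\ \ge\ k(n-\epsilon)\,\nu(\I),$$
where the middle inequality uses that $\nu(\I)=\inf_j \nu(I_j)/j$ (recalled before the statement). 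Dividing by $k$ gives $\nu(f)\ge (n-\epsilon)\,\nu(\I)$, and letting $\epsilon\to 0$ gives $\nu(f)\ge n\,\nu(\I)$. As $f\in K(\I)_n$ was arbitrary, $\nu(K(\I)_n)\ge n\,\nu(\I)$; dividing by $n$ and letting $n\to\infty$ yields $\nu(\K(\I))\ge\nu(\I)$.

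The bookkeeping points — that $\K(\I)$ is an $m_R$-filtration and that the saturation depends only on the function $\nu\mapsto\nu(\I)$ — are immediate from the relevant definitions. The one genuinely non-formal step, and hence the main obstacle, is the chain of inequalities $k\,\nu(f)=\nu(f^k)\ge\nu(I_{m_k})\ge m_k\,\nu(\I)$ underlying $\nu(\K(\I))\ge\nu(\I)$: it is precisely what allows an arbitrary $m_R$-divisorial valuation to ``see'' the asymptotic Samuel function of the filtration, and it depends on passing to powers of $f$ together with the interpretation $\nu(\I)=\inf_j\nu(I_j)/j$. Once this estimate is established, the remainder of the argument is formal.
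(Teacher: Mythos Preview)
Your argument is correct and follows essentially the same route as the paper: the heart of both proofs is the inequality $\nu(f)\ge n\,\nu(\I)$ for $f\in K(\I)_n$, obtained by applying the divisorial valuation to $f^k\in I_{\nu_{\I}(f^k)}$ and using $\nu(\I)=\inf_j\nu(I_j)/j$. The paper reads this inequality directly as the inclusion $\K(\I)\subseteq\tilde{\I}$ and then sandwiches $e(\K(\I))$ between $e(\I)$ and $e(\tilde{\I})$, whereas you take the small extra step of passing to $\nu(\K(\I))\ge\nu(\I)$ before concluding $\tilde{\I}=\tilde{\J}$; the difference is purely cosmetic.
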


\begin{proof} We have that  $\mathcal I,\mathcal J\subseteq \mathcal K(\mathcal I)$ by Theorem \ref{thm31}. Thus it suffices to show that $e(\mathcal K(\mathcal I))=e(\mathcal I)$. 

Write $\mathcal K(\mathcal I)=\{K(\mathcal I)_n\}_{n \in \NN}$ and $\tilde{\I} = \{\tilde{I}_n\}_{n \in \NN}$.
Suppose that $f\in K(\mathcal I)_l$ for some $l \in \ZZ_{>0}$ and $m,n\in \ZZ_{>0}$ are such that $f^n\in I_m$. Let $\nu$ be an $m_R$-divisorial valuation of $R$. Then $\nu(f^n)\ge \nu(I_m)\ge m\nu(\mathcal I)$, so $\nu(f^n)\ge \nu_{\mathcal I}(f^n)\nu(\mathcal I)$, since for $g\in R$, $\nu_{\mathcal I}(g)=\sup\{m\mid g\in I_m\}$. Thus $\nu(f)\ge \dfrac{\nu_{\mathcal I}(f^n)}{n}\nu(\mathcal I)$ which implies
$$
\nu(f)\ge \overline{\nu}_{\mathcal I}(f)\nu(I)\ge l\nu(\mathcal I).
$$
Thus $f\in\tilde{I}_l$. We conclude that $\mathcal I\subseteq \mathcal K(\mathcal I)\subseteq\tilde{\mathcal I}$ and so $e(\mathcal K(\mathcal I))=e(\tilde{\mathcal I})$ by Theorem 4.1 \cite{BLQ}.
\end{proof}

In Theorem \ref{Mult}, we showed that if $\mathcal I$ is an $m_R$-primary filtration in an analytically irreducible local ring $R$, then $\mathcal K(\mathcal I)\subseteq \tilde{\mathcal I}$. The following example shows that the saturation $\tilde{\mathcal I}$ can be much larger that $\mathcal K(\mathcal I)$.

\begin{Example}\label{MultEx} An example showing that we can have that $\mathcal K(\mathcal I)$ is a proper subset of $\tilde{\mathcal I}$.
\end{Example}

\begin{proof} Let $k$ be a field and $R=k[[x,y]]$, a power series ring in two variables. Let $f=y-x-x^2-x^3-\cdots\in R$. Let $I_n=(x^n,f)$ for $n\in \ZZ_{>0}$. Then, setting $I_0=R$, we have that $\mathcal I=\{I_n\}_{n \in \NN}$ is an $m_R$-primary filtration.
We have that $y^m=(f+(x+x^2+\cdots))^m=fh+(x+x^2+\cdots)^m$ for some $h\in R$. Thus 
$y^m\in I_n$ if and only if $m\ge n$. We then have that 
$$
\nu_{\mathcal I}(y^m)=\max\{n\mid y^m\in I_n\}=m
$$
 for all $m$ and
 $$
 \overline{\nu}_{\mathcal I}(y^m)=\lim_{n\rightarrow\infty}\frac{\nu_{\mathcal I}(y^{nm})}{n}=m.
 $$
 Thus each $\mathcal K(\mathcal I)_m$ properly contains $\mathcal K(\mathcal I)_{m+1}$, since 
 $y^m\in \mathcal K(\mathcal I)_m\setminus \mathcal K(\mathcal I)_{m+1}$. 
 
 Let $\nu$ be an $m_R$-divisorial valuation. Then $\nu(I_n)\le \nu(f)$, so $\nu(\mathcal I)=0$. Thus $\tilde{I}_n=m_R$ for all $n>0$ where $\tilde{\I} = \{\tilde{I}_n\}_{n \in \NN}$.

\end{proof}

In the special case that $\mathcal I$ is a discrete valued filtration, we have that $\tilde{\mathcal I}=\mathcal I$.

\end{document}